\newtheorem{theorem}{Theorem}[section]
\newtheorem{definition}[theorem]{Definition}
\newtheorem{lemma}[theorem]{Lemma}
\newtheorem{proposition}[theorem]{Proposition}
\newtheorem{corollary}[theorem]{Corollary}
\newtheorem{remark}[theorem]{Remark}
\newcommand{\N}{{\mathbb N}}
\newcommand{\Z}{{\mathbb Z}}
\newcommand{\F}{{\mathbb F}}
\newcommand{\E}{{\mathbb E}}
\newcommand{\C}{{\mathbb C}}
\def\cal{\mathcal}
\newcommand{\call}{{\cal L}}
\newcommand{\cale}{{\cal E}}
\newcommand{\calk}{{\cal K}}
\newcommand{\calf}{{\cal F}}
\newcommand{\calm}{{\cal M}}
\newcommand{\calh}{{\cal H}}
\newcommand{\calg}{{\cal G}}
\newcommand{\End}{ {\rm End} }
\newcommand{\Endo}{ {\rm End} }
\newcommand{\range}{{\mbox{range}}}
\begin{document}

\title[Descent homomorphism]{A descent homomorphism for semimultiplicative sets}
\author[B. Burgstaller]{Bernhard Burgstaller}
\address{Doppler Institute for mathematical physics, Trojanova 13, 12000 Prague, Czech Republic}
\email{bernhardburgstaller@yahoo.de}
\subjclass{19K35, 20N02, 46L55}
%
\thanks{The author was supported by Czech MEYS Grant LC06002.}

\begin{abstract}
We define and provide some basic analysis of various types of crossed products by semimultiplicative sets,
and then prove a $KK$-theoretical descent homomorphisms for semimultiplicative sets
in accord with the descent homomorphism for discrete groups.
%
%
%
\end{abstract}

\maketitle

\tableofcontents

\section{Introduction}

An associative semimultiplicative set is a set $G$ together with a partially defined associative multiplication.
For instance, categories, groupoids, semigroups, inverse semigroups and groups are associative semimultiplicative sets.
An equivariant $KK$-theory for semimultiplicative sets is defined in \cite{burgiSemimultiKK}, and in this theory the $G$-action is realized by linear
(non-adjointable) partial
isometries on $C^*$-algebras and Hilbert modules.
In this paper we prove a descent homomorphism for $KK^G$ and various types of crossed products,
$$KK^{H \times G} (A,B) \longrightarrow KK^H(A \rtimes G, B \rtimes G),$$
see Theorem \ref{theoremMain}, parallel to Kasparov's descent homomorphism for groups (\cite{kasparov1988}).
We consider four types of crossed products, the reduced one, the full one, the full strong one, and another one for so-called inversely
generated semigroups.


This work originated in an attempt
to generalise the Baum--Connes map for discrete groups (\cite{baumconneshigson1994}) to discrete semimultiplicative sets.
If $G$ is an inverse semigroup then this seems conceptually (and at least partially) to work, see \cite{burgiKKrDiscrete} and \cite{burgiGreenJulg}.
If $G$ is not an inverse semigroup then still certain reduced crossed products $A \rtimes_r G$ are isomorphic to inverse semigroup crossed products $A \rtimes S$, see Corollary \ref{corollaryIsomorphRedInvSemiCrossed}, and so for these crossed products one has potentitally a Baum--Connes theory.

In the full crossed product of a semimultiplicative set, however, one usually has non-commuting source and range projections of the underlying partial isometries, and this turns out to be an obstacle in constructing a Baum--Connes map similarly as for groups and groupoids:
these Baum--Connes maps can be constructed by a combination of a
descent homomorphism and an averaging map. Avaraging, however, fails for semimultiplicative sets
and their induced non-commuting projections on modules. (But even for inverse semigroups one cannot directly avarage but need to slice
modules at first (see \cite{burgiGreenJulg})).

Roughly speaking, the theory of crossed products by semimultiplicative sets is a theory of $C^*$-algebras
generated by partial isometries. Hence we generalise this point of view by considering also inversely generated semigroups,
which are $*$-semigroups that are generated by their invertible elements.

We give a brief overview of this paper. In Sections \ref{sectionSemimultiplicativeSets}-\ref{sectionPrelim}
we recall the basic definitions of equivariant $KK$-theory for semimultiplicative sets from \cite{burgiSemimultiKK}.
In Section \ref{sectionPartIso} we prove some facts about partial isometries in connection with $G$-actions.
Sections \ref{sectionActionsCrossed}-\ref{sectionReprL1} and Section \ref{sectionInvGenSemigroups}
are dedicated to the definition of the various crossed products; Section \ref{sectionInvGenSemigroups} also includes
the definition of equivariant $KK$-theory for inversely generated semigroups.
In Section \ref{sectionCompareWithGroupsKK} we compare semimultiplicative set $G$-equivariant $KK$-theory 
with Kasparov's $G$-equivariant $KK$-theory when $G$ is a group.
Sections \ref{sectionDescent}-\ref{sectionDescent3} occupy the proof of the descent homomorphism,
which is an adaption of Kasparov's proof in \cite{kasparov1988}.

\section{Semimultiplicative sets}    \label{sectionSemimultiplicativeSets}

\begin{definition}
{\rm A (general) {\em semimultiplicative set} $G$ is a set endowed
with a subset $G^{(2)} \subseteq G \times G$
and a map (written as a multiplication)
$$G^{(2)} \longrightarrow G: (s,t) \mapsto s t$$
satisfying the following weak associativity condition: $s(tu) = (st)u$ whenever both expressions are defined ($s,t,u \in G$).
}
\end{definition}

\begin{definition}
{\rm A semimultiplicative set $G$ is called {\em associative} if 
whenever
$(st)u$ or $s(tu)$ is defined, then both $(st)u$ and $s(tu)$ are defined
($s,t,u \in G$).}
\end{definition}


There is a similar notion called a semigroupoid (\cite{exelSemigroupoid}). A semigroupoid is an associative semimultiplicative set
with the property that $(s t) u$ is defined if and only if $st$ and $t u$ is defined. For instance, groupoids and small categories are semigroupoids. 
In general, however, an associative semimultiplicative set is not a semigroupoid, a
typical example being a ring $R$ without the zero element, so the semimultiplicative set $G = R \backslash \{0\}$ under the multiplication inherited from $R$. 
Examples for associative semimultiplicative sets include
groups, groupoids, small categories, inverse semigroups, semigroups, semigroupoids.
An associative semimultiplicative set is also called a partial semigroup in the literature
(see \cite{0809.04005}).


We remark that the weak associativity condition for a general semimultiplicative set is not essential in this paper. A general semimultiplicative set
is always realized by associative actions, so we require the weak associativity without essential loss of generality.
However, for instance, an arbitrary subset of a group is a general but not necessarily an associative semimultiplicative set.
Now the point is that general and associative semimultiplicative sets $G$ yield different classes of actions, since $G$ has to be realized by partial isometries.

If an associative semimultiplicative set $G$ has left cancellation, that is, for all $s,t_1,t_2 \in G$, $s t_1 = s t_2$ implies $t_1 = t_2$,
then we are able to define a left reduced $C^*$-algebra for $G$.
Write $(e_g)_{g \in G}$ for the canonical base in $\ell^2(G)$.

\begin{definition} \label{defLeftRegRep}
{\rm
Let $G$ be an associative semimultiplicative set with left cancellation.
The {\em left regular representation} of $G$
is the map $\lambda: G \longrightarrow B(\ell^2(G))$ given by
$$\lambda_g \Big (\sum_{h \in G} \alpha_h e_h \Big ) =
\sum_{h \in G, \, gh \mbox{ is defined}} \alpha_h e_{gh},$$
where $\alpha_h \in \C$.
The $C^*$-subalgebra of $B(\ell^2(G))$ generated by $\lambda(G)$ is called
the {\em reduced $C^*$-algebra} of $G$ and denoted by $C^*_r(G)$.
}
\end{definition}

\begin{definition}
{\rm
A {\em morphism} $\phi: G \longrightarrow H$ between two semimultiplicative sets $G$ and $H$
is a map satisfying $\phi(gh) = \phi(g) \phi(h)$ whenever $g h$ is defined ($g,h \in G$).
}
\end{definition}

\begin{definition}
{\rm
An {\em anti-morphism} $\varphi: G \longrightarrow H$ between semimultiplicative sets $G$ and $H$ is a map satisfying
$\varphi(g h)= \varphi(h) \varphi(g)$ whenever $gh$ is defined ($g,h \in G$).
}
\end{definition}

\begin{definition}
{\rm
A {\em left action} of a semimultiplicative set $G$
on a set $X$
is given by a subset
$Y \subseteq G \times X$
and a map
$$Y \longrightarrow X, \, (g,x) \mapsto g x$$
such that if $gh$ is defined, then
$(g h) x$ is defined if and only if
$g(h x)$ is defined, and in this case $(g h) x = g (h x)$
($g,h \in G, x \in X$).
}
\end{definition}

By the last definition we see that a $G$-action on a set is a morphism $\phi:G \longrightarrow {\rm PartFunc}(X)$ from $G$ into the set of partial functions on $X$.
(That is, if $g h$ is defined, then $\phi(gh)=\phi(g) \circ \phi(h)$ and the domain of both sides coincide.)
The domain of the composition of two partial functions is understood to be the maximal possible one.
The identity $\phi_1 = \phi_2$ of partial functions is understood to imply that both sides of the identity must have the same domain.

\begin{definition}
{\rm
A left $G$-action $\phi$ on $X$ is called {\em injective}
if the maps $\phi(g) \in {\rm PartFunc}(X)$ are injective on their domain for all $g \in G$.
}
\end{definition}

A {\em linear action} of $G$ on a vector space $X$ is a morphism $\phi: G \longrightarrow {\rm LinMap}(X)$ from $G$ into the linear maps on $X$.
The map $\lambda$ of Definition \ref{defLeftRegRep} may be checked to be a linear action on $\ell^2(G)$.
Left $G$-actions correspond to morphisms, and right $G$-actions to anti-morphisms.
That is, a right linear action on a vector space $X$ is an anti-morphism $\varphi: G \longrightarrow {\rm LinMap}(X)$.


\begin{definition} \label{Gaction_onHausdorffSet}
{\em
An injective left $G$-action $\phi$ on a Hausdorff space $X$ is {\em continuous} if
all maps $\phi(g) \in {\rm PartFunc}(X)$ are continuous and have clopen domains and ranges
for all $g \in G$.
}
\end{definition}

\section{$G$-Hilbert $C^*$-algebras and -modules}

\label{sectionPrelim}

In this section we recall the basic definitions for $G$-equivariant $KK$-theory for a general semimultiplicative set $G$ (\cite{burgiSemimultiKK}). 
All $C^*$-algebras and Hilbert modules are assumed to be
$\Z_2$-graded \cite{kasparov1981,kasparov1988}. If $\varepsilon$ is
a grading on a linear space $X$, then $\varepsilon(T)= \varepsilon T
\varepsilon$ is a grading on the space of linear maps $T$ on $X$.
All $*$-homomorphisms between $C^*$-algebras are supposed to respect
the grading.
We let $[x,y] = xy - (-1)^{\partial x \partial y } yx$ be the graded
commutator.

At first we shall define an action by a general semimultiplicative set $G$ on a $C^*$-algebra.
This is the next definition (from \cite{burgiSemimultiKK}, Definition 11, Definition 12, Definition 20, and the remark thereafter).

\begin{definition}    \label{definitionHilbertAlgebra}
{\rm
A {\em $G$-Hilbert $C^*$-algebra} $A$
is a $(\Z/2)$-graded $C^*$-algebra $A$ which is also regarded as a Hilbert module over itself
under the inner product $\langle x,y\rangle = x^* y$, and which is equipped with a
semimultiplicative set morphism
$$\alpha: G \longrightarrow \End(A)$$
and a semimultiplicative set anti-morphism
$$\alpha^*: G \longrightarrow \End(A)$$
such that $\alpha_g$ and $\alpha_g^*$ are zero-graded for all $g \in G$,
\begin{eqnarray*}
\alpha_g  &=& \alpha_g \alpha_g^* \alpha_g,\\
\alpha_g^*  &=& \alpha_g^* \alpha_g \alpha_g^*,
\end{eqnarray*}
and $\alpha_g^* \alpha_g$ and $\alpha_g \alpha_g^*$ are self-adjoint for all $g \in G$, and
\begin{eqnarray*}
\langle \alpha_g(x),y\rangle &=& \alpha_g (\langle x, \alpha_g^*(y) \rangle ), \\
\langle \alpha_g^*(x),y\rangle &=& \alpha_g^* (\langle x, \alpha_g(y)\rangle)
\end{eqnarray*}
holds for all $x,y \in A$ and all $g \in G$.
}
\end{definition}

We usually write simply $g(x)$ rather than $\alpha_g(x)$, and $g^*(x)$
rather than $\alpha_g^*(x)$.
Instead of $G$-Hilbert $C^*$-algebra we often say just Hilbert $C^*$-algebra if $G$ is clear from the context or unimportant.

\begin{definition}    \label{defGequiHilbertCstar}
{\rm
A {\em $G$-equivariant homomorphism} $\tau:A \rightarrow B$ between two Hilbert
$C^*$-algebras $A$ and $B$ is a $*$-homomorphism intertwining both the left and and the right $G$-action,
i.e.
$\tau(g(x)) = g(\tau(x))$ and 
$\tau(g^*(x)) = g^*(\tau(x))$
for all $x \in A$ and $g \in G$.
}
\end{definition}

\begin{definition}    \label{definitionGHilbertModule}
{\rm
A $G$-Hilbert module $\cale$ is a $(\Z/2)$-graded Hilbert module $\cale$ over a Hilbert $C^*$-algebra
$B$, such that $\cale$ is equipped with a semimultiplicative set
morphism
$$U: G \longrightarrow {\rm LinMap}(\cale)$$
and a semimultiplicative set anti-morphism
$$U^*: G \longrightarrow {\rm LinMap}(\cale)$$
such that $U_g$ and $U_g^*$ are zero-graded for all $g \in G$,
\begin{eqnarray*}
U_g  &=& U_g U_g^* U_g,\\
U_g^*  &=& U_g^* U_g U_g^*,
\end{eqnarray*}
and $U_g^* U_g$ and $U_g U_g^*$ are self-adjoint for all $g \in G$, and
\begin{eqnarray}
U_g(\xi b) &=& U_g(\xi) g(b),   \label{hilbmodrel1}\\
U_g^*(\xi b) &=& U_g^*(\xi) g^*(b),\\
\langle U_g(\xi),\eta \rangle &=& g ( \langle \xi, U_g^*(\eta) \rangle),  \label{hilbmodrel3} \\
\langle U_g^*(\xi), \eta\rangle &=& g^* (\langle \xi, U_g(\eta) \rangle)  \label{hilbmodrel4}
\end{eqnarray}
holds for all $\xi,\eta \in \cale, b \in B$ and $g \in G$.
}
\end{definition}

\begin{definition}  \label{definitionGstarEquivariance}
{\rm
Let $A$ and $B$ be $G$-Hilbert $C^*$-algebras and $\cale$ a $G$-Hilbert module over $B$.
A $*$-homomorphism
$\pi:A \longrightarrow \call(\cale)$
is called {\em $G$-equivariant}
if
\begin{eqnarray}
[U_g U_g^* ,\pi(a)] &=& 0,  \label{equivariantRep1}\\
{[U_g^* U_g ,\pi(a)] } &=& 0,  \label{equivariantRep2}\\
U_g \pi(a) U_g^* &=& \pi( g(a)) U_g U_g^*,  \label{equivariantRep3}\\
U_g^* \pi(a) U_g &=& \pi( g^*(a)) U_g^* U_g  \label{equivariantRep4}
\end{eqnarray}
for all $a \in A$ and $g \in G$.
}
\end{definition}

\begin{definition}
{\rm Let $A$ and $B$ be $G$-Hilbert $C^*$-algebras. A {\em
$G$-Hilbert $(A,B)$-bimodule} $\cale$ is a $G$-Hilbert $B$-module
$\cale$ together with a $G$-equivariant $*$-homomorphism $\pi:A
\longrightarrow \call(\cale)$. The homomorphism $\pi$ is often regarded as a left module multiplication of $A$ on $\cale$.}
\end{definition}

We also write $g(T)=U_g T U_g^*$ and $g^*(T) =
U_g^* T U_g$ for $g \in G$ and adjoint-able operators $T \in \call(\cale)$. Note that in general $\call(\cale)$ is not
a $G$-Hilbert $C^*$-algebra, as usually the action $g(\cdot)$ is not multiplicative, i.e. $g(TS) \neq g(T)
g(S)$. The {\em trivial} $G$-action on an object $X$
of a category is the action $\tau_g(x) = x$ for all $x \in X$ and  $g \in G$.

For a subset $C \subseteq \call(\cale)$ we set
\begin{eqnarray*}
Q_C &=& \{ T \in \call(\cale)|\, [T,c] \in \calk(\cale), \,\forall c \in C\},\\
I_C &=& \{ T \in \call(\cale)|\, c T \mbox{ and } T c \mbox{ are in } \calk(\cale),\, \forall c \in C\}.
\end{eqnarray*}
Here, $\calk(\cale)$ denotes the set of compact operators in the sense of
Kasparov (\cite{kasparov1988}).


\begin{definition}
{\rm
Let $A,B$ be $G$-Hilbert $C^*$-algebras.
Cycles in $\E^G(A,B)$ are Kasparov's cycles $(\pi,\cale,T)$ in $\E(A,B)$ (\cite{kasparov1988}) with the following addition:
$\cale$ is a $G$-Hilbert module
(Definition \ref{definitionGHilbertModule}) and $\pi: A \to \call(\cale)$ is a $G$-equivariant (Definition \ref{definitionGstarEquivariance}),
and the elements
\begin{equation}   \label{defCycleCondition}
g(T) - g (1)T, \,[g(1),T],\, [g^*(1),T]
\end{equation}
are in $I_A(\cale)$.
Parallel to Kasparov's theory, $KK^G(A,B)$ is defined to be
$\E^G(A,B)$ divided by homotopy induced by $\E^G(A,B[0,1])$.
}
\end{definition}

$KK^G(A,B)$ is functorial in $A$ and $B$ and allows an associative Kasparov product (\cite{burgiSemimultiKK}).

We recall that we have a diagonal $G$-action on tensor
products, see \cite[Lemmas 4 and 5]{burgiSemimultiKK}.
If $\cale_1$ and $\cale_2$ are $G$-Hilbert modules then $\cale_1 \otimes \cale_2$
is a $G$-Hilbert module, and $\cale_1 \otimes_{B_1} \cale_2$ is a $G$-Hilbert module
if $B_1 \longrightarrow \call(\cale_1)$ is a $G$-equivariant representation
(Definition \ref{definitionGstarEquivariance}),
both under the diagonal action $U^{(1)} \otimes U^{(2)}$.


\section{Partial isometries}

\label{sectionPartIso}



In this section we shall show that an action of a semimultiplicative set on a Hilbert module is realized by partial isometries
(Corollary \ref{GactionConsistentPartialIso}), where inverse elements go over to adjoint partial isometries
(Corollary \ref{corollaryHilbertactionInverses}).

A {\em projection} on a Hilbert module $\cale$ is a self-adjoint
idempotent map $P$ on $\cale$. Recall that the identity $P(\cale) =
\calh$ links
complemented subspaces $\calh$ of $\cale$ with projections $P$ on
$\cale$ in a bijective way.

\begin{definition} \label{defintionPartialisometry}
{\rm A {\em partial isometry} $T$ on a Hilbert-module $\cale$ is a
linear map $T: \cale \rightarrow \cale$ for which there exist two
complemented subspaces $\calh_0$ and $\calh_1$ in $\cale$ such that
$T$ maps $\calh_0$ norm-isometrically onto $\calh_1$ and vanishes on
$\calh_0^{\bot}$. }
\end{definition}

Notice that we do not require that a partial isometry $T$ is adjoint-able.
(For instance, in Lance's book \cite{lance}, partial isometries are supposed to be adjoint-able.)
The projections $Q$ and $P$ of a partial isometry $T$
as in Definition \ref{defintionPartialisometry}
projecting onto $\calh_0$ and $\calh_1$,
respectively, are called the {\em source} and {\em
range projections} of $T$. Since $\calh_0^{\bot} = \ker(T)$ and
$\calh_1 = \range(T)$, $Q$ and $P$ are
uniquely determined by $T$. The {\em inverse partial isometry} $S$ of  $T$,
also denoted by $S=T^*$, is the unique partial isometry $S$ on
$\cale$
which vanishes on $\calh_1^\bot$ and satisfies $S|_{\calh_1} =
(T|_{\calh_0})^{-1}$. If $T$ happens to be adjoint-able then the
notation $T^*$ cannot cause confusion as in this case the inverse
partial isometry is the adjoint of $T$,
see \cite{lance}.
The set of partial isometries of $\cale$ is denoted by ${\rm PartIso}(\cale)$.

\begin{lemma} \label{characterizationlemmaPartIso}
$T$ is a partial isometry if and only if $T$ is a norm contractive linear map
and there exists a norm contractive linear map
$S:\cale \rightarrow \cale$
such that $S T$ and $T S$
are projections, $T = T S T$ and $S = S T S$.
In this case $S = T^*$.
\end{lemma}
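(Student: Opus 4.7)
The plan is to prove both directions of the equivalence, using in both cases the two identities $T=TST$ and $S=STS$ as the algebraic skeleton.

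For the forward direction, I will suppose $T$ is a partial isometry with source projection $Q$ onto $\calh_0$ and range projection $P$ onto $\calh_1$, and simply take $S$ to be the inverse partial isometry already introduced after Definition~\ref{defintionPartialisometry}. Norm contractivity of both $T$ and $S$ is clear since each is norm-isometric on a complemented subspace and zero on its complement. A direct computation on elements of $\calh_0$ and $\calh_0^\perp$ (resp.\ $\calh_1$ and $\calh_1^\perp$) gives $ST=Q$ and $TS=P$, from which $TST = TQ = T$ and $STS = SP = S$ follow immediately.

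For the converse, assume $T,S$ are norm contractive with $Q:=ST$ and $P:=TS$ projections and $T=TST$, $S=STS$. I set $\calh_0 := Q\cale$ and $\calh_1 := P\cale$, which are complemented. Then I verify in order:
(i) $T$ sends $\calh_0$ into $\calh_1$, because for $\xi = Q\xi \in \calh_0$ we have $PT\xi = TST\xi = T\xi$; conversely any $\eta = P\eta \in \calh_1$ equals $T(S\eta)$ with $S\eta \in \calh_0$ (since $QS\eta = STS\eta = S\eta$), so $T(\calh_0)=\calh_1$;
(ii) $T$ is norm-isometric on $\calh_0$, because for $\xi \in \calh_0$ the chain $\|\xi\| = \|ST\xi\| \le \|T\xi\| \le \|\xi\|$ forces equality, using contractivity of $S$ and $T$;
(iii) $T$ vanishes on $\calh_0^\perp = \ker Q$, because $Q\xi = 0$ gives $T\xi = TST\xi = T(Q\xi) = 0$.
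These three facts together say precisely that $T$ is a partial isometry in the sense of Definition~\ref{defintionPartialisometry} with source projection $Q$ and range projection $P$.

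Finally, to conclude $S = T^*$, I verify that $S$ satisfies the two defining properties of the inverse partial isometry: $S$ vanishes on $\calh_1^\perp = \ker P$, since $P\eta = 0$ implies $S\eta = STS\eta = S(P\eta) = 0$; and for $\eta = T\xi \in \calh_1$ with $\xi \in \calh_0$ we get $S\eta = ST\xi = Q\xi = \xi$, so $S|_{\calh_1} = (T|_{\calh_0})^{-1}$. Uniqueness from the paragraph after Definition~\ref{defintionPartialisometry} then identifies $S$ with $T^*$. The only mildly delicate step is (ii), where norm isometry is extracted purely from the two contractivity bounds squeezing the equality $\xi = ST\xi$; everything else is a matter of manipulating the two relations $T=TST$ and $S=STS$ together with the defining properties of the projections $Q$ and $P$.
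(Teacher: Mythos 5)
Your proof is correct and follows essentially the same route as the paper: the converse direction rests on the same squeeze $\|\xi\|=\|ST\xi\|\le\|T\xi\|\le\|\xi\|$ from the two contractivity bounds, with $ST$ and $TS$ as source and range projections. You simply spell out in full the verifications (including the easy forward direction and the identification $S=T^*$) that the paper's one-line proof leaves implicit.
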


\begin{proof}
Since $S$ and $T$ are contractive, we have $\| T x \| = \|TST x\| \le \|STx\| \le \| T x \|$
and $\|S y \| = \| T S y\|$ for all $x,y \in \cale$.
Thus $T$ is a partial isometry with source and range projections
$ST$ and $TS$, respectively, and $S=T^*$.
\end{proof}

\begin{corollary} \label{GactionConsistentPartialIso}
If $U$ is a $G$-action on a Hilbert module then $U_g$ is a partial isometry with
inverse partial isometry $U_g^*$ ($g \in G$).
\end{corollary}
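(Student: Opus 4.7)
The plan is to apply Lemma \ref{characterizationlemmaPartIso} with $T := U_g$ and $S := U_g^*$. The algebraic identities $T = TST$ and $S = STS$ are axioms of Definition \ref{definitionGHilbertModule}. Squaring $U_g = U_g U_g^* U_g$ yields $(U_g^* U_g)^2 = U_g^* U_g$, and since $U_g^* U_g$ is self-adjoint on $\cale$ by axiom, the Hilbert-module Cauchy--Schwarz inequality
$$
\|U_g^* U_g \,\xi\|^2 \;=\; \|\langle U_g^* U_g \,\xi,\, U_g^* U_g \,\xi\rangle\| \;=\; \|\langle \xi,\, U_g^* U_g \,\xi\rangle\| \;\leq\; \|\xi\|\cdot\|U_g^* U_g \,\xi\|
$$
gives norm contractivity and thus identifies $U_g^* U_g$ as an orthogonal projection in the sense of Definition \ref{defintionPartialisometry}. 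Symmetrically for $U_g U_g^*$.

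The essential remaining step is the norm contractivity of $U_g$ (and, symmetrically via (\ref{hilbmodrel4}), of $U_g^*$). From (\ref{hilbmodrel3}) and Cauchy--Schwarz, I would compute
$$
\|U_g \xi\|^2 \;=\; \bigl\|g(\langle \xi, U_g^* U_g \,\xi\rangle)\bigr\| \;\leq\; \|g\|_{\mathrm{op}}\cdot\|\xi\|^2,
$$
reducing the contractivity of $U_g$ on $\cale$ to the contractivity of the endomorphism $\alpha_g = g$ of the coefficient $G$-Hilbert $C^*$-algebra $B$. The latter is handled by the same argument, applied now to $B$ viewed as a $G$-Hilbert module over itself: the identity $\alpha_g(b)^* \alpha_g(b) = \alpha_g(b^* \,\alpha_g^* \alpha_g(b))$ derived from the inner-product axiom, combined with the fact that $\alpha_g^* \alpha_g$ is a projection on $B$, yields $\|\alpha_g(b)\|^2 \le \|\alpha_g\|_{\mathrm{op}}\cdot\|b\|^2$; solving the resulting inequality $\|\alpha_g\|_{\mathrm{op}}^2 \le \|\alpha_g\|_{\mathrm{op}}$ forces $\|\alpha_g\|_{\mathrm{op}} \le 1$.

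With all hypotheses of Lemma \ref{characterizationlemmaPartIso} verified, the lemma immediately gives that $U_g$ is a partial isometry with inverse partial isometry $U_g^*$. The main obstacle is the self-referential nature of the contractivity estimate: the axioms of Definitions \ref{definitionHilbertAlgebra} and \ref{definitionGHilbertModule} do not explicitly postulate boundedness, so contractivity must be bootstrapped, first for the self-adjoint idempotents $U_g^* U_g$ and $U_g U_g^*$ on both $\cale$ and $B$, and then passed from $B$ up to $\cale$ via the inner-product axiom.
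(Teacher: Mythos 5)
Your strategy is exactly the paper's: the entire published proof consists of the estimate $\|\langle U_g x, U_g x\rangle\| = \|g(\langle x, U_g^* U_g x\rangle)\| \le \|x\|^2$ followed by an appeal to Lemma \ref{characterizationlemmaPartIso}, and your verification of the remaining hypotheses (the algebraic identities from Definition \ref{definitionGHilbertModule}, and the fact that the self-adjoint idempotents $U_g^* U_g$ and $U_g U_g^*$ are contractive projections via Cauchy--Schwarz) is precisely the detail the paper leaves implicit. The symmetric treatment of $U_g^*$ via (\ref{hilbmodrel4}) is also fine.

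The one step that does not hold up as literally written is the final bootstrap on the coefficient algebra: from $\|\alpha_g(b)\|^2 \le \|\alpha_g\|_{\mathrm{op}}\,\|b\|^2$ you pass to $\|\alpha_g\|_{\mathrm{op}}^2 \le \|\alpha_g\|_{\mathrm{op}}$ and conclude $\|\alpha_g\|_{\mathrm{op}} \le 1$. That inference requires $\|\alpha_g\|_{\mathrm{op}} < \infty$ a priori, which is exactly what you announced is not postulated; the inequality $M^2 \le M$ is also satisfied by $M = \infty$, so under your own premise the self-referential estimate does not close at the algebra level. (It does close at the module level, since $\|U_g\|_{\mathrm{op}} \le \|\alpha_g\|_{\mathrm{op}}^{1/2}$ is a genuine bound once the right-hand side is finite.) The repair is that no bootstrap is needed for $B$: the action $\alpha_g$ lies in $\End(B)$, i.e.\ it is a $*$-endomorphism of the $C^*$-algebra $B$ (cf.\ Lemma \ref{lemmaCharacterizationHilbertCstar}), and a $*$-homomorphism between $C^*$-algebras is automatically norm-decreasing by the standard spectral-radius argument. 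This is what the paper silently uses when it writes $\|g(\langle x, U_g^* U_g x\rangle)\| \le \|x\|^2$ in one line. With that substitution your argument is complete and coincides with the intended proof.
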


\begin{proof}
The boundedness of $U_g$ follows from $\|\langle U_g x , U_g x\rangle\| =\|g(\langle x , U_g^* U_g x \rangle)\| \le \|x\|^2$,
and then one applies Lemma \ref{GactionConsistentPartialIso}.
\end{proof}

\begin{lemma} \label{IdemLemma}
A partial isometry $T$ satisfying $T=T T$ and $T^* = T^* T^*$ is a projection.
\end{lemma}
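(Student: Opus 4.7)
The plan is to exploit the geometric description of a partial isometry from Definition \ref{defintionPartialisometry}: $T$ maps its source space $\calh_0$ norm-isometrically onto its range space $\calh_1$ and vanishes on $\calh_0^\bot$, and similarly $T^*$ maps $\calh_1$ isometrically onto $\calh_0$ and vanishes on $\calh_1^\bot$. My goal is to prove $\calh_0 = \calh_1$ and that $T$ acts as the identity there, from which the projection property follows immediately.

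First, I would use $T = TT$ to show $T$ fixes every element of $\calh_1$. Indeed, any $y \in \calh_1$ has the form $y = Tx$ for some $x \in \calh_0$, and then $Ty = T(Tx) = TTx = Tx = y$. Since $T$ vanishes on $\calh_0^\bot$, no nonzero element of $\calh_1$ can lie in $\calh_0^\bot$, so $\calh_1 \subseteq \calh_0$. Running the symmetric argument with $T^* = T^*T^*$ in place of $T = TT$ (using that $T^*$ is a partial isometry with source $\calh_1$ and range $\calh_0$ by the discussion after Definition \ref{defintionPartialisometry}) gives the reverse inclusion $\calh_0 \subseteq \calh_1$. Hence $\calh_0 = \calh_1 =: \calh$.

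At this point I have established that $T$ restricts to the identity on $\calh$ and vanishes on $\calh^\bot$. Using the orthogonal decomposition $\cale = \calh \oplus \calh^\bot$ supplied by complementability, this is exactly to say that $T$ coincides with the source projection $Q = T^*T$ (or equivalently with the range projection $P = TT^*$). Since $Q$ is by definition a self-adjoint idempotent, so is $T$, i.e., $T$ is a projection.

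The only mild subtlety is to bear in mind that partial isometries in the present setting are not assumed adjoint-able a priori, so self-adjointness of $T$ is not tautological from $T = TT$ alone; it is delivered by identifying $T$ with the source projection, which is self-adjoint by hypothesis on $T$ as a partial isometry. No Hilbert-module calculation beyond this identification is needed.
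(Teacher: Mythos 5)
There is a genuine gap at the step where you pass from ``no nonzero element of $\calh_1$ lies in $\calh_0^{\bot}$'' to ``$\calh_1 \subseteq \calh_0$''. Knowing $\calh_1 \cap \calh_0^{\bot} = \{0\}$ only says that the $\calh_0$-component $Qy$ of a nonzero $y \in \calh_1$ is nonzero; it does not say that the $\calh_0^{\bot}$-component $(1-Q)y$ vanishes, which is what $y \in \calh_0$ means. In $\C^2$ take $\calh_0 = \C e_1$ and $\calh_1 = \C(e_1+e_2)$: each meets the other's orthogonal complement trivially, yet neither contains the other, so even combining your two inclusions does not rescue the inference. The symmetric step for $\calh_0 \subseteq \calh_1$ has the same flaw. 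Everything before this point (that $T$ fixes $\calh_1$ and $T^*$ fixes $\calh_0$) and everything after it (once $\calh_0 = \calh_1$ is known, $T$ equals the source projection) is correct.

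To repair it you must actually prove $(1-T^*T)y = 0$ for $y \in \calh_1$. In a Hilbert space one could argue $\|y\| = \|Ty\| = \|T(Qy)\| = \|Qy\|$ and invoke Pythagoras, but the lemma lives over Hilbert modules, where $\langle y,y\rangle = \langle Qy,Qy\rangle + \langle (1-Q)y,(1-Q)y\rangle$ together with $\|\langle Qy,Qy\rangle\| = \|\langle y,y\rangle\|$ does not force the second summand to vanish. This is exactly why the paper's proof takes a different route: it writes $y = y_0 + y_1$ with $y_0 = T^*Ty$, uses the \emph{second} hypothesis $T^* = T^*T^*$ to deduce $T^*y_1 = 0$, and then exploits the self-adjointness of the range projection $TT^*$ in the computation $0 = \langle TT^*y_1, y_0\rangle = \langle y_1, TT^*y_0\rangle = \langle y_1, y\rangle = \langle y_1, y_1\rangle$ to kill $y_1$. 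Note that your claimed inclusion $\calh_1 \subseteq \calh_0$ is derived in your write-up from $T = TT$ alone; the paper needs both hypotheses and the inner-product structure to establish it, which is a strong hint that no soft set-theoretic argument of the kind you propose can suffice.
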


\begin{proof}
Let $x \in \cale$. Set $y = T x$. Then $T y= T T y = Tx = y$.
Let $y= y_0 + y_1$ with $y_0 = T^* Ty$ and $y_1 = (1- T^* T) y$ be the orthogonal decomposition.
Then $T^* y = T^* T y = y_0$. Hence, $y_0 = T^* y= T^* T^* y = T^* y_0$, and thus
$T^* (y_0 + y_1) = y_0 = T^* y_0$, and so $T^* y_1= 0$.
We thus have
\begin{eqnarray*}
0 &=& \langle T T^* y_1, y_0 \rangle = \langle y_1, T T^* y_0 \rangle
= \langle y_1, T y_0 \rangle = \langle y_1, T T^* T y \rangle\\
&=& \langle y_1, T y \rangle = \langle y_1, y \rangle = \langle y_1, y_1 \rangle .
\end{eqnarray*}
Thus $y_1=0$ and so $T^* T y = y_0 = y = T y$. Hence, $T^* T T x = T T x$, and so $T^* T x = Tx$. Since $x$ was arbitrary,
$T^* T = T$, and thus $T$ is a projection.
\end{proof}

\begin{definition}
{\rm An element $g$ of a semimultiplicative set $G$ is called {\em
invertible} if there exists an element $h \in G$ such that $ghg=g$
and $h g h =h$. }
\end{definition}

Even if the inverse element $h$ may not be unique, we occasionally denote a given choice by
$h=g^{-1}$.

\begin{corollary} \label{corollaryHilbertactionInverses}
Assume that $\cale$ is a $G$-Hilbert module and $g \in G$ is invertible.
Then $U_g^*= U_{g^{-1}}$ and $U_{g^{-1}}^*= U_g$.
\end{corollary}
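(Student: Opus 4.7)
The plan is to apply Lemma \ref{characterizationlemmaPartIso} with $T = U_g$ and $S = U_{g^{-1}}$: once I verify its four hypotheses, uniqueness of the inverse partial isometry forces $U_g^{*} = U_{g^{-1}}$. By Corollary \ref{GactionConsistentPartialIso}, both $U_g$ and $U_{g^{-1}}$ are partial isometries and thus norm contractive, so only three conditions remain: $U_g U_{g^{-1}} U_g = U_g$, $U_{g^{-1}} U_g U_{g^{-1}} = U_{g^{-1}}$, and the two products $U_g U_{g^{-1}}$ and $U_{g^{-1}} U_g$ are projections.

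The first two identities are essentially automatic. Since $g g^{-1} g = g$ is defined, weak associativity forces both $(g g^{-1}) g$ and $g (g^{-1} g)$ to be defined, so the morphism property of $U$ yields $U_g U_{g^{-1}} U_g = U_{g g^{-1} g} = U_g$; the identity $U_{g^{-1}} U_g U_{g^{-1}} = U_{g^{-1}}$ follows in the same way from $g^{-1} g g^{-1} = g^{-1}$.

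For the projection claims I would invoke Lemma \ref{IdemLemma}. The element $P := U_g U_{g^{-1}} = U_{g g^{-1}}$ is a partial isometry by Corollary \ref{GactionConsistentPartialIso}, and $P^{2} = (U_g U_{g^{-1}} U_g)\, U_{g^{-1}} = P$ by the identity just established. Its inverse partial isometry is $P^{*} = U_{g^{-1}}^{*}\, U_g^{*}$, because $U^{*}$ is an anti-morphism and $g g^{-1}$ is defined. Re-running the same cascade through $U^{*}$, now using the identity $g^{-1} g g^{-1} = g^{-1}$ via the anti-morphism, gives $U_{g^{-1}}^{*} U_g^{*} U_{g^{-1}}^{*} = U_{g^{-1}}^{*}$ and hence $(P^{*})^{2} = P^{*}$. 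Lemma \ref{IdemLemma} then certifies that $P$ is a projection, and an identical argument with $g$ and $g^{-1}$ interchanged handles $U_{g^{-1}} U_g$.

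Lemma \ref{characterizationlemmaPartIso} now delivers $U_g^{*} = U_{g^{-1}}$; the companion identity $U_{g^{-1}}^{*} = U_g$ follows by applying the entire argument to $g^{-1}$, which is itself invertible with $g$ serving as an admissible inverse. The only step needing care --- and hence the main point where a sloppy argument would break --- is the consistent bookkeeping of weak associativity, so that the semigroup identities $g g^{-1} g = g$ and $g^{-1} g g^{-1} = g^{-1}$ can be pushed through the morphism $U$ and the anti-morphism $U^{*}$ without ever composing partial operators whose underlying products are undefined in $G$.
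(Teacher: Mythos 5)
Your proof is correct and follows essentially the same route as the paper's: both verify via Lemma \ref{IdemLemma} that $U_gU_{g^{-1}}$ and $U_{g^{-1}}U_g$ are projections and then conclude with Lemma \ref{characterizationlemmaPartIso} applied to $S=U_g$, $T=U_{g^{-1}}$. One minor caveat: weak associativity does not by itself force both parenthesizations of $gg^{-1}g$ to be defined (that is the stronger ``associative'' hypothesis), but your computations only ever use the single parenthesization supplied by the definition of invertibility, so nothing breaks.
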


\begin{proof}
Set $T= U_{g g^{-1}} = U_g U_{g^{-1}}$. Then $T T= T$ and $T^* T^* = T^*$. Hence $T$ is a projection by 
Lemma \ref{IdemLemma}. Similarly, $U_{g^{-1}} U_g$ is a projection.
By Lemma \ref{characterizationlemmaPartIso} (for $S:= U_g$ and $T:= U_{g^{-1}}$), $U_g^* =U_{g^{-1}}$.
\end{proof}




\section{Algebraic crossed products}

\label{sectionActionsCrossed}


In this section $G$ denotes a discrete general semimultiplicative set (if nothing else is said).
For the work with crossed products we shall need to consider also free products of elements of $G$ and their adjoints, and for that purpose
we shall introduce $G^*$ below.  

\begin{definition}
{\rm
An {\em involution} on a semigroup $S$ is a map $*: S \longrightarrow S: s \mapsto s^*$ such that
${(s^*)}^* = s$ and $(s t)^* = t^* s^*$ for all $s,t \in S$.
}
\end{definition}

\begin{definition}
{\rm
Define $F(G)$ to be the free semigroup generated by two copies of $G$. The elements of the second copy of $G$ are denoted by $g^*$ for $g \in G$
and stand for adjoint elements.
In other words, elements $\gamma$ of $F(G)$ consist of formal words
$\gamma = x_1^{\epsilon_1} \ldots x_n^{\epsilon_n}$
with $x_i \in G$ and $\epsilon_i \in \{1,*\}$.
}
\end{definition}

We shall occasionally denote the multiplication in $G$ by $g \odot h$ ($g,h \in G)$ to distinguish it
from the multiplication in $F(G)$.


\begin{definition} \label{simple_equivalences}
{\rm
Define $G^*$ to be the semigroup which is the quotient semigroup
of $F(G)$ by the following {\em elementary equivalences} defined for all $g,h \in G$.
%
%
\begin{eqnarray*}
&&g \odot h = g h, \quad (g \odot h)^* = h^* g^* \qquad \mbox{if $g \odot h$ is defined}\\
&&g = g g^* g, \quad g^* = g^* g g^*.
\end{eqnarray*}
}
\end{definition}

In other words, elements of $G^*$ consist of representatives living in $F(G)$,
and two representatives $\gamma, \delta \in F(G)$ are equivalent, if there is a finite sequence
of representatives in $F(G)$ starting with $\gamma$ and ending with $\delta$,
where two representatives in this sequence differ only by a single elementary equivalence
(within a word).

$G^*$ is an involutive semigroup by concatenation and taking the formal adjoints of representatives
of $F(G)$. For simplicity we shall omit the class brackets and write $g$ rather than the class $[g]$ for elements in $G^*$, where
$g \in F(G)$ is a representative.
Note that an element in $G^*$ need not be invertible: if $g,h \in G$ are incomposable in $G$ then
usually $g h (g h)^* g h \neq g h$ in $G^*$.

\begin{lemma} \label{lemmafromMorphtoStarMorph}
A morphism (resp. anti-morphism) $\varphi:G \longrightarrow H$ between semimultiplicative sets $G$ and $H$
extends canonically to a $*$-morphism (resp. $*$-anti-morphism) $G^* \longrightarrow H^*$.
\end{lemma}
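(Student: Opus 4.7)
The plan is to invoke the universal property of the free semigroup $F(G)$ and then check that the induced map respects the elementary equivalences of Definition~\ref{simple_equivalences}, so that it descends to a map on the quotient $G^*$.

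Given a morphism $\varphi: G \to H$, I define a semigroup homomorphism $\tilde\varphi: F(G) \to H^*$ by sending the generators $g, g^* \in F(G)$ to $\varphi(g), \varphi(g)^* \in H^*$ respectively and extending multiplicatively; this is unambiguous by freeness. To see that $\tilde\varphi$ factors through $G^*$, I need only check that it is constant on each generating elementary equivalence. For $g,h \in G$ with $g \odot h$ defined, the morphism hypothesis yields $\varphi(g) \odot \varphi(h) = \varphi(g \odot h)$ in $H$, so inside $H^*$ we have $\varphi(g)\varphi(h) = \varphi(g\odot h)$ and dually $\varphi(h)^*\varphi(g)^* = \varphi(g\odot h)^*$; these are exactly the images under $\tilde\varphi$ of the equivalences $g \odot h = gh$ and $(g \odot h)^* = h^* g^*$ in $G^*$. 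The equivalences $g = g g^* g$ and $g^* = g^* g g^*$ map under $\tilde\varphi$ to the same-shape relations in $H^*$, which hold by definition of $H^*$. Hence $\tilde\varphi$ descends to a semigroup homomorphism $\varphi^* : G^* \to H^*$, and since it swaps $g \leftrightarrow g^*$ on the generators it is a $*$-morphism.

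For an anti-morphism $\varphi$ the plan is identical, except the extension to $F(G)$ reverses the order of letters: the word $x_1^{\epsilon_1} \cdots x_n^{\epsilon_n}$ is sent to $\varphi(x_n)^{\epsilon_n} \cdots \varphi(x_1)^{\epsilon_1}$. The analogous checks go through — for the first elementary equivalence the anti-morphism identity $\varphi(g \odot h) = \varphi(h) \varphi(g)$ equates the images of $g \odot h$ and $gh$, while the second family is preserved letter-by-letter — and the resulting map $G^* \to H^*$ is a $*$-anti-morphism.

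The only point requiring care is that in the definition of $G^*$ an elementary equivalence may be applied to any subword of a representative in $F(G)$, not just at the top level. But this presents no real obstacle: since $\tilde\varphi$ is a (anti-)semigroup homomorphism, equal images on subwords may be substituted inside any larger product, so the verification at subword level reduces to the one performed above. Thus the extension is canonical and the lemma follows.
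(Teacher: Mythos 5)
Your proposal is correct and is essentially the paper's own argument: the paper's proof is a one-line statement that $\varphi$ induces a canonical $*$-morphism $F(G)\to F(H)$ respecting the elementary equivalences of Definition \ref{simple_equivalences}, which is exactly what you verify in detail (your map $F(G)\to H^*$ is just that map composed with the quotient $F(H)\to H^*$). The extra care you take with the anti-morphism case and with equivalences applied inside subwords is a faithful expansion of the same idea.
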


\begin{proof}
A morphism $\varphi: G \rightarrow H$ induces a canonical $*$-morphism $F(G) \longrightarrow F(H)$
which respects the elementary equivalences of Definition \ref{simple_equivalences}.
\end{proof}

%


%

For the work with crossed products it is useful to extend a $G$-action to a $G^*$-action, and this is what the next couple of lemmas
will be about.

\begin{lemma} \label{inverse_Action}
If $\phi$ is an injective $G$-action on a set $X$ and
$g \in G$ is invertible in $G$ then $\phi(g)^{-1} = \phi(g^{-1})$.
\end{lemma}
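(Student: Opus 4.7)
The plan is to unwind what it means for $g$ to be invertible in $G$ and then carefully track the domains of the resulting partial functions. By assumption there is $h \in G$ (written $h = g^{-1}$) with $ghg = g$ and $hgh = h$. Applying the morphism $\phi$ and using the definition of a $G$-action (so that composition of partial functions preserves both value and domain of definition), I get the identities of partial functions
\begin{eqnarray*}
\phi(g) &=& \phi(g)\,\phi(h)\,\phi(g),\\
\phi(h) &=& \phi(h)\,\phi(g)\,\phi(h),
\end{eqnarray*}
where the products are the (maximal-domain) composition of partial functions.

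First I would extract left/right inverse relations from these two identities. Fix $x \in \mathrm{dom}(\phi(g))$. Then the equality $\phi(g)(x) = \phi(g)\phi(h)\phi(g)(x)$ forces, by definition of composition of partial functions, $\phi(g)(x) \in \mathrm{dom}(\phi(h))$, and $\phi(h)\phi(g)(x) \in \mathrm{dom}(\phi(g))$, with $\phi(g)\bigl(\phi(h)\phi(g)(x)\bigr) = \phi(g)(x)$. Injectivity of $\phi(g)$ (from the hypothesis that $\phi$ is injective) then gives $\phi(h)\phi(g)(x) = x$. By the symmetric argument applied to the second identity, $\phi(g)\phi(h)(y) = y$ for every $y \in \mathrm{dom}(\phi(h))$. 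In particular $\mathrm{range}(\phi(g)) \subseteq \mathrm{dom}(\phi(h))$ and $\mathrm{range}(\phi(h)) \subseteq \mathrm{dom}(\phi(g))$.

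Next I would match the domains. For $y \in \mathrm{dom}(\phi(h))$, set $x := \phi(h)(y)$; then $x \in \mathrm{dom}(\phi(g))$ by the inclusion just noted, and $\phi(g)(x) = \phi(g)\phi(h)(y) = y$. Hence $y \in \mathrm{range}(\phi(g))$, giving $\mathrm{dom}(\phi(h)) \subseteq \mathrm{range}(\phi(g))$ and therefore equality. Combined with $\phi(h)\phi(g) = \mathrm{id}$ on $\mathrm{dom}(\phi(g))$, this says precisely that $\phi(h)$ is the set-theoretic inverse of the bijection $\phi(g): \mathrm{dom}(\phi(g)) \to \mathrm{range}(\phi(g))$, i.e.\ $\phi(g^{-1}) = \phi(h) = \phi(g)^{-1}$ as partial functions.

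The only delicate point is the bookkeeping of domains: the convention recalled just after the definition of an action (composition of partial functions takes the maximal domain, and equality of partial functions includes equality of domains) is what converts the algebraic identity $ghg = g$ in $G$ into the necessary inclusions $\mathrm{range}(\phi(g)) \subseteq \mathrm{dom}(\phi(h))$ and dually. Once that is made explicit, injectivity of $\phi$ does the rest and no further calculation is needed.
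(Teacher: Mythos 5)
Your proof is correct and follows essentially the same route as the paper's: both unwind $ghg=g$ and $hgh=h$ through the morphism property of the action (with the maximal-domain convention for composition of partial functions), use injectivity to get $\phi(h)\phi(g)=\mathrm{id}$ on $\mathrm{dom}(\phi(g))$ and $\phi(g)\phi(h)=\mathrm{id}$ on $\mathrm{dom}(\phi(h))$, and match $\mathrm{range}(\phi(g))$ with $\mathrm{dom}(\phi(h))$ to conclude. Your write-up is in fact more explicit about the domain bookkeeping than the paper's rather terse argument.
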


\begin{proof}
Let $h$ be an inverse element for $g$.
If $g x$ is defined then $(ghg) x= g (h (gx))$ is defined,
so $h (gx)$ is defined; and conversely, if $hx=hghx$ is defined then $x=ghx$ by injectivity of the $G$-action.
We have checked that the range of $\phi(g)$ is the domain of $\phi(h)$.
From $g hg x= gx$ it follows $gh x = x$ by injectivity of the $G$-action, and similarly $hgx=x$.
Thus $\phi(g)$ and $\phi(h)$ are inverses to each other.
\end{proof}

\begin{lemma} \label{lemmaActionGstar}
A continuous injective left $G$-action on a Hausdorff space $X$ can be extended to a continuous injective left $G^*$-action on $X$.
\end{lemma}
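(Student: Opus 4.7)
\emph{Proof plan.} For $g\in G$ write $D_g$ and $R_g$ for the clopen domain and range of $\phi(g)$. Since $\phi(g)$ is injective on $D_g$, it has a well-defined inverse partial function $\phi(g)^{-1}$ with domain $R_g$ and range $D_g$. The extension I would build sets $\tilde\phi(g):=\phi(g)$ and $\tilde\phi(g^*):=\phi(g)^{-1}$ on generators, extends multiplicatively to the free semigroup $F(G)$ by composition of partial functions, and is then shown to descend to the quotient $G^*$ from Definition \ref{simple_equivalences}.

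The descent amounts to verifying that each elementary equivalence is sent to an identity of partial functions on $X$. (i) For $g\odot h$ defined in $G$, the identity $\tilde\phi(gh)=\phi(g)\circ\phi(h)$ is exactly the defining condition of $\phi$ being a $G$-action. (ii) The relation $(g\odot h)^*=h^*g^*$ becomes $\phi(gh)^{-1}=\phi(h)^{-1}\circ\phi(g)^{-1}$, which is the standard reversal of inverses for composition of partial functions. (iii) Both $g=gg^*g$ and $g^*=g^*gg^*$ follow from the observation that $\phi(g)^{-1}\circ\phi(g)$ is the identity partial function on $D_g$ and $\phi(g)\circ\phi(g)^{-1}$ is the identity on $R_g$, so inserting these idempotents on the appropriate side of $\phi(g)$ or $\phi(g)^{-1}$ changes nothing.

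It remains to show that every $\tilde\phi(w)$, $w\in G^*$, is an injective continuous partial function with clopen domain and clopen range. Injectivity is inherited under composition of injective partial functions, and clopen-ness and continuity of compositions then follow by induction on word length, using the elementary fact that the preimage of a clopen set under a continuous partial function with clopen domain is clopen.

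The main obstacle, and where I would expect the real work, is ensuring that the single new generator $\tilde\phi(g^*)=\phi(g)^{-1}$ is itself a continuous partial function; a continuous injection in a Hausdorff space need not have a continuous inverse. I would resolve this by reading Definition \ref{Gaction_onHausdorffSet} as implicitly asserting that $\phi(g)$ is a homeomorphism of the clopen domain $D_g$ onto the clopen range $R_g$, which is the natural topological analogue of $U_g$ being a partial isometry with inverse partial isometry $U_g^*$ in the Hilbert-module setting (Corollary \ref{GactionConsistentPartialIso}). Under this standing convention, $\phi(g)^{-1}$ is automatically continuous on the clopen set $R_g$, and the assembled map $\tilde\phi$ is a continuous injective $G^*$-action on $X$ extending $\phi$.
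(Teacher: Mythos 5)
Your proof follows the paper's argument exactly: define the extension on $F(G)$ by composing the partial functions $\phi(g_i)^{\epsilon_i}$, check that this is invariant under the elementary equivalences of Definition \ref{simple_equivalences}, and verify injectivity, continuity and clopen-ness of domains and ranges by induction on word length. The one place where you go beyond the paper --- observing that continuity of $\phi(g)^{-1}$ is \emph{not} automatic for a continuous injection between clopen subsets of a Hausdorff space, and must therefore be read into Definition \ref{Gaction_onHausdorffSet} as the requirement that $\phi(g)$ be a homeomorphism onto its range --- is a genuine and correct observation: the paper's proof passes over this silently (its parenthetical only addresses clopen-ness of domains and ranges, not continuity of the inverses), so the standing convention you adopt is exactly the fix needed for the lemma to hold as stated.
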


\begin{proof}
Let $\phi: G \longrightarrow {\rm PartFunc}(X)$ be the $G$-action on $X$.
For $g=g_1^{\epsilon_1} \ldots g_n^{\epsilon_n} \in F(G)$ ($g_i \in G, \epsilon_i \in \{1,*\}$) define
\begin{equation}   \label{extendphi}
\hat \phi(g) = \phi({g_1})^{\epsilon_1} \circ \ldots \circ \phi({g_n})^{\epsilon_n}.
\end{equation}
Here, $\phi(g)^*$ denotes the inverse partial function for $\phi(g)$.
We have to show that (\ref{extendphi}) factors through $G^*$, in other words, we must show that $\phi$ is invariant under the elementary equivalences of
Definition \ref{simple_equivalences}.

Let $s, t \in F(G)$, $g,h \in G$ and $g \odot h \in G$ be defined. Then $s (g \odot h)^* t = s h^* g^* t$ in $G^*$.
By (\ref{extendphi}) and the definition of an action $\phi$ we have
\begin{eqnarray*}
&&\hat \phi(s (g \odot h)^* t)= \phi(s) \big (\phi(g \odot h) \big)^* \phi(t)\\
&=& \phi(s) \big(\phi(g) \phi(h) \big)^* \phi(t) = \phi(s) \phi(h)^* \phi(g)^* \phi(t) = \hat \phi(s h^* g^* t).
\end{eqnarray*}
The other elementary equivalences are checked similarly.
It is easy to see that the extended $\phi$ is also a continuous action
(the inverse partial functions and composition of partial functions have clopen domains and ranges again). 
%
%
%
\end{proof}

\begin{lemma} \label{lemmaHilbertmoduleGstar}
Every $G$-Hilbert $B$-module $\cale$ induces a morphism $\hat U:G^*
\longrightarrow {\rm LinMap}(\cale)$ extending the $G$-action $U$ on
$\cale$.
The relations (\ref{hilbmodrel1})-(\ref{hilbmodrel4}) hold also for all $g\in G^*$.
\end{lemma}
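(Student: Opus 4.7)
The plan is to define $\hat U$ first on the free semigroup $F(G)$ by sending each formal word $g_1^{\epsilon_1} \ldots g_n^{\epsilon_n}$ to the composition $U_{g_1}^{\epsilon_1} \circ \ldots \circ U_{g_n}^{\epsilon_n}$, where $U_{g_i}^*$ denotes the partial-isometry adjoint already supplied by the module structure (Corollary \ref{GactionConsistentPartialIso}). This is manifestly a semigroup morphism $F(G) \to {\rm LinMap}(\cale)$, so the first real task is to show it descends to $G^*$, after which the extended module relations are obtained by a short induction on word length, just as in the proof of Lemma \ref{lemmaActionGstar}.

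To pass from $F(G)$ to $G^*$ I would verify the four types of elementary equivalence of Definition \ref{simple_equivalences} one by one. The equivalence $g \odot h = gh$ translates into $U_g U_h = U_{gh}$, which holds because $U$ is a morphism; the equivalence $(g \odot h)^* = h^* g^*$ translates into $U_{gh}^* = U_h^* U_g^*$, which holds because $U^*$ is an anti-morphism; and the remaining two equivalences $g = gg^*g$ and $g^* = g^*gg^*$ are literally the partial-isometry axioms of Definition \ref{definitionGHilbertModule}. Hence $\hat U$ factors through $G^*$ and yields the desired semigroup morphism extending $U$.

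Before stating the module relations on $G^*$ I need meaning for $g(b)$ with $g \in G^*$, $b \in B$; since $B$ is itself a $G$-Hilbert $C^*$-algebra, hence a $G$-Hilbert module over itself, applying the very same construction to $B$ extends $\alpha$ and $\alpha^*$ to a morphism $G^* \to \End(B)$, which I continue to write $g(b)$. To check the extended (\ref{hilbmodrel1})-(\ref{hilbmodrel4}) I induct on the length $n$ of a representative of $g \in G^*$. The case $n=1$ is precisely the given relations. For the inductive step, factor $g = hk$ with $h$ of length $n-1$ and $k$ of length $1$; then for (\ref{hilbmodrel1}) I apply the base case to $k$ and the hypothesis to $h$, using $h(k(b)) = g(b)$, and for (\ref{hilbmodrel3}) I peel off $k$ then $h$ to obtain $g\bigl(\langle \xi, \hat U(k^*)\hat U(h^*)(\eta)\rangle\bigr) = g\bigl(\langle \xi, \hat U(g^*)(\eta)\rangle\bigr)$, since $k^* h^* = g^*$ in $G^*$ and $\hat U$ is multiplicative. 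The starred relations (\ref{hilbmodrel2}) and (\ref{hilbmodrel4}) follow by symmetric computations.

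The argument is essentially bookkeeping, so no single step stands out as a genuine obstacle; the one point I would watch carefully is that the symbol $U_g^*$ appearing in the extended relations for $g \in G^*$ must be interpreted as $\hat U(g^*)$, the image under $\hat U$ of the formal involution in $G^*$, rather than as some independently defined adjoint. The factorization established in the second paragraph is precisely what identifies the two interpretations and keeps the induction self-consistent.
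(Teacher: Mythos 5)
Your proposal is correct and follows essentially the same route as the paper: define $\hat U$ multiplicatively on $F(G)$, check that the elementary equivalences of Definition \ref{simple_equivalences} are respected because $U$ is a morphism, $U^*$ an anti-morphism, and $U_g = U_g U_g^* U_g$, and then verify the extended relations by induction on word length. Your explicit remarks about extending the action on $B$ and about reading $U_g^*$ as $\hat U(g^*)$ are exactly the points the paper leaves implicit.
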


\begin{proof}
For $g_1^{\epsilon_1} \ldots g_n^{\epsilon_n} \in F(G)$ ($g_i \in G, \epsilon_i \in \{1,*\}$) define
$$\hat U_{g_1^{\epsilon_1} \ldots g_n^{\epsilon_n}} = U_{g_1}^{\epsilon_1} \ldots U_{g_n}^{\epsilon_n}.$$
This map respects the elementary equivalences of Definition \ref{simple_equivalences} since $U$ and $U^*$
are a morphism and anti-morphism, respectively, by Definition \ref{definitionGHilbertModule}.
Consequently $\hat U$ factors through $G^*$.
The relations (\ref{hilbmodrel1})-(\ref{hilbmodrel4}) are checked by induction
(recall \cite[Lemma 3]{burgiSemimultiKK}).
%
%
\end{proof}

We emphasize that $\hat U$ of the last lemma is a morphism but not a $*$-morphism.
Usually $\cale$ is not a $G^*$-Hilbert module as $\hat U_g$ need not to be a partial isometry for $g \in G^*$.
It may thus be suggestive to write $\hat U_g^*$ for $U_{g^*}$ ($g \in G^*$) but one should be aware
that this star might not be a (well defined) operator on the sets of $U_g$'s.
There is no (obvious) involution in the image of $\hat U$.

We shall usually write $U$ rather than $\hat U$.



\begin{lemma} \label{lemmaHilbertCstarGstar}
(i) Every $G$-Hilbert $C^*$-algebra $A$ is also a $G^*$-Hilbert $C^*$-algebra.
In particular, there is a $*$-morphism  $\hat \alpha: G^*  \longrightarrow {\rm PartIso}(A) \cap \Endo(A)$ extending the $G$-action $\alpha$.

(ii) Every $G$-equivariant representation $\pi: A \longrightarrow \call(\cale)$ of $A$ on a $G$-Hilbert module $\cale$ is $G^*$-equivariant
in the sense that the identities (\ref{equivariantRep1})-(\ref{equivariantRep4}) hold also for $g \in G^*$
(where $U_g^*$ has to be interpreted as $U_{g^*}$).

(iii) For all $a,b \in A$ and $g \in G^*$ one has
$gg^{*}(ab) = gg^{*}(a) b = a gg^{*}(b)$.
\end{lemma}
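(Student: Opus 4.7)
The plan is to bootstrap both~(i) and~(ii) from Lemma~\ref{lemmaHilbertmoduleGstar} applied to $A$ itself, and then establish~(iii) by induction on the word length of $g\in G^*$. For~(i), regard $A$ as a $G$-Hilbert module over itself with $\langle x,y\rangle=x^*y$; Lemma~\ref{lemmaHilbertmoduleGstar} gives a morphism $\hat\alpha:G^*\to{\rm LinMap}(A)$ extending $\alpha$, and since $\alpha_g,\alpha_g^*\in\End(A)$ are $*$-endomorphisms of the $C^*$-algebra by hypothesis, any composite $\alpha_{g_1}^{\epsilon_1}\cdots\alpha_{g_n}^{\epsilon_n}$ is again a $*$-endomorphism, so $\hat\alpha$ lands in $\End(A)$; the partial-isometry property of the generators (Corollary~\ref{GactionConsistentPartialIso}) together with the fact that $\hat\alpha$ respects the elementary equivalences of Definition~\ref{simple_equivalences} delivers the $*$-morphism into ${\rm PartIso}(A)\cap\End(A)$. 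For~(ii), I would induct on the length of a representative of $g\in G^*$: the base case is given, and if $g=hg_0$ with $h$ a single letter, the relations \eqref{equivariantRep3}--\eqref{equivariantRep4} for $g$ follow by writing $U_g\pi(a)U_{g^*}=U_hU_{g_0}\pi(a)U_{g_0^*}U_{h^*}$ and applying the inductive hypothesis to $g_0$ and then to $h$; (\ref{equivariantRep1})--(\ref{equivariantRep2}) are handled the same way (essentially this is already implicit in~\cite{burgiSemimultiKK}).

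For~(iii), the base case $g\in G$ is the crucial observation. Set $P:=\alpha_g\alpha_g^*$. By Definition~\ref{definitionHilbertAlgebra}, $P$ is self-adjoint on the Hilbert module $A$, hence adjointable with adjoint equal to itself; adjointable operators on $A$ are right $A$-linear, so $P(ab)=P(a)b$. Since $P$ is also a composition of $*$-endomorphisms it satisfies $P(a^*)=P(a)^*$, and hence
\[
 P(ab) \;=\; P\bigl((b^*a^*)^*\bigr) \;=\; \bigl(P(b^*a^*)\bigr)^* \;=\; \bigl(P(b^*)\,a^*\bigr)^* \;=\; a\,P(b),
\]
which gives the identity for a single letter.

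For the inductive step, factor $g=hg_0\in G^*$ with $h$ a single generator (so $h\in G$ or $h=k^*$, $k\in G$) and $g_0$ shorter. Unfold
\[
gg^*(ab)\;=\;\hat\alpha(h)\bigl(g_0g_0^*\bigl(\hat\alpha(h^*)(a)\cdot\hat\alpha(h^*)(b)\bigr)\bigr)
\]
using the endomorphism property of $\hat\alpha(h^*)$, apply the inductive hypothesis for $g_0$, and then the endomorphism property of $\hat\alpha(h)$; this yields $gg^*(ab)=gg^*(a)\cdot hh^*(b)$. The clinching observation is that $gg^*(a)$ lies in the range of $\hat\alpha(h)$, hence is fixed by $hh^*$ (because $hh^*h=h$), so applying the base case of~(iii) for $h$ to $gg^*(a)\cdot b$ gives $hh^*(gg^*(a)\cdot b)=gg^*(a)\cdot b=hh^*(gg^*(a))\cdot hh^*(b)=gg^*(a)\cdot hh^*(b)$, closing the loop. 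The symmetric identity $gg^*(ab)=a\,gg^*(b)$ is proved in parallel using the other half of the inductive hypothesis. The main obstacle is that $gg^*$ need not be a projection in $G^*$ for general $g\in G^*$, so one cannot simply invoke the base case for $g$ itself; instead one has to feed the computation through one letter of $g$ at a time and exploit the fact that $hh^*$ acts as the identity on $\im(\hat\alpha(h))$, which is exactly what keeps the induction afloat.
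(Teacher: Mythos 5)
Your treatment of part (iii) is correct and takes a genuinely different route from the paper: you prove the centrality of $gg^*$ letter by letter, starting from the self-adjointness of $\alpha_g\alpha_g^*$ on the Hilbert module $A$ (hence adjointability, hence right $A$-linearity) and pushing through one generator $h$ at a time using $hh^*h=h$ in $G^*$ and the fact that $hh^*$ fixes $\im(\hat\alpha_h)$. The paper instead gets (iii) in essentially one stroke from the relation $\langle \hat\alpha_g(\xi),\eta\rangle=g(\langle\xi,\hat\alpha_{g^*}(\eta)\rangle)$, which Lemma \ref{lemmaHilbertmoduleGstar} already extends to all $g\in G^*$; the induction is the same in spirit but is packaged inside that lemma. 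Your version is more self-contained and equally valid. Part (ii) is dispatched by an unspecified induction both in your write-up and in the paper, so there is nothing to compare there.

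Part (i), however, has a genuine gap. You claim that the partial-isometry property of the generators (Corollary \ref{GactionConsistentPartialIso}) together with well-definedness of $\hat\alpha$ on $G^*$ ``delivers'' a map into ${\rm PartIso}(A)$. It does not: a composite of partial isometries is in general not a partial isometry, and the identity $gg^*g=g$ fails in $G^*$ for general $g$ (e.g.\ for $g=st$ with $s,t$ incomposable, as the paper points out after Definition \ref{simple_equivalences}), so you cannot read off $\hat\alpha_g\hat\alpha_{g^*}\hat\alpha_g=\hat\alpha_g$ from the semigroup structure of $G^*$. That identity is precisely what Definition \ref{definitionHilbertAlgebra} demands of a $G^*$-Hilbert $C^*$-algebra, and it must be established at the operator level. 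The paper does this by showing, via (iii), that all source and range projections $kk^*,k^*k$ ($k\in G$) are central multipliers commuting with one another, and then invoking the standard inductive fact that a product of partial isometries with commuting source and range projections is again a partial isometry whose inverse is the reversed product of the inverses (Lemma \ref{characterizationlemmaPartIso}); this simultaneously gives $(\hat\alpha_g)^*=\hat\alpha_{g^*}$, i.e.\ that $\hat\alpha$ is a $*$-morphism, a point you also leave unaddressed. Your part (iii) does in fact supply the needed commutativity (since $gg^*hh^*(a)\,b=gg^*(a)\,hh^*(b)=hh^*gg^*(a)\,b$ for all $b$), so the gap is repairable, but the commuting-projections argument is the actual content of (i) and must be carried out.
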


\begin{proof}
We extend the $G$-action $\alpha$ to a morphism $\hat \alpha$ on $A$ according to
Lemma \ref{lemmaHilbertmoduleGstar}. Let $g,h \in G^*$ and $a,b \in
A$.
We may write $\alpha_g \alpha_g^*(a) b = \langle \hat \alpha_g \hat \alpha_g^*(a^*), b \rangle$ for all $a \in A$ and $g \in G^*$.
Writing $\hat \alpha_g (a) = g(a)$, by identity (\ref{equivariantRep3}) (Lemma \ref{lemmaHilbertmoduleGstar}) we have
$$g g^* (a) b = \langle g g^* (a^*) , b  \rangle = g (
g^* (a) g^*(b)) = g g^*(a) g g^*(b),$$
and similarly $a g
g^*( b) = g g^*(a) g g^*( b)$. Hence $g g^*(a) b = a g
g^*(b)$,
that is, $g g^* \equiv \hat \alpha_g \hat \alpha_g^*$ is self-adjoint. Since $g g^* g g^*(a) b =
g g^* (a) g g^* (b) = g g^*(a) b$, $g g^*$ is a
projection. These identites prove already (iii). Now
$$g g^* h h^* (a) b = g g^* (h h^*(a) b)
= g g^* (a h h^* (b)) = g g^* (a) h h^* (b) =  h h^*
g g^* (a) b,$$
that is, $g g^*$ and $h h^*$ commute. Hence $g \equiv \hat \alpha_g$
is the product of partial isometries $\alpha_i,\alpha_j^*$ ($i,j \in G$) with commuting range and source
projections and thus by a standard inductive proof and Lemma
\ref{characterizationlemmaPartIso} a partial isometry with inverse
partial isometry $\hat \alpha_{g}^* =
\hat \alpha_{g^*}$.
This shows that $\hat \alpha$ maps into the partial isometries, and is thus a $G^*$-action, which proves (i).
The $G^*$-equivariance claimed in (ii) (meaning that the formulas of
Definion \ref{definitionGstarEquivariance} hold) follows by
induction; see also \cite[Lemma 9]{burgiSemimultiKK}.
\end{proof}

%

\begin{lemma} \label{lemmaGspaceHilbertCstar}
Let $X$ be a Hausdorff space equipped with an injective continuous right $G$-action $\tau$.
Then $C_0(X)$ is a $G$-Hilbert $C^*$-algebra under the action 
$\alpha_g(f)x =1_{\{\tau_g(x) \mbox{ is defined}\} } f(\tau_g(x))$ ($\alpha_g^* := \alpha_g^{-1}$) for $f \in C_0(X), g \in G$ and $x \in X$.
%
\end{lemma}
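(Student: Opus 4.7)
The plan is to verify each axiom of Definition \ref{definitionHilbertAlgebra} by reducing everything to pointwise statements on $X$. Denote by $D_g$ and $R_g$ the clopen domain and range of $\tau_g$; since the right action is continuous and injective, $\tau_g: D_g \to R_g$ is a homeomorphism between clopen, locally compact subspaces of $X$. For $f \in C_0(X)$, its restriction to $R_g$ lies in $C_0(R_g)$, hence the pullback $f \circ \tau_g$ lies in $C_0(D_g)$; extending by zero across the clopen complement gives $\alpha_g(f) \in C_0(X)$, and the analogous construction with $\tau_g^{-1}$ produces $\alpha_g^*(f) \in C_0(X)$. Both $\alpha_g$ and $\alpha_g^*$ are $*$-endomorphisms, since pullback along a (partial) homeomorphism commutes with pointwise multiplication, addition, and complex conjugation, and multiplication by $1_{D_g}$ (resp.\ $1_{R_g}$) is itself a $*$-endomorphism of $C_0(X)$ because the indicator of a clopen set is a continuous projection.

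Next I would check the multiplicativity of $\alpha$ and the anti-multiplicativity of $\alpha^*$. Since $\tau$ is a right action (an anti-morphism), whenever $gh$ is defined in $G$ one has $\tau_{gh} = \tau_h \circ \tau_g$ as partial functions, and the identity $D_{gh} = D_g \cap \tau_g^{-1}(D_h)$ translates into $1_{D_g}(x) \cdot 1_{D_h}(\tau_g(x)) = 1_{D_{gh}}(x)$. A direct computation then gives $\alpha_g \circ \alpha_h = \alpha_{gh}$, and the dual computation on the inverse partial functions yields $\alpha_h^* \circ \alpha_g^* = \alpha_{gh}^*$.

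The partial-isometry axioms and inner-product compatibility then reduce to elementary pointwise calculations: $\alpha_g \alpha_g^*(f) = 1_{D_g} \cdot f$ and $\alpha_g^* \alpha_g(f) = 1_{R_g} \cdot f$, so both are multiplication by a real clopen-indicator and are therefore self-adjoint projections on $C_0(X)$ viewed as a Hilbert module over itself. The relations $\alpha_g = \alpha_g \alpha_g^* \alpha_g$ and $\alpha_g^* = \alpha_g^* \alpha_g \alpha_g^*$ are then immediate, since $\alpha_g(f)$ is already supported in $D_g$ and $\alpha_g^*(f)$ in $R_g$. Both sides of each inner-product identity reduce pointwise to $1_{D_g}(p) \overline{x(\tau_g(p))} y(p)$ (respectively $1_{R_g}(p) \overline{x(\tau_g^{-1}(p))} y(p)$), using $\tau_g^{-1}(\tau_g(p)) = p$ for $p \in D_g$ and $\tau_g(\tau_g^{-1}(p))=p$ for $p \in R_g$. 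Since $C_0(X)$ is trivially graded there is no grading check. The only delicate point is the well-definedness in the first paragraph, which hinges on $\tau_g^{-1}: R_g \to D_g$ being continuous on its clopen domain; this is the right-action analogue of Definition \ref{Gaction_onHausdorffSet} and is part of the continuous-action data.
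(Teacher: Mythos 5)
Your proposal is correct and follows essentially the same route as the paper, which notes that the domains $D_g$ and ranges $R_g$ are clopen, that $\alpha_g$ maps $1_{R_g}C_0(X)$ onto $1_{D_g}C_0(X)$ with $\alpha_g^*$ as inverse, and then leaves the verification of Definition \ref{definitionHilbertAlgebra} to the reader. You simply carry out that verification pointwise, and you correctly flag the one genuinely delicate point (continuity of $\tau_g^{-1}$ on its clopen range), which the paper likewise absorbs into the continuous-action hypothesis via Lemma \ref{lemmaActionGstar}.
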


\begin{proof}
By definition of a continuous action $\tau$ on $X$, the domain and range, respectively, of $\tau_g$ is a clopen subset $D_g$ and $R_g$, respectively, of $X$.
So $\alpha_g(f)$ is indeed a continuous function. $\alpha_g$ projects onto $1_{D_g} C_0(X)$, and $\alpha_g$ moves $1_{R_g} C_0(X)$ onto $1_{D_g} C_0(X)$.
$\alpha_g^*$ is the inverse map.
It is straightforward to verify Definition \ref{definitionHilbertAlgebra} and this is left to the reader.
%
%
\end{proof}


%

We give another characterization of a Hilbert $C^*$-algebra.

\begin{lemma}   \label{lemmaCharacterizationHilbertCstar}
Let $A$ be a $C^*$-algebra. Then $A$ is a Hilbert $C^*$-algebra with
$G$-action $\alpha$ if and only if $\alpha$ is a morphism $\alpha:G \longrightarrow {\rm PartIso}(A) \cap {\rm End}(A)$, and
for every $g \in G$ the source and range projections $\alpha_g^*
\alpha_g, \alpha_g \alpha_g^*$ are in $Z \calm(A)$ (center of the
multiplier algebra of $A$).
\end{lemma}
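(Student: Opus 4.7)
I would prove the two directions of the equivalence separately.

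\textit{Forward direction.} If $A$ is a $G$-Hilbert $C^*$-algebra, then $\alpha : G \to \End(A)$ is a semimultiplicative morphism by Definition \ref{definitionHilbertAlgebra}. Regarding $A$ as a Hilbert $A$-module over itself, Corollary \ref{GactionConsistentPartialIso} shows each $\alpha_g$ is a partial isometry with inverse partial isometry $\alpha_g^*$, so $\alpha$ lands in ${\rm PartIso}(A) \cap \End(A)$. For the centrality, Lemma \ref{lemmaHilbertCstarGstar}(iii) gives $gg^*(ab) = gg^*(a)\,b = a\,gg^*(b)$ for every element of $G^*$; applied to $g$ and separately to $g^*\in G^*$, this exhibits the self-adjoint projections $\alpha_g\alpha_g^*$ and $\alpha_g^*\alpha_g$ as two-sided centralizers whose left and right multiplier actions coincide, hence as central self-adjoint projections in $\calm(A)$.

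\textit{Converse direction.} Given $\alpha$ as stated, define $\alpha_g^*$ to be the inverse partial isometry of $\alpha_g$ and write $p_g := \alpha_g^*\alpha_g$, $r_g := \alpha_g\alpha_g^*$ for the source and range projections, regarded both as module operators and as central multipliers. The key computational tools are the absorption identities $\alpha_g(p_g\,y)=\alpha_g\alpha_g^*\alpha_g(y)=\alpha_g(y)$ and $\alpha_g^*(r_g\,y)=\alpha_g^*(y)$. Using these I would verify the axioms of Definition \ref{definitionHilbertAlgebra}: (a) $\alpha_g^* \in \End(A)$, because it is a $*$-isomorphism $r_g A \to p_g A$ (the inverse of the restriction $\alpha_g|_{p_g A}$) extended by zero on $(1-r_g)A$; (b) self-adjointness of $p_g,r_g$ as module operators is immediate from $\langle p_g x, y\rangle = x^*p_g y = \langle x, p_g y\rangle$ using centrality and $p_g^*=p_g$; (c) the compatibility $\langle \alpha_g(x),y\rangle = \alpha_g(\langle x,\alpha_g^*(y)\rangle)$ expands, using that $\alpha_g$ is a $*$-homomorphism, to $\alpha_g(x)^*\,r_g\,y$, and this equals $\alpha_g(x)^* y$ because $\alpha_g(x)\in r_g A$ gives $\alpha_g(x)^* r_g = \alpha_g(x)^*$; the dual compatibility is symmetric.

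\textit{Main obstacle.} The real substance of the proof is showing that $\alpha^*$ is an anti-morphism, namely $(\alpha_g\alpha_h)^* = \alpha_h^*\alpha_g^*$ whenever $gh$ is defined. By Lemma \ref{characterizationlemmaPartIso} this reduces to checking $(\alpha_g\alpha_h)(\alpha_h^*\alpha_g^*)(\alpha_g\alpha_h) = \alpha_g\alpha_h$ together with its symmetric partner. Evaluating at $a \in A$,
\[
\alpha_g\bigl(r_h\,p_g\,\alpha_h(a)\bigr) = \alpha_g\bigl(p_g\,r_h\,\alpha_h(a)\bigr) = \alpha_g\bigl(r_h\,\alpha_h(a)\bigr) = \alpha_g\bigl(\alpha_h(a)\bigr),
\]
where the first equality uses that the central multipliers $p_g$ and $r_h$ commute, the second uses the absorption identity $\alpha_g(p_g\,\cdot) = \alpha_g(\cdot)$, and the third uses $r_h\alpha_h = \alpha_h\alpha_h^*\alpha_h = \alpha_h$. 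This step is delicate precisely because $\alpha_g$ cannot be naively distributed over a multiplier factor; centrality is what allows the necessary commutations and the partial-isometry identities are what allow the absorptions, so both halves of the hypothesis are genuinely needed.
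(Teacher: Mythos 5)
Your proof is correct and follows essentially the same route as the paper: the forward direction reduces to Lemma \ref{lemmaHilbertCstarGstar}(iii) (the paper instead cites a remark in \cite{burgiSemimultiKK}), and the converse rests on exactly the identity $gg^*(ab)=gg^*(a)b=a\,gg^*(b)$ extracted from centrality of the source and range projections, from which the compatibility $\langle g(a),b\rangle=g\langle a,g^*(b)\rangle$ follows. You are in fact more complete than the paper's two-line converse, since you explicitly verify that $g\mapsto\alpha_g^*$ is an anti-morphism via the absorption and commutation identities (a point the paper leaves tacit); the only detail left implicit on your side is that Lemma \ref{characterizationlemmaPartIso} also asks the products $\alpha_h^*\alpha_g^*\alpha_g\alpha_h$ and $\alpha_g\alpha_h\alpha_h^*\alpha_g^*$ to be projections, which follows by the same manipulations.
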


\begin{proof}
If $A$ is a Hilbert $C^*$-algebra then source and range projections
of $\alpha_g$ are in $Z\calm(A)$ as remarked in \cite[Section
7]{burgiSemimultiKK}. Conversely, assume the condition. Then $A
\subseteq \call(A)$ by left multiplication. Since $g g^*$ is in
$Z \calm (A)$, $g g^*$ commutes with the left multiplication
operator $L_a(b)= ab$ ($a,b\in A$), and so $g g^*(a b ) = a g
g^*(b)$. Moreover, $g g^*(ab) = g g^* (a) b$ (since $g g^* \in \call(\cale)$). In particular,
$gg^* (a) b = g g^*
(a b) = a g g^*(b)$. With this one easily gets $\langle
g(a),b\rangle = g \langle a, g^*(b) \rangle$.
\end{proof}

We shall now come to crossed products by $G$.

\begin{definition}   \label{definitionAlgCrossedProduct}
{\rm Let $A$ be a $G$-Hilbert $C^*$-algebra.
Write $\F(G,A)$
for the universal $*$-algebra
generated by $A$ and $G$ subject to the following relations: The
$*$-algebraic relations of $A$ are respected
and the identities
\begin{eqnarray}
&&g \odot h = g h \qquad \mbox{if $g \odot h$ is defined}, \label{ident0}\\
&&g g^* g = g, \quad
g g^* a = a g g^*,\quad  \label{ident1}
g^* g a = a g^* g,\\
&&g a g^* = g(a) g g^*, \quad  \label{ident2}
g^* a g = g^*(a) g^* g
\end{eqnarray}
hold true for
all $g,h \in G$ and $a \in A$.
}
\end{definition}

\begin{definition}
{\rm
Let $A$ be a $G$-Hilbert $C^*$-algebra.
The {\em algebraic crossed product}
$A \rtimes_{\rm alg} G$ of $A$ by
$G$ is the $*$-subalgebra of $\F(G,A)$
generated by the set
$$\{a g \in\F(G,A)|\, a \in A, g \in G\}.$$
}
\end{definition}




Let $A$ be a $G$-Hilbert $C^*$-algebra.
Write
$$A_g = g g^*(A)$$
for $g \in G^{*}$.
$A_g$ is a two-sided closed ideal in $A$
by Lemma \ref{lemmaHilbertCstarGstar} (iii).

\begin{lemma} \label{lemmaReprConvolutionAlg}
$A \rtimes_{\rm alg} G$ is canonically isomorphic to the $*$-algebra $C_c(G^*,A)$ consisting of
formal finite sums $\sum_{g \in G^*} a_g g$ ($a_g \in A_g$)
with involution
$$\Big (\sum_{g \in G^*} a_g g \Big)^* = \sum_{g \in G^*} g^*(a_g^*) g^*$$
and convolution product
$$\sum_{g \in G^*} a_g g \sum_{h \in G^*} b_h h = \sum_{g,h \in G^*} a_g g(b_h) g h.$$
\end{lemma}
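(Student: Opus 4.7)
The plan is to construct a $*$-algebra isomorphism $\Phi : C_c(G^*,A) \to A \rtimes_{\mathrm{alg}} G$ sending $\sum_\gamma a_\gamma \gamma$ to the same formal expression interpreted inside $\F(G,A)$. I would proceed in three stages: (i) verify that $C_c(G^*,A)$ is a well-defined $*$-algebra; (ii) show $\Phi$ is a surjective $*$-homomorphism via a normal-form argument inside $\F(G,A)$; (iii) establish injectivity.

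For (i), the main point is that the convolution coefficient $a_g g(b_h)$ lies in $A_{gh}$ whenever $a_g \in A_g$ and $b_h \in A_h$. Writing $b_h = \hat\alpha_{hh^*}(b_h)$ and using $(hh^*)^2 = hh^*$ in $G^*$, one sees that $g(b_h) = \hat\alpha_{ghh^*}(b_h)$ is in the range of $\hat\alpha_{ghh^*}$, whose range projection simplifies to $\hat\alpha_{(gh)(gh)^*}$; hence $g(b_h) \in A_{gh}$, and Lemma~\ref{lemmaHilbertCstarGstar}(iii) gives $a_g g(b_h) \in A_{gh}$. Associativity of the convolution and anti-multiplicativity of the involution then reduce to the morphism/anti-morphism properties of $\hat\alpha$ from Lemma~\ref{lemmaHilbertCstarGstar}(i). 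For (ii), inside $\F(G,A)$ I would derive the key rewriting identity
\[
g\cdot a \;=\; gag^*g \;=\; g(a)\cdot g \qquad (a \in A,\ g \in G),
\]
using $g = gg^*g$, the centrality of $g^*g$ from (\ref{ident1}), and the consequence $gag^*g = g(a)g$ of (\ref{ident2}); symmetrically $g^*\cdot a = g^*(a)\cdot g^*$. Induction on word length extends this to $\gamma\cdot a = \hat\alpha_\gamma(a)\cdot\gamma$ for all $\gamma \in G^*$, and combining with its adjoint form yields $a\cdot\gamma = \hat\alpha_{\gamma\gamma^*}(a)\cdot\gamma$, normalising the coefficient into $A_\gamma$. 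Every monomial in $A\rtimes_{\mathrm{alg}} G$ therefore reduces to $\sum_\gamma a_\gamma \gamma$ with $a_\gamma \in A_\gamma$, so $\Phi$ is a well-defined surjective $*$-homomorphism (the elementary equivalences of Definition~\ref{simple_equivalences} being already among the defining relations of $\F(G,A)$).

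Stage (iii), injectivity, is the main obstacle: one must show that if $\sum_\gamma a_\gamma\gamma$ vanishes in $A \rtimes_{\mathrm{alg}} G \subset \F(G,A)$, then each $a_\gamma = 0$. My preferred route is to construct a left-inverse $\Psi$ by letting $\F(G,A)$ act on $C_c(G^*,A)$ itself. Define left multiplications $L_a(\sum b_h h) := \sum (ab_h)h$ (well defined since each $A_h$ is a two-sided ideal by Lemma~\ref{lemmaHilbertCstarGstar}(iii)), $L_g(b_h h) := g(b_h)\,(gh)$ and $L_{g^*}(b_h h) := g^*(b_h)\,(g^*h)$, with right multiplications defined dually through the normalisation $R_g(b_h h) := \hat\alpha_{(hg)(hg)^*}(b_h)\,(hg)$ dictated by stage (ii). A direct but mechanical check using Lemma~\ref{lemmaHilbertCstarGstar} shows these operators satisfy relations~(\ref{ident0})--(\ref{ident2}), so the universal property of $\F(G,A)$ produces a $*$-algebra homomorphism $\rho : \F(G,A) \to \End(C_c(G^*,A))$. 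Then $\rho\circ\Phi$ acts on $C_c(G^*,A)$ by its own intrinsic convolution, and evaluating on a well-chosen test element (using an approximate unit in $A$ to detect each coordinate) forces $\Phi(\sum a_\gamma\gamma) = 0 \Rightarrow a_\gamma = 0$ for every $\gamma$. Should the non-unitality of $A$ make this multiplier-style argument awkward, an alternative is to realise $\F(G,A)$ faithfully on $\bigoplus_{\gamma\in G^*}H_0$ for a faithful $\pi_0:A\to B(H_0)$, with each $\gamma$ acting as a support-shifting partial isometry on the $G^*$-index set; the disjointness of the shifts then separates the coefficients.
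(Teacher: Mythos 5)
Your stages (i) and (ii) track the paper's proof: the same rewriting identity $\gamma a = \hat\alpha_\gamma(a)\gamma$ in $\F(G,A)$ proved by induction on word length, and the same normalisation $a\gamma = \hat\alpha_{\gamma\gamma^*}(a)\gamma$ placing coefficients in $A_\gamma$. One small omission there: the normal-form argument shows that $A\rtimes_{\rm alg}G$ is contained in the set of expressions $\sum_\gamma a_\gamma\gamma$ with $a_\gamma\in A_\gamma$, but for $\Phi$ to have codomain $A\rtimes_{\rm alg}G$ you also need the reverse containment, i.e.\ that $a\gamma$ lies in $A\rtimes_{\rm alg}G$ for words $\gamma$ of length at least two; the paper obtains this from the factorisation $a\,gh=(a^{1/2}g)(g^{*}(a^{1/2})h)$ for positive $a$, and you should add it.

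The substantive gap is in stage (iii). Representing $\F(G,A)$ on $C_c(G^*,A)$ by left multiplication and then ``detecting each coordinate with a well-chosen test element'' is not obviously possible, because $G^*$ has no unit and no cancellation and the $A_\gamma$ are proper ideals. If $x=\sum_\gamma a_\gamma\gamma$ and $L_x=0$, then evaluating on $b_hh$ only yields $\sum_{\gamma:\,\gamma h=k}a_\gamma\hat\alpha_\gamma(b_h)=0$ for each $k\in G^*$; letting $b_h=\hat\alpha_{hh^*}(u_i)$ for an approximate unit $(u_i)$ of $A$ recovers at best the compressions $\hat\alpha_{\gamma hh^*\gamma^*}(a_\gamma)$ summed over the fibre of $\gamma\mapsto\gamma h$. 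The choice $h=\gamma^*$ does make the compression act as the identity on $A_\gamma$ (since $\hat\alpha$ maps into partial isometries by Lemma \ref{lemmaHilbertCstarGstar}(i), even though $\gamma$ need not be invertible in $G^*$), but you must still prove that $\{\gamma'\in\mathrm{supp}(x):\gamma'h=\gamma h\}=\{\gamma\}$, a nontrivial combinatorial statement about left multiplication in the quotient semigroup $G^*$ which you do not establish; the same separation problem reappears in your alternative ``support-shifting'' representation on $\bigoplus_\gamma H_0$. The paper sidesteps this entirely by enlarging the target to the $*$-algebra $D=A\oplus C_c(G^{*},A)\oplus G^*$ (the last summand read as the linear span of $G^*$), with the cross-multiplications dictated by the rewriting rules, e.g.\ $g\cdot a=g(a)g\in C_c(G^*,A)$. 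The universal property of $\F(G,A)$ then gives a homomorphism $\phi:\F(G,A)\to D$ whose restriction to $A\rtimes_{\rm alg}G$ sends $\sum_\gamma a_\gamma\gamma$ to itself inside the direct summand $C_c(G^*,A)$, so injectivity is immediate from the direct-sum decomposition and no test-element evaluation is required. I recommend adopting that target: verifying that $D$ is an associative $*$-algebra satisfying (\ref{ident0})--(\ref{ident2}) is exactly your ``direct but mechanical check'', so nothing further is lost.
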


\begin{proof}
By induction on the length of a word in $G^*$ one checks that $g a = g(a) g$ holds in $\F(G,A)$ for all $g \in G^*$.
Note that $g(a) = g g^* g(a) \in A_g$ since the $G^*$-action on a Hilbert $C^*$-algebra is realized by partial isometries (Lemma \ref{lemmaHilbertCstarGstar}).
One has
\begin{equation}   \label{ginfa}
a g = (g^* a^*)^* = (g^*(a^*) g^*)^* = g g^*(a) g = a_g g
\end{equation}
for all $a \in A$ and $g \in G^*$, where $a_g := g g^*(a) \in A_g$.
It follows that
\begin{eqnarray}
&& g g^* a = g g^*(a) g g^* = a g g^*  \label{someeq123}\\
&& g a g^* = g(a) g g^*  \label{someeq124}
\end{eqnarray}
for all $a \in A$ and $g \in G^*$.
Define $D=A \oplus C_c(G^{*}, A) \oplus G^*$.
Endow $D$ with the algebraic structure on the summands as given, and between the summands as we have it in $\F(G,A)$, for instance $g \cdot a = g(a) g \in C_c(G^*,A)$
for $a \in A$ and $g \in G^*$.
By universality of $\F(G,A)$ there is a $*$-homomorphism
$\phi: \F(G,A) \longrightarrow D$ such that $\phi(a) = a$ and $\phi(g) = g$ for all $a \in A$ and $g \in G^*$
(using (\ref{someeq123})-(\ref{someeq124})).
It is obviously injective, as $D$, and particularly $C_c(G^*,A)$, is a direct sum.
The restriction $\phi'$ of $\phi$ to $A \rtimes_{\rm alg} G$ yields $C_c(G^*,A)$.
The surjectivity of $\phi'$ follows by induction from the factorization
$$a g h = (a^{1/2} g) (g^* (a^{1/2}) h)$$
for $a \in A_+$ and $g,h \in G$.
\end{proof}

\begin{lemma}   \label{lemmaIsomFGA}
(i) There is a linear isomorphism
$$\F(G,A) \cong A \oplus C_c(G^{*}, A) \oplus G^*.$$
(ii) The identities (\ref{ident1})-(\ref{ident2}) hold for all $a \in A$ and $g \in G^*$.
\end{lemma}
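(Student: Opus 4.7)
\medskip
\textbf{Proof plan.} The strategy is to piggyback on the construction from the proof of Lemma \ref{lemmaReprConvolutionAlg}. There, an injective $*$-homomorphism
$$\phi: \F(G,A) \longrightarrow D := A \oplus C_c(G^{*}, A) \oplus G^{*}$$
was built with $\phi(a)=a$ and $\phi(g)=g$ on generators, and its restriction to the subalgebra $A \rtimes_{\rm alg} G$ was shown to map onto the middle summand $C_c(G^{*}, A)$. Hence for part (i) only surjectivity onto the two outer summands remains to be checked, and for part (ii) the key identities are essentially already in hand from the same proof.

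For (i), surjectivity onto the $A$-summand is immediate from $\phi|_A = \mathrm{id}_A$. Surjectivity onto the $G^{*}$-summand, understood as the free $\C$-linear span of $G^{*}$ inside $D$, is equally direct: every $g \in G^{*}$ is the class of a word $g_1^{\epsilon_1} \cdots g_n^{\epsilon_n} \in F(G)$, and the corresponding product in $\F(G,A)$ maps under $\phi$ to the class of the same word in the $G^{*}$-summand. Combined with the surjectivity onto $C_c(G^{*}, A)$ coming from Lemma \ref{lemmaReprConvolutionAlg} and the injectivity of $\phi$, this upgrades $\phi$ to the claimed linear isomorphism.

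For (ii), the identities $g g^{*} a = a g g^{*}$ and $g a g^{*} = g(a)\, g g^{*}$ for $g \in G^{*}$ are precisely equations (\ref{someeq123}) and (\ref{someeq124}) derived in the proof of Lemma \ref{lemmaReprConvolutionAlg}; these in turn rest on the preliminary identity $g a = g(a) g$ for $g \in G^{*}$ proved there by induction on word length, using the extended $G^{*}$-action on $A$ supplied by Lemma \ref{lemmaHilbertCstarGstar}. The remaining identities $g^{*} g a = a g^{*} g$ and $g^{*} a g = g^{*}(a)\, g^{*} g$ then follow by substituting $g^{*} \in G^{*}$ in place of $g$ in the two previous identities, using $(g^{*})^{*} = g$ in the involutive semigroup $G^{*}$.

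The only real subtlety, rather than a true obstacle, is the interpretation of the $G^{*}$-summand of $D$ as a vector space and the verification that the image of $\phi$ decomposes cleanly across the three summands without overlap. The latter is forced by the injectivity of $\phi$ already established in Lemma \ref{lemmaReprConvolutionAlg}, so no new substantive computation is required beyond the work done there.
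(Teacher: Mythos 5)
Your proposal is correct and follows essentially the same route as the paper, whose proof of this lemma consists precisely of deferring to the construction of $\phi:\F(G,A)\to A\oplus C_c(G^*,A)\oplus G^*$ and the identities (\ref{someeq123})--(\ref{someeq124}) established in the proof of Lemma \ref{lemmaReprConvolutionAlg}. Your added remarks on reading the $G^*$-summand as its linear span and on obtaining the starred identities by replacing $g$ with $g^*$ are consistent with the paper's (terser) treatment.
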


\begin{proof}
This was proved in Lemma \ref{lemmaReprConvolutionAlg}.
\end{proof}

One usually has not cancellation in $G^*$, even if $G$ has it.
Assume for instance that $g,h \in G$
are not invertible and not composable in $G$. Then usually $h \neq g^* g h$ in $G^*$.
For this reason we need not have a transformation like `$x = gh$ $\Leftrightarrow$ $g^* x = h$' in the convolution
product of Lemma \ref{lemmaReprConvolutionAlg}.

\begin{definition}   \label{defCovHil}
{\rm
By a {\em covariant representation} of a
$G$-Hilbert $C^*$-algebra $A$ we mean a $G$-equivariant
representation $\pi:A \longrightarrow B(H)$ on a $G$-Hilbert space $H$ (Definition \ref{definitionGHilbertModule} with
trivial $G$-action on $\C$) in the sense of Definition
\ref{definitionGstarEquivariance}.
}
\end{definition}

\begin{lemma} \label{lemmaBijectionCovRepF}
Restricting a $*$-homomorphism $\phi: \F(G,A) \longrightarrow B(H)$ of $\F(G,A)$ to $A$ and $G$ gives a covariant representation
$(\phi|_A, \phi|_G,H)$
of $A$.
Conversely, a covariant representation $(\pi,u,H)$ of $A$ extends canonically to a representation $\phi:\F(G,A) \longrightarrow B(H)$ of $\F(G,A)$
determined by $\phi|_A= \pi$ and $\phi|_G = u$.
This correspondence between representations of $\F(G,A)$ and covariant representations of $A$ is a bijection.
\end{lemma}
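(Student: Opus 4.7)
The plan is to use the universal property of $\F(G,A)$ on the extension side and simply to read off the defining relations on the restriction side. For restriction, let $\phi:\F(G,A)\to B(H)$ be a $*$-homomorphism and set $\pi:=\phi|_A$ and $u_g:=\phi(g)$ for $g\in G$. Then $\pi$ is a $*$-homomorphism automatically, and the involutive structure of $\F(G,A)$ gives $u_g^*=\phi(g^*)$. Applying $\phi$ to (\ref{ident0})--(\ref{ident1}) yields $u_g u_h = u_{g\odot h}$ when defined, $u_g u_g^* u_g = u_g$, and by taking adjoints $u_g^* u_g u_g^* = u_g^*$; moreover $u_g u_g^*$ and $u_g^* u_g$ are self-adjoint. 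A short computation $\|u_g x\|^2 = \langle u_g^*u_g x,x\rangle \le \|x\|^2$ shows that $u_g$ is norm contractive, so Lemma \ref{characterizationlemmaPartIso} promotes $u_g$ to a partial isometry with inverse partial isometry $u_g^*$. This makes $g\mapsto u_g$ a morphism and $g\mapsto u_g^*$ the corresponding anti-morphism into ${\rm PartIso}(H)$, so $H$ is a $G$-Hilbert space (with trivial coefficient algebra $\C$). Applying $\phi$ to (\ref{ident1})--(\ref{ident2}) reproduces the four equivariance identities (\ref{equivariantRep1})--(\ref{equivariantRep4}), so $(\pi,u,H)$ is a covariant representation.

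For the extension direction, given a covariant representation $(\pi,u,H)$, the universal property of $\F(G,A)$ reduces the construction of $\phi$ to verifying that the assignments $a\mapsto\pi(a)$ for $a\in A$ and $g\mapsto u_g$ for $g\in G$ (together with the induced $g^*\mapsto u_g^*$) satisfy the defining relations of Definition \ref{definitionAlgCrossedProduct}. The morphism property of $u$ yields (\ref{ident0}); the partial-isometry identity $U_g = U_g U_g^* U_g$ from Definition \ref{definitionGHilbertModule} gives $gg^*g=g$; the two commutation clauses of (\ref{ident1}) are exactly (\ref{equivariantRep1})--(\ref{equivariantRep2}); and the two conjugation clauses of (\ref{ident2}) are exactly (\ref{equivariantRep3})--(\ref{equivariantRep4}). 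Hence $\phi$ exists, and is unique with $\phi|_A=\pi$ and $\phi|_G=u$.

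For the bijection, the two constructions are mutually inverse: restricting the extended $\phi$ returns $(\pi,u)$ on the nose, and conversely the extension of a restricted pair agrees with the original $\phi$ on the generating set $A\cup G$ of $\F(G,A)$ and thus everywhere. The entire argument is essentially bookkeeping — each defining relation in Definition \ref{definitionAlgCrossedProduct} is paired with exactly one clause of Definition \ref{definitionGHilbertModule} or \ref{definitionGstarEquivariance} — and the only step needing any real thought is deriving, in the restriction direction, that $u_g$ is genuinely a partial isometry, which is handled by Lemma \ref{characterizationlemmaPartIso}.
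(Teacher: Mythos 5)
Your proof is correct, and it is exactly the intended argument: the paper states this lemma without proof, treating it as immediate from the universal property of $\F(G,A)$, since the defining relations (\ref{ident0})--(\ref{ident2}) were set up precisely to mirror Definitions \ref{definitionGHilbertModule} and \ref{definitionGstarEquivariance} clause by clause. Your one substantive observation --- that in the restriction direction the algebraic relation $u_g u_g^* u_g = u_g$ together with contractivity (which holds for $u_g^*$ by the symmetric computation) upgrades $u_g$ to a partial isometry via Lemma \ref{characterizationlemmaPartIso}, so that $H$ genuinely becomes a $G$-Hilbert space --- is the right point to isolate, and the rest is the bookkeeping you describe.
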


By the last lemma it is often comfortable to work with {\em one} homomorphism $\phi$ rather than an equivariant representation.
A covariant representation of $A \rtimes_{\rm alg} G$ is then just a restriction of $\phi$. We have the following diagram
(where $\iota$ denotes the canonical embedding).
 
$$\begin{xy}
\xymatrix{
\F(G,A) \ar[drr]^{\phi}   & \\
A \rtimes_{\rm alg} G  \ar[u]_{\iota}  \ar[rr]^{(\phi|_A,\phi|_G,H)}_{\phi|_{A \rtimes_{\rm alg} G}} & & B(H)
}
\end{xy}$$





\section{Full crossed products}

%

\begin{definition}
{\rm
Let $(\pi,u,H)$ be a $G$-covariant representation of a $G$-Hilbert $C^*$-algebra $A$
and $\phi$ its induced representation on $\F(G,A)$.
The $C^*$-algebra $A \rtimes_{(\pi,u,H)} G$ {\em induced by this covariant representation} is the norm closure
of $\phi (A \rtimes_{\rm alg} G)$.
}
\end{definition}


\begin{definition}
{\rm
The {\em universal covariant representation} of $A$ is the direct sum of all covariant representations of $A$.
(Actually, we choose one Hilbert space of sufficient large cardinality
which can carry all possible representations of $\F(G,A)$ up to equivalence
(meaning that the images of $\F(G,A)$ under two representations are canonically isometrically isomorphic)
and allow only representations on this Hilbert space.)

}
\end{definition}

\begin{definition}
{\rm
The {\em full crossed product} $A \rtimes G$
is the $C^*$-algebra induced by the universal covariant representation of $A$.
}
\end{definition}

Equivalently, $A \rtimes G$ is the norm closure of the image $\phi^\infty(A \rtimes_{\rm alg} G)$ of the universal
representation $\phi^\infty$ of $\F(G,A)$.
Bearing Lemma \ref{lemmaBijectionCovRepF} in mind, by an abuse of language we may also call $\phi^{\infty}$ a covariant representation of $A$.


\begin{lemma} \label{lemmaCovRepInducedMap}
Let $\phi^\infty$ be the universal covariant representation of $A$.
If $\phi$ is another covariant representation of $A$ then
there is a homomorphism $\sigma: A \rtimes G \longrightarrow A \rtimes_\phi G$
such that $\sigma \phi^\infty(x) = \phi(x)$ for all $x \in A \rtimes_{\rm alg} G$.
$$\begin{xy}
\xymatrix{
A \rtimes_{\rm alg} G  \ar[r]^{\phi^\infty} \ar[rd]_{\phi}  &
A \rtimes G  \ar[d]^{\sigma}\\
 &  A \rtimes_{\phi} G
}
\end{xy}$$
\end{lemma}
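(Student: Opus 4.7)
The plan is to exploit the fact that, by the very construction of the universal covariant representation, any given covariant representation $\phi$ is (unitarily equivalent to) a direct summand of $\phi^\infty$, and then to define $\sigma$ by the corresponding compression/restriction.

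First I would pass via Lemma \ref{lemmaBijectionCovRepF} from covariant representations of $A$ to $*$-homomorphisms out of $\F(G,A)$, so that both $\phi$ and $\phi^\infty$ become honest $*$-homomorphisms $\F(G,A) \to B(H)$, $\F(G,A) \to B(H^\infty)$. Next I would locate $\phi$ inside $\phi^\infty$: the definition of the universal representation says that after choosing a Hilbert space of sufficiently large cardinality one collects (up to canonical isometric isomorphism of images) every representation of $\F(G,A)$. Concretely, this yields an isometry $V: H \to H^\infty$ together with a projection $p = V V^* \in B(H^\infty)$ lying in the commutant of $\phi^\infty(\F(G,A))$, such that $V^* \phi^\infty(y) V = \phi(y)$ for every $y \in \F(G,A)$.

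From this the required contractivity follows immediately: for every $x \in A \rtimes_{\rm alg} G$ one has
\[
\|\phi(x)\| = \|V^* \phi^\infty(x) V\| \le \|\phi^\infty(x)\|.
\]
Consequently the set-theoretic map
\[
\sigma_0 : \phi^\infty(A \rtimes_{\rm alg} G) \longrightarrow \phi(A \rtimes_{\rm alg} G), \qquad \phi^\infty(x) \longmapsto \phi(x),
\]
is well defined (its kernel, $\ker \phi^\infty \cap A \rtimes_{\rm alg} G$, is contained in $\ker \phi$), is a $*$-homomorphism between $*$-algebras, and is norm-decreasing. Since $A \rtimes G$ is by definition the completion of $\phi^\infty(A \rtimes_{\rm alg} G)$ and $A \rtimes_\phi G$ is the completion of $\phi(A \rtimes_{\rm alg} G)$, the map $\sigma_0$ extends uniquely by continuity to a $*$-homomorphism $\sigma : A \rtimes G \to A \rtimes_\phi G$, and by construction $\sigma \circ \phi^\infty = \phi$ on $A \rtimes_{\rm alg} G$.

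The only step that requires a bit of care, and which I regard as the main obstacle, is the very first one: making precise the sense in which $\phi$ is a subrepresentation of $\phi^\infty$. The universal representation was only defined up to the equivalence relation identifying representations whose images are canonically isometrically isomorphic, so one has to extract from this an actual isometry $V$ implementing the embedding on $\F(G,A)$ (equivalently, one argues directly that $\|\phi^\infty(x)\|$ is the supremum over all covariant representations of $\|\phi(x)\|$, so the inequality $\|\phi(x)\| \le \|\phi^\infty(x)\|$ holds by definition). Once this is unpacked, the rest is the standard $C^*$-completion argument.
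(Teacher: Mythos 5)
Your proposal is correct and follows essentially the same route as the paper: the paper's proof simply observes that $\phi^\infty$, being the direct sum over all covariant representations, dominates $\phi$ in norm at every $x \in A \rtimes_{\rm alg} G$, whence the map $\phi^\infty(x) \mapsto \phi(x)$ is well defined and contractive on the dense subalgebra and extends by continuity. Your extra care about extracting an isometry $V$ is fine but not needed, since (as you note yourself at the end) the norm inequality already follows directly from the definition of the universal representation as a direct sum.
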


\begin{proof}
This is clear as $\phi^\infty$ is the direct sum over all representations of $\F(G,A)$, so is larger or equal in norm
in every point $x$ than $\phi$.
\end{proof}


Note that the above full crossed product is for proper semimultiplicicative sets, and so there are differences to
existing crossed products if one considers special categories. Let $(\pi,U,H)$ be a covariant representation
of a $G$-Hilbert $C^*$-algebra $A$. If $G$ is a discrete group, then $U_g U_g^* = U_g^* U_g=U_e$ for all $g \in G$ by Lemma
\ref{corollaryHilbertactionInverses},
but this need not be a unit (we may resolve this difference by requiring $U_e=1$, as optionally done in Sections
\ref{sectionDescent}-\ref{sectionDescent3}).
If $G$ is an inverse semigroup, our crossed product differs from Sieben's crossed product \cite{sieben1997} which
is based on strictly covariant representations in the sense
that $U_g \pi(a) U_g^*= \pi(g(a))$. We are however consistent with Khoshkam--Skandalis' definition \cite{1061.46047},
see Lemma \ref{lemmaInvSemigroupKhoshkam}. The precise difference between the latter two crossed products is clarified in \cite{1061.46047}.
If $G$ is a semigroup, then in the existing definitions a semigroup covariant representation consists of isometries $U_g$
which strictly covariantly intertwine the $G$-action, 
see Stacey \cite{stacey}, Murphy \cite{murphy}, Laca \cite{laca2000} and Larsen \cite{larsen}.
Stacey even allows a family of isometries for representations of different multiplicities.
The crossed product of $\N$ by surjective shift maps on $\{0,1\}^\N$ degenerates to $0$ according to Stacey in \cite[Example 2.1(a)]{stacey}
(this affects any crossed product construction induced by strictly covariantly intertwining isometries)
but there is an obvious non-degnerate covariant representation on $B(\ell^2(\{0,1\}^\N))$ in our sense.
In all constructions of this paragraph the full crossed product is (roughly speaking) the enveloping $C^*$-algebra
of the respective class of equivariant representations. 
%



If $\calg$ is a discrete groupoid then $gh =0$ in the groupoid $C^*$-algebra if $g$ and $h$ are incomposeable ($g,h \in \calg$).
Taking into account such an approach to the crossed product, we consider such a variant also for semimultiplicative sets.


\begin{definition}
{\rm
Let $G$ be a general semimultiplicative set.
A covariant representation $(\pi,u,H)$ is called {\em strong} if $u_{g} u_h=0$ for all incomposable pairs $g,h \in G$.
The {\em full strong crossed product} $A \rtimes_{\rm s} G$
is the $C^*$-algebra induced by the universal strong $G$-covariant representation of $A$.
}
\end{definition}

A similar lemma as Lemma \ref{lemmaCovRepInducedMap} holds also for the strong crossed product and the strong covariant representations.

For a semigroup $S$ there exists a crossed product where the ac


\section{Reduced crossed products}

In this section we shall assume that $G$ is an associative semimultiplicative set with left cancellation.
Let $\rho$ be the injective $G$-action on $G$ given by left multiplication ($\rho_g( h) = gh$ in $G$).
It can be extended to an injective $G^*$-action on $G$ (also denoted by $\rho$) by Lemma \ref{lemmaActionGstar}. 
$\rho$ induces an action $\lambda: G \longrightarrow B(\ell^2(G))$ (Definition \ref{defLeftRegRep}).
This action is an action under which $\ell^2(G)$ becomes a $G$-Hilbert space (i.e. a $G$-Hilbert module over $\C$). We shall regard $\ell^2(G)$
as a $G$-Hilbert module (if nothing else is said).
We may extend this action
to a $G^*$-action, and denote this extension also by $\lambda$
(and it is the same action as the extended $\rho$ would induce).
For arbitrary $g$ in $G^*$ and arbitrary $h$ in $G$ we use the abbreviation
$$e_{g h} := \lambda_g(e_h).$$


%
%
%

\begin{definition}
{\rm
If $G$ has left cancellation then a $G$-action $U$ on a $G$-Hilbert module $\cale$
is said to have {\em transferred left cancellation} if
$U_{g}^* U_g U_h = U_h$ for all $g,h \in G$ for which $g h$ is defined.
}
\end{definition}

The last definition is understood to include $G$-Hilbert $C^*$-algebras
(which are special $G$-Hilbert modules).
By sloppy language we shall also say that a $G$-Hilbert module has transferred left cancellation
(rather than the $G$-action itself).

If $G$ is a semigroupoid then $\lambda$ has transferred left cancellation.
Indeed, assume $gh$ is defined and $x \in G$. Since $G$ is a semigroupoid and $gh$ is defined, $(gh)x$ is defined
if and only if $hx$ is defined. Thus $\lambda_{g}^* \lambda_g \lambda_h(e_x)= \lambda_h(e_x)$.

\begin{lemma}   \label{lemmaTransferredILM}
A $G$-action $U$ has transferred left cancellation if and only if for all $g \in G^*$ and all $h \in G$ one has
$U_{g h} = U_{\rho_g(h)}$ whenever $\rho_g(h)$ is defined
(note that $gh \in G^*$ but $\rho_g(h) \in G$).
\end{lemma}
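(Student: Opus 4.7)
The plan is to prove both implications separately, using that both $\rho$ and $U$ extend to $G^*$ (Lemmas \ref{lemmaActionGstar} and \ref{lemmaHilbertmoduleGstar}) and being careful to invoke the morphism property only where products are defined in $G$.

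For $(\Leftarrow)$: suppose $g,h \in G$ with $gh$ defined in $G$, and set $k = gh \in G$. By construction of the extension of $\rho$ to $G^*$, the inverse partial function $\rho_{g^*}$ sends $k$ to $h$, so $\rho_{g^*}(k) = h$ is defined. Applying the hypothesis to $g^* \in G^*$ and $k \in G$ gives $U_{g^* k} = U_{\rho_{g^*}(k)} = U_h$. On the other hand, using the elementary equivalence $g \odot h = gh$ and the morphism property of $\hat U$ on $G^*$, we compute $U_{g^* k} = U_g^* U_{gh} = U_g^* U_g U_h$. Comparing yields $U_g^* U_g U_h = U_h$, which is exactly transferred left cancellation.

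For $(\Rightarrow)$: induct on the word length $n$ of a representative $g = g_1^{\epsilon_1} \cdots g_n^{\epsilon_n} \in F(G)$. For $n=1$ with $g=g_1 \in G$ and $\rho_g(h)=g_1 h$ defined in $G$, the elementary equivalence $g_1 \odot h = g_1 h$ gives $U_{gh} = U_{g_1 h} = U_{\rho_g(h)}$ immediately. For $n=1$ with $g=g_1^*$ and $\rho_{g_1^*}(h) = h'$ defined in $G$, one has $g_1 h' = h$ by definition of the inverse partial function, so
$$U_{gh} = U_{g_1}^* U_h = U_{g_1}^* U_{g_1} U_{h'} = U_{h'} = U_{\rho_g(h)},$$
where the second equality uses $U_{g_1 h'} = U_{g_1} U_{h'}$ (morphism, since $g_1 h'$ is defined in $G$) and the third is transferred left cancellation applied to the pair $(g_1, h') \in G \times G$. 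For the inductive step, factor $g = g' g_n^{\epsilon_n}$ with $g'$ of shorter word length; since $\rho_g(h) = \rho_{g'}(\rho_{g_n^{\epsilon_n}}(h))$ is defined in $G$, both $h'' := \rho_{g_n^{\epsilon_n}}(h) \in G$ and $\rho_{g'}(h'') \in G$ are defined, and then $U_{gh} = U_{g'} U_{g_n^{\epsilon_n} h} = U_{g'} U_{h''} = U_{g' h''} = U_{\rho_{g'}(h'')} = U_{\rho_g(h)}$ by the base case followed by the inductive hypothesis applied to $g'$ and $h''$.

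The one genuine use of transferred left cancellation is in the base case with $g = g_1^*$; everything else is bookkeeping between the partial function $\rho$ on $G$ and the morphism $\hat U$ on $G^*$. The main obstacle to watch out for is ensuring, at each induction step, that the intermediate value $h'' = \rho_{g_n^{\epsilon_n}}(h)$ actually lies in $G$ (not just in $G^*$) so that the inductive hypothesis is applicable; this is automatic from the assumption that $\rho_g(h) \in G$ is defined, because the partial functions are computed one generator at a time and the chain terminates in $G$.
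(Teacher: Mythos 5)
Your proof is correct and follows essentially the same route as the paper: both directions are handled by an induction on word length in $G^*$, with transferred left cancellation invoked exactly when a starred generator appears, and the converse obtained by applying the hypothesis to a short word built from $g$ and $g^*$. The only (cosmetic) differences are that you peel off the rightmost letter where the paper peels off the leftmost, and you apply the hypothesis to $(g^*, gh)$ where the paper uses $(g^*g, h)$.
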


\begin{proof}
Assume the condition holds true. If $\rho_g(h)$ exists for $g,h \in G$ then $\rho_g^* \rho_g(h) = h$ (Lemma \ref{lemmaActionGstar}).
Consequently $U_h = U_{\rho_{g^* g}(h)} = U_{g^* g h}$ by assumption. Thus $U$ has transferred left cancellation.
Assume that $U$ has tranferred left cancellation and
by induction hypothesis on the length of $g$ that $U_{\rho_g(h)} = U_{g h}$, where $g \in G^*, h \in G$ and $\rho_g(h)$ is defined.
Suppose that $t \in G$ and $\rho_{t^* g}(h)$ is defined. Then $gh=\rho_{t t^* g} (h) = \rho_t(\rho_{t^* g}(h)) = \rho_t(x)$ for $x := \rho_{t^* g}(h)$.
Since $U$ has transferred left cancellation, $U_t^* U_t U_x=U_x$. Hence,
$U_{\rho_{t^* g}(h)} = U_{x} = U_{t^* t x}= U_{t^* g h}$. This proves the inductive step.
On the other hand, if $\rho_{tg}(h)$ is defined, then $U_{\rho_{tg}(h)}= U_{\rho_t (\rho_g(h))} = U_{t (\rho_g(h))}
=U_{t} U_{\rho_g(h)} = U_{t} U_{gh}=U_{tgh}$, proving the inductive step again.
\end{proof}

\begin{definition} \label{definitionLeftRegular}
{\rm
Suppose that $A$ is a $G$-Hilbert $C^*$-algebra,
$G$ is associative with left cancellation, and $A$ has transferred left cancellation.
Let $\sigma:A \longrightarrow B(H)$ be a faithful nondegenerate representation (without $G$-action)
of $A$ on a Hilbert space $H$.
The {\em left reduced crossed product} $A \rtimes_{r} G$ is the $C^*$-algebra
induced by the {\em left regular} covariant representation $(\pi,u, H \otimes \ell^2(G))$ of $A$
given by
\begin{eqnarray*}
\pi(a) (\xi_h \otimes e_h)  &=& \sigma \big (h^*(a) \big) \xi_h  \otimes e_h,\\
u(g) (\xi_h \otimes  e_h)  &=&  \xi_h \otimes \lambda_g(e_{h})
\end{eqnarray*}
for all $a \in A , \xi_h \in H$ and $g, h \in G$.
}
\end{definition}


\begin{lemma}
The left regular representation (Definition \ref{definitionLeftRegular}) is indeed covariant.
\end{lemma}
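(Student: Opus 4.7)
The plan is to verify the four equivariance conditions of Definition \ref{definitionGstarEquivariance} for the triple $(\pi, u, H \otimes \ell^2(G))$, after first checking that $\pi$ is a well defined bounded $*$-homomorphism and that $u$, equipped with the inverse partial isometries $u_g^* = \mathrm{id}_H \otimes \lambda_g^*$ guaranteed by Corollary \ref{GactionConsistentPartialIso}, renders $H \otimes \ell^2(G)$ a $G$-Hilbert space. Since $\lambda$ already makes $\ell^2(G)$ a $G$-Hilbert $\C$-module, tensoring with the trivial action on $H$ immediately yields the relations (\ref{hilbmodrel1})--(\ref{hilbmodrel4}) with $B=\C$. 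That $\pi$ is a $*$-homomorphism is immediate: with respect to the orthogonal decomposition $H \otimes \ell^2(G) = \bigoplus_{h \in G} H \otimes \C e_h$, $\pi(a)$ acts on the $h$-th summand by $\sigma(\alpha_h^*(a)) \otimes 1$, which is $*$-homomorphic in $a$, and the uniform bound $\|\sigma(\alpha_h^*(a))\| \le \|a\|$ follows from contractivity of partial isometries.

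For the commutation relations (\ref{equivariantRep1}) and (\ref{equivariantRep2}), note that both $u_g u_g^*$ and $u_g^* u_g$ act diagonally with respect to the decomposition $\bigoplus_h H \otimes \C e_h$, scaling the $h$-th summand by $0$ or $1$: $u_g u_g^*$ by $1$ exactly when $h$ lies in the range of $\lambda_g$, and $u_g^* u_g$ by $1$ exactly when $g h$ is defined. Since $\pi(a)$ also acts diagonally with respect to this decomposition, commutation is immediate.

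The crucial condition is (\ref{equivariantRep3}). Applied to a basis element $\xi \otimes e_h$, both sides vanish unless $h$ is in the range of $\lambda_g$, i.e.\ $h = g k$ for a $k \in G$ which is unique by left cancellation. In that case a direct calculation gives
\begin{eqnarray*}
u_g \pi(a) u_g^* (\xi \otimes e_{gk}) &=& \sigma\bigl(\alpha_k^*(a)\bigr)\, \xi \otimes e_{gk}, \\
\pi(g(a))\, u_g u_g^* (\xi \otimes e_{gk}) &=& \sigma\bigl(\alpha_{gk}^*(\alpha_g(a))\bigr)\, \xi \otimes e_{gk}.
\end{eqnarray*}
Since $\sigma$ is faithful, agreement reduces to the identity $\alpha_k^* = \alpha_k^* \alpha_g^* \alpha_g$ on $A$, which is where transferred left cancellation of $A$ enters. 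Applying Lemma \ref{lemmaTransferredILM} to the action $\alpha$ on $A$, the hypothesis that $g k$ is defined gives $\alpha_{g^* g k} = \alpha_k$, and passing to inverse partial isometries via the $*$-morphism property of Lemma \ref{lemmaHilbertCstarGstar}(i) yields $\alpha_{k^* g^* g} = \alpha_{k^*}$, which is precisely the required identity. This is the main technical point of the proof and the only place where the transferred left cancellation hypothesis is used.

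Condition (\ref{equivariantRep4}) is then routine: both sides vanish unless $g h$ is defined, and in that case they reduce to $\sigma(\alpha_{gh}^*(a))\, \xi \otimes e_h$ and $\sigma(\alpha_h^*(\alpha_g^*(a)))\, \xi \otimes e_h$ respectively, which coincide by the anti-morphism property $\alpha_{gh}^* = \alpha_h^* \alpha_g^*$ built into Definition \ref{definitionHilbertAlgebra}.
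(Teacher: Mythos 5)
Your proof is correct and takes essentially the same route as the paper: the paper also verifies the covariance conditions on basis vectors $\xi\otimes e_h$, with the key step being exactly your identity $\alpha_{k}^{*}\alpha_g^{*}\alpha_g=\alpha_k^{*}$ coming from transferred left cancellation. The only cosmetic difference is that the paper runs the computation for (\ref{equivariantRep3}) uniformly over $g\in G^{*}$ (writing $h=\rho_g(k)$ and invoking Lemma \ref{lemmaTransferredILM}), whereas you treat $g\in G$, which already suffices for Definition \ref{defCovHil} since the $G^{*}$-version then follows from Lemma \ref{lemmaHilbertCstarGstar}(ii); you are also more explicit than the paper about conditions (\ref{equivariantRep1}), (\ref{equivariantRep2}) and (\ref{equivariantRep4}), which the paper leaves to the reader.
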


\begin{proof}
We need to check Defintion \ref{definitionGstarEquivariance} and demonstrate only (\ref{equivariantRep3}).
Let $\hat \alpha$ denote the $G^*$-action on $A$.
By Lemma \ref{lemmaHilbertCstarGstar} (i) and Lemma \ref{lemmaTransferredILM} we have
\begin{eqnarray*}
&& u_g \pi(a) u_g^* (\xi \otimes e_h) \,\,=\,\, u_g \pi(a) (\xi \otimes e_{\rho_{g^*}(h)})  \,\,=\,\, 
u_g\, \big(\sigma\big(\hat \alpha_{\rho_{g^*}(h)}^*(a) \big)\xi \otimes e_{\rho_{g^*}(h)} \big ) \\
&=& u_g\, \big(\sigma\big(\hat \alpha_{g^* h}^*(a) \big)\xi \otimes e_{\rho_{g^*}(h)} \big)
\,\,=\,\, \sigma\big(\hat \alpha_{h^* g}(a) \big)\xi \otimes e_{\rho_{g g^*}(h)}\\
&=& \sigma\big(\hat \alpha_{h^* g g^* g}(a) \big)\xi \otimes e_{\rho_{g g^*}(h)}
\,\,=\,\, \pi(g(a)) u_g u_g^* (\xi \otimes e_h)
\end{eqnarray*}
for all $g \in G^*$ and $h \in G$.
\end{proof}

Obviously, $u$ of Definition \ref{definitionLeftRegular} is the diagonal $G$-action $1 \otimes \lambda$.
We are going to show that the definition of $A \rtimes_r G$ is actually independent of $\sigma$.

We shall recall three lemmas which can all be found in Kasparov \cite{kasparov1981}, pages 522-523.
Only Lemma \ref{lemmakuerzetensor} is somewhat extended (cf. Lance \cite[Proposition 2.1]{lance}).

\begin{lemma} \label{lemmakuerzetensor}
Let $X$ be a Hilbert module, $A$ a $C^*$-algebra and
$\pi: A \longrightarrow \call(X)$ a non-degenerate homomorphism. 
Then there is an isomorphism
$$\rho: A \otimes_{A} X \longrightarrow X : \rho(a \otimes x ) = \pi(a) x.$$
If $T \in \call(A)$ then $T \otimes 1 = \rho^{-1} \hat \pi(T) \rho$,
where $\hat \pi: \call(A) \longrightarrow \call(X)$ denotes the strictly continuous extension of $\pi$.
\end{lemma}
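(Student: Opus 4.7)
The plan is to prove the two assertions in turn: first construct $\rho$ as an isomorphism of Hilbert modules, then verify the intertwining formula for $T \otimes 1$.

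For the first part, I would begin on the algebraic tensor product $A \odot X$ (unbalanced) and check that the assignment $a \otimes x \mapsto \pi(a) x$ respects the balancing relation $ab \otimes x \sim a \otimes \pi(b) x$, which is immediate since $\pi$ is a homomorphism. This gives a linear map $\rho_0: A \odot_A X \to X$. The next step is to verify that $\rho_0$ preserves the $B$-valued inner product, where $B$ is the coefficient algebra of $X$: indeed
\[
\langle a \otimes x, b \otimes y \rangle_{A \otimes_A X} \;=\; \langle x, \pi(\langle a, b\rangle_A) y\rangle \;=\; \langle x, \pi(a^* b) y\rangle,
\]
while on the image side
\[
\langle \pi(a) x, \pi(b) y\rangle \;=\; \langle x, \pi(a)^* \pi(b) y\rangle \;=\; \langle x, \pi(a^* b) y\rangle,
\]
so the two match. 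From preservation of the inner product I would deduce that $\rho_0$ is well-defined and isometric for the pre-Hilbert norms (in particular any algebraic null-vectors on the tensor side are annihilated), hence extends by continuity to an isometric adjointable map $\rho: A \otimes_A X \to X$. Right $B$-linearity is clear from the formula.

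For surjectivity I would invoke the non-degeneracy of $\pi$: by Cohen factorization, or more elementarily by choosing a bounded approximate unit $(u_\lambda)$ of $A$, we have $\pi(u_\lambda) x \to x$ for every $x \in X$, so $\pi(A) X$ is dense in $X$, and $\rho$ has dense range. Combined with isometry this gives the isomorphism. The adjoint of $\rho$ is then $\rho^{-1}$, and it sends $x \in X$ (approximated by $\pi(u_\lambda) x$) to $\lim u_\lambda \otimes x$.

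For the second part, I would first recall that $\call(A)$, viewing $A$ as a Hilbert module over itself, is just the multiplier algebra $\calm(A)$, and that the strictly continuous extension $\hat\pi: \calm(A) \to \call(X)$ exists precisely because $\pi$ is non-degenerate. For $T \in \call(A)$ and $a \in A$ the identity $\pi(Ta) = \hat\pi(T)\pi(a)$ holds by construction of $\hat\pi$ (or by strict-continuous extension of the identity $\pi(ba)=\pi(b)\pi(a)$). Then on elementary tensors
\[
\rho \bigl((T \otimes 1)(a \otimes x)\bigr) \;=\; \rho(Ta \otimes x) \;=\; \pi(Ta) x \;=\; \hat\pi(T) \pi(a) x \;=\; \hat\pi(T) \rho(a \otimes x),
\]
so $\rho (T \otimes 1) = \hat\pi(T) \rho$, and the claimed formula follows by applying $\rho^{-1}$ on the right.

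The main obstacle is essentially notational: one must keep track of the fact that the $A$-valued inner product on $A$ viewed as a Hilbert module is $\langle a,b\rangle_A = a^* b$, and that the balancing relation uses the right action of $A$ on itself coupled to the left action $\pi$ on $X$. Once that bookkeeping is fixed, the isometry computation and the density argument are routine, and the extension to multipliers goes through the standard non-degeneracy trick of multiplying by an approximate unit.
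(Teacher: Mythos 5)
Your proof is correct and is exactly the standard argument; the paper itself gives no proof of this lemma but simply cites Kasparov (1981, pp.~522--523) and Lance's Proposition 2.1, and your inner-product-preservation plus non-degeneracy/approximate-unit argument is precisely the proof found there. Nothing is missing.
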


\begin{lemma} \label{lemmaActingOnSpaceTensor}
If $X$ and $H$ are Hilbert modules over $C^*$-algebras $B_1$ and $B_2$, respectively, and $B_1 \rightarrow \call(H)$ is an injective homomorphism
then $\mu: \call(X) \rightarrow \call(X \otimes_{B_1} H)$, $\mu(T) = T \otimes 1$
is an injective homomorphism.
\end{lemma}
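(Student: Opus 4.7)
The plan is to check in turn that $\mu$ is well-defined, that it is a $*$-homomorphism, and finally that it is injective; injectivity will be the main content, and it is where the hypothesis that $B_1 \to \call(H)$ is injective enters.

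First I would verify that for each $T \in \call(X)$ the formula $T \otimes 1: x \otimes h \mapsto Tx \otimes h$ on simple tensors extends to a well-defined adjointable operator on the interior tensor product $X \otimes_{B_1} H$, with adjoint given by $T^* \otimes 1$. The essential point is respect for the $B_1$-balance relations: for $b \in B_1$,
\[
T(xb) \otimes h = (Tx) b \otimes h = Tx \otimes \pi(b) h,
\]
where $\pi: B_1 \to \call(H)$ denotes the given homomorphism; this is exactly the defining relation in the interior tensor product. Boundedness and the existence of $T^* \otimes 1$ as an adjoint on simple tensors is then standard (cf. Lance).

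Granted well-definedness, the homomorphism properties $\mu(TS) = \mu(T) \mu(S)$, $\mu(T^*) = \mu(T)^*$, and $\C$-linearity follow immediately by checking on simple tensors.

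The remaining step, which is the crux, is injectivity. Suppose $\mu(T) = T \otimes 1 = 0$. For any $x \in X$ and $h \in H$, the inner product on the interior tensor product gives
\[
0 = \langle (T \otimes 1)(x \otimes h),\, (T \otimes 1)(x \otimes h)\rangle = \langle h,\, \pi(\langle Tx, Tx\rangle)\, h\rangle.
\]
Since this holds for every $h \in H$ and $\pi(\langle Tx, Tx\rangle)$ is positive in $\call(H)$, we conclude $\pi(\langle Tx, Tx\rangle) = 0$. Here the hypothesis enters decisively: an injective $*$-homomorphism between $C^*$-algebras is automatically isometric, so $\pi$ is isometric on $B_1$, forcing $\langle Tx, Tx\rangle = 0$ and hence $Tx = 0$. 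As $x \in X$ was arbitrary, $T = 0$.

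The only mildly delicate point is that $\pi$ is not assumed non-degenerate, so one must be careful not to invoke Lemma \ref{lemmakuerzetensor}; the argument above avoids this by working directly with the inner product formula on the interior tensor product, for which only isometricity of $\pi$ on $B_1$ is needed.
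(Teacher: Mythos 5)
Your proof is correct. The paper itself gives no proof of this lemma (it only cites Kasparov \cite{kasparov1981}, pp.~522--523), and your argument --- well-definedness via the balancing relation, then injectivity via the interior-tensor-product inner product formula $\langle Tx\otimes h, Tx\otimes h\rangle=\langle h,\pi(\langle Tx,Tx\rangle)h\rangle$ together with faithfulness of $\pi$ --- is exactly the standard one; note that plain injectivity of $\pi$ already suffices at the last step, so the appeal to isometricity is not even needed.
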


\begin{lemma} \label{lemmaTensorSkewwithInterior}
If $E_1, \ldots, E_4$ are Hilbert $B_i$-modules and $B_1 \rightarrow \call(E_3), B_2 \rightarrow \call(E_4)$
are homomorphisms then
$$(E_1 \otimes E_2) \otimes_{B_1 \otimes B_2} (E_3 \otimes E_4) \cong (E_1 \otimes_{B_1} E_3) \otimes  (E_2 \otimes_{B_2} E_4).$$
\end{lemma}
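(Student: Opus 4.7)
The plan is to construct the isomorphism on algebraic tensor products of elementary tensors and then extend by continuity after verifying inner product preservation.

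First, I would define a map
\[
\Phi: (E_1 \otimes E_2) \otimes_{B_1 \otimes B_2} (E_3 \otimes E_4) \longrightarrow (E_1 \otimes_{B_1} E_3) \otimes (E_2 \otimes_{B_2} E_4)
\]
on elementary tensors by
\[
\Phi\bigl((e_1 \otimes e_2) \otimes (e_3 \otimes e_4)\bigr) = (e_1 \otimes_{B_1} e_3) \otimes (e_2 \otimes_{B_2} e_4),
\]
extended bilinearly. The first task is to check that this descends from the algebraic tensor product to the balanced tensor product over $B_1 \otimes B_2$. Concretely, for $b_i \in B_i$ one must verify
\[
\Phi\bigl((e_1 b_1 \otimes e_2 b_2) \otimes (e_3 \otimes e_4)\bigr) = \Phi\bigl((e_1 \otimes e_2) \otimes (b_1 e_3 \otimes b_2 e_4)\bigr),
\]
and both sides equal $(e_1 \otimes_{B_1} b_1 e_3) \otimes (e_2 \otimes_{B_2} b_2 e_4)$ using the balancing in each of the two inner tensor products on the right. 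Since elements of the form $b_1 \otimes b_2$ generate $B_1 \otimes B_2$ and the balancing relation is continuous and linear, this handles all balancing relations.

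Next I would check that $\Phi$ preserves inner products on elementary tensors. Using the standard formulae for inner products in external and internal tensor products, one gets
\begin{align*}
\langle (e_1 \otimes e_2) \otimes (e_3 \otimes e_4),\, (e_1' \otimes e_2') \otimes (e_3' \otimes e_4') \rangle
&= \bigl\langle e_3,\, \langle e_1, e_1'\rangle\, e_3' \bigr\rangle \otimes \bigl\langle e_4,\, \langle e_2, e_2'\rangle\, e_4' \bigr\rangle,
\end{align*}
and the same expression arises on the right-hand side after expanding $\langle e_1 \otimes_{B_1} e_3, e_1' \otimes_{B_1} e_3'\rangle \otimes \langle e_2 \otimes_{B_2} e_4, e_2' \otimes_{B_2} e_4'\rangle$. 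Thus $\Phi$ is isometric on the algebraic pre-Hilbert module and therefore extends uniquely to an isometry between the completions.

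Finally, for surjectivity, the image contains all elementary tensors $(e_1 \otimes_{B_1} e_3) \otimes (e_2 \otimes_{B_2} e_4)$, whose linear span is dense in $(E_1 \otimes_{B_1} E_3) \otimes (E_2 \otimes_{B_2} E_4)$; hence $\Phi$ is an isomorphism of Hilbert $B_3 \otimes B_4$-modules. The main technical obstacle is the very first step: ensuring $\Phi$ is well-defined on the balanced tensor product, since the balancing on the left uses the diagonal bimodule structure while the right factors that balancing into two independent balancings. Once this bookkeeping is settled, the remaining inner-product and density verifications are routine.
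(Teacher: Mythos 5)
Your proposal is correct. Note that the paper itself gives no proof of this lemma: it is quoted from Kasparov's 1981 Stinespring paper (pp.~522--523), and your argument is exactly the standard one found there and in Lance's book, so there is no divergence of method to report. One small remark on economy: since the interior tensor product $X \otimes_B Y$ is by definition the completion of the algebraic tensor product modulo the null space of the semi-inner product $\langle x \otimes y, x' \otimes y' \rangle = \langle y, \langle x,x'\rangle y'\rangle$, your inner-product computation already shows that $\Phi$ annihilates that null space; the separate verification of the balancing relations for elementary tensors $b_1 \otimes b_2$ (and the slightly soft appeal to ``continuity and linearity'' to cover general elements of $B_1 \otimes B_2$) is therefore not needed. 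Defining $\Phi$ on the algebraic tensor product $E_1 \odot E_2 \odot E_3 \odot E_4$, checking the inner-product identity there, and passing to quotients and completions is the cleanest route, and your computation of the two inner products is the entire content of the proof.
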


For a $G$-Hilbert $C^*$-algebra $A$ let $A \otimes \ell^2(G)$ denote the skew tensor product of $G$-Hilbert modules.
We make it a $G$-Hilbert module over $A \otimes \C \cong A$ under the diagonal action
$1 \otimes \lambda$.

\begin{lemma}  \label{lemmaLeftRegActionHilbertModule}
Consider the setting of Definition \ref{definitionLeftRegular}.
There is an injective
$*$-homomorphism
$$\zeta:A \rtimes_r G \longrightarrow \call(A \otimes \ell^2(G))$$
induced by the covariant representation $\phi:A \rtimes_{\rm alg} G \longrightarrow \call(A \otimes \ell^2(G))$
given by
\begin{eqnarray*}
\phi(a) (x_h \otimes e_h) &=& h^*(a)x_h \otimes e_h,\\
\phi(g) &=& 1 \otimes \lambda_g,
\end{eqnarray*}
for all $a, x_h \in A$ and $g,h \in G$.
%
%
\end{lemma}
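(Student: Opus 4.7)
The plan is to (1) verify that $\phi$ defines a $G$-equivariant covariant representation of $A$ on the $G$-Hilbert $A$-module $A \otimes \ell^2(G)$, and then (2) show that $\phi$ and the left regular covariant representation of Definition \ref{definitionLeftRegular} induce the same norm on $A \rtimes_{\rm alg} G$, so that the induced $*$-homomorphism factors through an isometric injection $\zeta$.

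For step (1), the formulas for $\phi(a)$ and $\phi(g) = 1_A \otimes \lambda_g$ on $A \otimes \ell^2(G)$ are formally identical to those of the left regular covariant representation on $H \otimes \ell^2(G)$, with the faithful representation $\sigma: A \to B(H)$ replaced by the left multiplication action of $A$ on itself. Hence the relations (\ref{equivariantRep1})--(\ref{equivariantRep4}) of Definition \ref{definitionGstarEquivariance} are established by the very same computation as in the preceding lemma, which relies only on Lemma \ref{lemmaHilbertCstarGstar} to extend the $G$-action to $G^*$ and on Lemma \ref{lemmaTransferredILM} to replace $\hat\alpha_{\rho_{g^*}(h)}$ by $\hat\alpha_{g^* h}$ via transferred left cancellation. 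By Lemma \ref{lemmaBijectionCovRepF} this produces a $*$-homomorphism, still denoted $\phi$, from $A \rtimes_{\rm alg} G$ into $\call(A \otimes \ell^2(G))$.

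For step (2), I would apply the standard tensor product trick. Combining Lemma \ref{lemmaTensorSkewwithInterior} (with $E_1 = A$, $E_2 = \ell^2(G)$, $E_3 = H$, $E_4 = \C$, $B_1 = A$, $B_2 = \C$) with Lemma \ref{lemmakuerzetensor} (identifying $A \otimes_A H$ with $H$ via $a \otimes \xi \mapsto \sigma(a)\xi$) yields a canonical unitary
\[
\Phi: (A \otimes \ell^2(G)) \otimes_A H \;\longrightarrow\; H \otimes \ell^2(G), \qquad (x \otimes e_h) \otimes \xi \mapsto \sigma(x)\xi \otimes e_h.
\]
A direct computation shows that $\Phi$ intertwines $\phi(g) \otimes 1_H = 1_A \otimes \lambda_g \otimes 1_H$ with $1_H \otimes \lambda_g = u(g)$, and $\phi(a) \otimes 1_H$ with $\pi(a)$, since both send the class of $(x \otimes e_h) \otimes \xi \leftrightarrow \sigma(x)\xi \otimes e_h$ to $\sigma(h^*(a))\sigma(x)\xi \otimes e_h$. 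Thus $\phi \otimes 1_H$ is unitarily equivalent to the left regular covariant representation $(\pi,u)$ on $H \otimes \ell^2(G)$. Since $\sigma$ is faithful, Lemma \ref{lemmaActingOnSpaceTensor} asserts that $\mu: T \mapsto T \otimes 1_H$ is an injective, hence isometric, $*$-homomorphism $\call(A \otimes \ell^2(G)) \to \call((A \otimes \ell^2(G)) \otimes_A H)$. Consequently $\|\phi(x)\| = \|(\pi \rtimes u)(x)\| = \|x\|_{A \rtimes_r G}$ for every $x \in A \rtimes_{\rm alg} G$, and $\phi$ extends to the desired isometric and hence injective $\zeta$. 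The main obstacle is the bookkeeping in the identification of $\phi \otimes 1_H$ with $\pi \rtimes u$: one must simultaneously track the $A$-module structure, the $G^*$-action, and the associativity of the skew/interior tensor product rearrangement.
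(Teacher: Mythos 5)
Your proposal is correct and follows essentially the same route as the paper: the paper likewise forms the composite $\kappa=\mu_2\mu_1\mu$ from Lemmas \ref{lemmaActingOnSpaceTensor}, \ref{lemmaTensorSkewwithInterior} and \ref{lemmakuerzetensor} (your unitary $\Phi$ composed with $T\mapsto T\otimes 1_H$) and verifies pointwise that $\kappa\phi=\phi_r$, concluding that $\phi$ and the left regular representation induce the same norm on $A\rtimes_{\rm alg}G$ and setting $\zeta=\kappa^{-1}$. Your explicit intertwining computation and the observation that injective $*$-homomorphisms are isometric are exactly the bookkeeping the paper carries out.
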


\begin{proof}
Let $\phi_r$ be the representation of $A \rtimes_{\rm alg} G$ induced by the left regular representation
(Definition \ref{definitionLeftRegular}).
Let $\sigma:A \longrightarrow B(H)$ be a faithful and non-degenerate representation (without $G$-action) of $A$ on a Hilbert space $H$.
We aim to show that there is a commutative diagram
$$\begin{xy}
\xymatrix{
A \rtimes_{\rm alg} G \ar[r]^\phi  \ar[dr]_{\phi_r} & \call \big (A  \otimes \ell^2(G) \big ) \ar[rr]^\mu   \ar[d]^\kappa & & \call \big ( (A \otimes \ell^2(G)) \otimes_{A \otimes \C} (H \otimes \C) \big)
\ar[d]^{\mu_1} \\
& \call \big (H \otimes \ell^2(G) \big )  & & \call \big ((A \otimes_A H)  \otimes  (\ell^2(G) \otimes_\C \C) \big )  \ar[ll]^{\mu_2}
}
\end{xy}$$
Here, $\mu$ is the injective homomorphism of Lemma \ref{lemmaActingOnSpaceTensor},
and $\mu_1$ and $\mu_2$ denote the isomorphisms induced by the isomorphisms of 
Lemma \ref{lemmaTensorSkewwithInterior} and Lemma \ref{lemmakuerzetensor}, respectively.
Define $\kappa := \mu_2 \mu_1 \mu$, which is injective.
We are going to analyse $\kappa (\phi (a \rtimes g))$.
We write an element $\xi \in H$ as $\sigma(a_0) \xi_0$ for $a_0 \in A$ and $\xi_0 \in H$ by Lemma \ref{lemmakuerzetensor}.
We shall write down, step by step, how $\phi(a \rtimes g)$ transforms under $\kappa$.
Let $g \in G^*, h \in G, a \in A_g,x_h \in A$ and $\xi \in H$.
We have
\begin{eqnarray*}
\phi \big(a \rtimes g \big) \big (x_h  \otimes e_h \big )  &=&  (gh)^*(a) x_h \otimes e_{g h}\\
\mu \phi \big(a \rtimes g  \big) \big (  (x_h  \otimes e_h)  \otimes (\xi \otimes 1_\C) \big)  &=&  ((gh)^*(a) x_h \otimes e_{g h}) \otimes (\xi \otimes 1_\C) \\
\kappa \phi \big(a \rtimes g \big) \big (  \sigma(x_h) \xi  \otimes e_h \big)  &=&  \sigma \big ( (gh)^*(a) \big)  \sigma( x_h) \xi  \otimes e_{g h}\\
\kappa \phi \big(a \rtimes g \big) \big (  \overline \xi  \otimes e_h \big)  &=&  \sigma \big ( (gh)^*(a) \big)  \overline \xi  \otimes e_{g h}\\
&=&  \phi_r \big(a \rtimes g \big) \big (  \overline \xi  \otimes e_h \big)
\end{eqnarray*}
In the last step we have set $\overline \xi:=\sigma(x_h) \xi$ (Lemma \ref{lemmakuerzetensor}).
We have checked that $\phi_r = \kappa \phi$. This shows that $\overline{\phi(A \rtimes_{\rm alg} G)}$
is isomorphic to $A \rtimes_r G$, and we set $\zeta := \kappa^{-1}$.
\end{proof}

\begin{corollary}  \label{corollaryLeftRegReprIndepenceRepPi}
The definition of the left reduced crossed product in Definition \ref{definitionLeftRegular} does not depend on $\sigma$.
\end{corollary}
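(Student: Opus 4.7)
The plan is to extract the independence statement directly from the commutative diagram constructed in Lemma \ref{lemmaLeftRegActionHilbertModule}. The key observation is that the representation $\phi: A \rtimes_{\rm alg} G \longrightarrow \call(A \otimes \ell^2(G))$ appearing in that lemma is manufactured purely from the $G$-Hilbert $C^*$-algebra structure of $A$ on the standard module $A \otimes \ell^2(G)$; the auxiliary faithful representation $\sigma: A \to B(H)$ plays no role in its construction. The corollary should therefore be a formal consequence, and I do not expect any real obstacle.

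First, I would recall from the proof of Lemma \ref{lemmaLeftRegActionHilbertModule} that the two representations are linked by $\phi_r = \kappa \circ \phi$, where $\kappa = \mu_2 \mu_1 \mu$ is the composition of the injective homomorphism $\mu$ of Lemma \ref{lemmaActingOnSpaceTensor} with the isomorphisms $\mu_1, \mu_2$ provided by Lemma \ref{lemmaTensorSkewwithInterior} and Lemma \ref{lemmakuerzetensor}. In particular $\kappa$ is an injective $*$-homomorphism between $C^*$-algebras.

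The crucial step is to invoke the standard fact that an injective $*$-homomorphism of $C^*$-algebras is isometric. Applying this to $\kappa$ gives
$$\|\phi_r(x)\| = \|\kappa(\phi(x))\| = \|\phi(x)\|$$
for every $x \in A \rtimes_{\rm alg} G$. Consequently, the norm that $\phi_r$ induces on $A \rtimes_{\rm alg} G$ coincides with the norm induced by $\phi$, and taking the completion of $\phi(A \rtimes_{\rm alg} G) \subseteq \call(A \otimes \ell^2(G))$ produces a $C^*$-algebra canonically $*$-isomorphic to $A \rtimes_r G$ via $\zeta = \kappa^{-1}$ restricted to $\overline{\phi_r(A \rtimes_{\rm alg} G)}$.

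Since the completion $\overline{\phi(A \rtimes_{\rm alg} G)}$ has been built without reference to $\sigma$ or $H$, the resulting $C^*$-algebra (and hence $A \rtimes_r G$ up to canonical isomorphism) depends only on the $G$-Hilbert $C^*$-algebra $A$. This is what was to be shown.
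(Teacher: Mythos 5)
Your proposal is correct and follows the same route the paper intends: Corollary \ref{corollaryLeftRegReprIndepenceRepPi} is meant to be read off directly from Lemma \ref{lemmaLeftRegActionHilbertModule}, whose proof establishes $\phi_r = \kappa\phi$ with $\kappa$ an injective (hence isometric) $*$-homomorphism, so that $A \rtimes_r G \cong \overline{\phi(A \rtimes_{\rm alg} G)}$ with $\phi$ built solely from the $G$-Hilbert $C^*$-algebra structure of $A$ on $A \otimes \ell^2(G)$. Your explicit appeal to the isometry of injective $*$-homomorphisms just spells out what the paper leaves implicit.
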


For the rest of this section we consider the following assumptions.
Let $L: \F(G,A) \longrightarrow B(H \otimes \ell^2(G))$ be the left regular representation.
Then $L(G^*)$ is an inverse semigroup.
Suppose that the $G^*$-action on $A$ factors through $L(G^*)$ via an inverse semigroup homomorphism $\mu$.
$$\begin{xy}
\xymatrix{
G^*  \ar[r]^{L}  \ar[rd]_{\hat \alpha}  & L(G^*)  \ar[d]^\mu\\
 &  \Endo (A)
}
\end{xy}$$
(For instance, when the $G$-action on $A$ is trivial.)
Then $\mu$ defines a $L(G^*)$-action on $A$.
Suppose further that $L$ is injective on $A$.


\begin{lemma}
There is an isomorphism
\begin{eqnarray}   \label{mapGamma}
\gamma: L \big( \F(G,A) \big) \longrightarrow
\F \big (L(G^*), A \big): \quad \gamma(L(a)) = a, \quad \gamma( L(g))=L(g),
\end{eqnarray}
where $a \in A$ and $g \in G^*$, which restricts to an isomorphism
\begin{eqnarray}   \label{mapGammaRestrict}
L( A \rtimes_{\rm alg} G) \longrightarrow  A \rtimes_{\rm alg} L(G^*).
\end{eqnarray}
\end{lemma}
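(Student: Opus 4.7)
The approach is to build $\gamma$ as the inverse of a $*$-homomorphism $\delta$ produced by universality. Using the universal property of $\F(L(G^*),A)$ from Definition \ref{definitionAlgCrossedProduct} (with $A$ regarded as an $L(G^*)$-Hilbert $C^*$-algebra via $\mu$), the assignments $a\mapsto L(a)$ and $L(g)\mapsto L(g)$ extend to a $*$-homomorphism
\[
\delta\colon \F(L(G^*),A) \longrightarrow L(\F(G,A)) \subseteq B(H\otimes \ell^2(G)).
\]
The defining relations (\ref{ident0})--(\ref{ident2}) hold in the target since they are the images under the $*$-homomorphism $L$ of the corresponding relations in $\F(G,A)$; in particular, the covariance relation $L(g)\,a\,L(g)^*=\mu(L(g))(a)\,L(g)L(g)^*$ in $\F(L(G^*),A)$ maps to $L(g)L(a)L(g)^* = L(g(a))\,L(g)L(g)^*$ in $L(\F(G,A))$, which is $L$ applied to (\ref{ident2}), using $\mu(L(g))=\hat\alpha_g$ so that $\mu(L(g))(a)=g(a)$. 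Dually, universality of $\F(G,A)$ yields a $*$-homomorphism $\tilde\gamma\colon \F(G,A)\to \F(L(G^*),A)$ with $\tilde\gamma(a)=a$ and $\tilde\gamma(g)=L(g)$, and an easy check on generators gives $\delta\circ\tilde\gamma=L$.

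The map $\delta$ is surjective, its image containing $L(A)\cup L(G^*)$, so it remains to show injectivity; then $\gamma:=\delta^{-1}$ is the desired isomorphism, and $\delta\circ\tilde\gamma=L$ together with injectivity of $\delta$ gives $\ker\tilde\gamma=\ker L$, so $\tilde\gamma$ descends to $\gamma$ on $L(\F(G,A))$ with $\gamma(L(a))=a$ and $\gamma(L(g))=L(g)$. For injectivity of $\delta$ I use the linear decomposition of Lemma \ref{lemmaIsomFGA},
\[
\F(L(G^*),A)\;\cong\; A\,\oplus\,C_c\big((L(G^*))^*,A\big)\,\oplus\,(L(G^*))^*,
\]
first identifying $(L(G^*))^*\cong L(G^*)$: since $L(G^*)$ is an inverse semigroup, the map $x^*\mapsto x^{-1}$ factors through $(L(G^*))^*$ and is two-sided inverse to the canonical inclusion by uniqueness of inverses. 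On the $A$-summand $\delta=L|_A$ is injective by hypothesis; on the $L(G^*)$-summand it is the operator inclusion into $B(H\otimes\ell^2(G))$; and on $C_c(L(G^*),A)=\bigoplus_x A_x\cdot x$, the map $\sum a_x\,x\mapsto \sum L(a_x)\,x$ evaluated on a basis vector $\xi\otimes e_h$ gives
\[
\sum_{x\in L(G^*)}\sigma\!\big((\rho_x(h))^*(a_x)\big)\,\xi\otimes e_{\rho_x(h)},
\]
from which each $a_x$ can be recovered using faithfulness of $\sigma$ and the fact that $L(G^*)$, as an inverse semigroup of partial isometries on $\ell^2(G)$, acts distinctly on the basis $\{e_h\}_{h\in G}$.

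The main obstacle is precisely this injectivity argument, together with the linear independence of the three summands inside $B(H\otimes\ell^2(G))$; this rests on the concrete nature of the left regular representation rather than on pure universality. Once $\gamma$ is in hand, the restricted isomorphism (\ref{mapGammaRestrict}) follows at once: the $*$-subalgebra $L(A\rtimes_{\rm alg}G)$ is generated by $\{L(a)L(g):a\in A,\,g\in G\}$, which $\gamma$ maps to $\{a\cdot L(g)\}$, and these together with their $*$-conjugates (lying in $A\cdot L(g)^*$ by (\ref{ident2})) and their products generate the $*$-subalgebra $A\rtimes_{\rm alg}L(G^*)$ of $\F(L(G^*),A)$.
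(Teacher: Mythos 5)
Your construction of $\delta$ (the paper's $\sigma$) and $\tilde\gamma$ (the paper's $\gamma\circ L$) via the two universal properties, the verification of the relations (\ref{ident0})--(\ref{ident2}) on generators, and the observation $\delta\circ\tilde\gamma=L$ reproduce the paper's proof exactly; and you are right that everything then hinges on the injectivity of $\delta$, equivalently on $\ker L\subseteq\ker\tilde\gamma$, which is what makes $\gamma$ well defined on the concrete algebra $L(\F(G,A))$. The paper passes over this point with the single sentence ``since $L$ and $\gamma\circ L$ are homomorphisms, $\gamma$ is a homomorphism,'' so isolating it as the main obstacle is the right instinct.

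Your injectivity argument, however, has two concrete gaps. First, $(L(G^*))^*\cong L(G^*)$ does not follow from uniqueness of inverses: in the quotient of $F(L(G^*))$ by the elementary equivalences of Definition \ref{simple_equivalences}, the formal adjoint $x^*$ of $x\in L(G^*)$ is a new generator, and the relations $x\odot y=xy$, $x=xx^*x$, $x^*=x^*xx^*$ do not identify $x^*$ with $x^{-1}$. Already for a single idempotent $e$ there is a four-element $*$-semigroup $\{e,e^*,ee^*,e^*e\}$ with $e\neq e^*$ satisfying all of these relations, so $(L(G^*))^*$ need not be an inverse semigroup and uniqueness of inverses is not available there; the two elements $x^*$ and $x^{-1}$ are exactly the ones your $\delta$ sends to the same operator, which is where injectivity is at risk. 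Second, even granting that identification, ``acts distinctly on the basis'' does not give linear independence of the operators $L(a_x)\,x$: distinct $x\neq y$ in $L(G^*)$ correspond to distinct partial translations $\rho_x\neq\rho_y$ of $G$, but finitely many distinct partial translations can still satisfy a nontrivial linear relation (one graph may be partitioned by the others), so recovering each $a_x$ from $\sum_x L(a_x)x$ on vectors $\xi\otimes e_h$ requires an actual separation argument, not just faithfulness of $\sigma$. Until both points are repaired the injectivity of $\delta$, and hence the well-definedness of $\gamma$, is not established — though to be fair, the paper's own proof supplies no argument for this step either.
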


\begin{proof}
Note that in $\F(L(G^*),A)$ we have $L(g) a = \mu_{L(g)} (a) L(g) = \hat \alpha_g(a) L(g)= g(a) L(g)$.
At first we shall show that $\gamma\circ L$ is a representation of $\F(G,A)$. To this end we need to check that
the relations (\ref{ident0})-(\ref{ident2}) are respected by $\gamma\circ L$.
We only show (\ref{ident2}),
$$\gamma L(g) \gamma L(a) (\gamma L(g))^* = L(g) a L(g)^* = g(a) L(g) L(g)^* = \gamma L(g(a)) \gamma L(g) (\gamma L(g))^*.$$
Since $L$ and $\gamma \circ L$ are homomorphisms, $\gamma$ is a homomorphism.
 
We need to show that there is an inverse map $\sigma$ for $\gamma$, where $\sigma(a) = L(a)$ and $\sigma(L(g))=L(g)$.
Again we have to check that the relations (\ref{ident0})-(\ref{ident2}) are respected by $\sigma$. For instance,
$$\sigma(L(g)) (\sigma(L(g)))^* \sigma(L(g)) = L(g) L(g)^* L(g) = L(g) = \sigma(L(g)),$$
since $L(g)$ is a partial isometry.
\end{proof}

\begin{corollary}  \label{corollaryIsomorphRedInvSemiCrossed}
If the given $C^*$-norm on $L(A \rtimes_{\rm alg} G)$ is the maximal (covariant) one, then 
\begin{eqnarray}  \label{equivalRedInvSemi}
A \rtimes_r G \cong A \rtimes L(G^*).
\end{eqnarray}
\end{corollary}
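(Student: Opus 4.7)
The approach is to promote the algebraic $*$-isomorphism $\gamma$ of (\ref{mapGammaRestrict}) to a $C^*$-isomorphism of completions. First, I would recall from Lemma \ref{lemmaLeftRegActionHilbertModule} (together with Corollary \ref{corollaryLeftRegReprIndepenceRepPi}) that $A \rtimes_r G$ is canonically the norm closure of $L(A \rtimes_{\rm alg} G)$ inside $B(H \otimes \ell^2(G))$; equivalently, it is the completion of $A \rtimes_{\rm alg} G$ in the $C^*$-seminorm $\|x\|_r := \|L(x)\|$.

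Next, I would transport this reduced norm across the algebraic isomorphism (\ref{mapGammaRestrict}) to obtain a $C^*$-norm on the inverse semigroup algebraic crossed product $A \rtimes_{\rm alg} L(G^*)$. By the stated hypothesis, this transferred norm is precisely the maximal (covariant) $C^*$-norm used to define the full crossed product $A \rtimes L(G^*)$. Since $\gamma$ is then isometric with respect to these matched norms, it extends uniquely and continuously to a $*$-isomorphism of completions $A \rtimes_r G \cong A \rtimes L(G^*)$, yielding (\ref{equivalRedInvSemi}).

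There is no substantial obstacle here: the real work is already done in the preceding lemma, which produces the algebraic isomorphism, and the hypothesis is precisely the pairing device that identifies the reduced $C^*$-norm on the left with the universal covariant $C^*$-norm on the right. As a sanity check one can note that the inequality $\|\cdot\|_r \le \|\cdot\|_{\max}$ is automatic on $A \rtimes_{\rm alg} L(G^*)$, because through $\gamma$ the left regular representation $L$ is itself a covariant representation of the inverse semigroup $L(G^*)$ on $A$; the hypothesis is thus a maximality (amenability-type) assumption, and where it holds the corollary follows at once.
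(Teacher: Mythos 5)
Your argument is correct and takes essentially the same route as the paper: both run the algebraic isomorphism $\gamma_0$ of (\ref{mapGammaRestrict}) against the two norms and conclude it is isometric, the inequality $\|\cdot\|_r \le \|\cdot\|_{\max}$ coming from $\gamma_0^{-1}$ being a covariant representation of $A \rtimes_{\rm alg} L(G^*)$ on $H \otimes \ell^2(G)$, and the reverse inequality from the maximality hypothesis. The only step you leave implicit is that the universal representation of $A \rtimes_{\rm alg} L(G^*)$, pulled back through $\gamma_0$, is again a covariant representation of $L(A \rtimes_{\rm alg} G)$ --- this is what licenses invoking the hypothesis, and it is exactly what the preceding lemma (showing $\gamma \circ L$ respects the relations (\ref{ident0})--(\ref{ident2})) supplies.
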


\begin{proof}
Let $\gamma_0$ be the isomorphism (\ref{mapGammaRestrict}) and endow domain and range with the norms from $A \rtimes_r G$ and $A \rtimes L(G^*)$, respectively.
Since $\gamma_0^{-1}$ is the restriction of $\gamma^{-1}$, (\ref{mapGamma}), by Lemma \ref{lemmaBijectionCovRepF}
it is a covariant representation of $A \rtimes_{\rm alg} L(G^*)$. Thus $\gamma_0^{-1}$ is norm-decreasing.
On the other hand, $\gamma_0$ is a (covariant) representation of $L(A \rtimes_{\rm alg} G)$, which by assumption must decrease in norm.
Thus $\gamma_0$ is an isometry and extends continuously to (\ref{equivalRedInvSemi}).
%
%
\end{proof}


The last corollary may be useful to translate reduced crossed products to inverse semigroup crossed products, for which there exist more Baum--Connes theory (see for instance \cite{burgiKKrDiscrete} and \cite{burgiGreenJulg}).
For example,
some Toeplitz graph $C^*$-algebras for graphs $\Lambda$ are reduced $C^*$-algebras $\C \rtimes_r \Lambda^*$ (via the so so-called path space representation, see for instance \cite{simsThesis}).
By a Cuntz--Krieger uniqueness theorem (the $C^*$-norm on $L(\C \rtimes_{\rm alg} \Lambda^*)$ is unique), Corollary \ref{corollaryIsomorphRedInvSemiCrossed} applies immediately.





\section{Representations of $\ell^1(G)$}   \label{sectionReprL1}

Write $\ell^1(G,A)$ for the completion of $C_c(G^*,A)$
under the norm
$\|\sum_{g \in G^*} a_g g\|_{1} = \sum_{g \in G^*} \|a_g\|$.
For $a , b \in C_c(G^*,A)$ the estimate
$\|a b\|_1 \le \|a\|_1 \|b\|_1$ is easy.

\begin{lemma}
$\ell^1(G,A)$ is a Banach $*$-algebra.
\end{lemma}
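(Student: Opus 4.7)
The plan is to verify the three ingredients of a Banach $*$-algebra: the submultiplicativity of the $\ell^1$-norm under convolution, the fact that the involution is an isometry, and the compatibility of both operations with the completion process. The algebraic $*$-algebra structure on $C_c(G^*,A)$ is already given by Lemma~\ref{lemmaReprConvolutionAlg}, so what remains are purely norm-theoretic estimates.

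First I would carefully reprove $\|ab\|_1 \le \|a\|_1\|b\|_1$ for $a=\sum_g a_g g$ and $b=\sum_h b_h h$ in $C_c(G^*,A)$. The slight subtlety is that the convolution $ab = \sum_k\bigl(\sum_{gh=k} a_g\, g(b_h)\bigr) k$ may collect several summands into a single element of $G^*$, so I would group by $k\in G^*$ first and only then apply the triangle inequality:
\begin{eqnarray*}
\|ab\|_1 &=& \sum_{k \in G^*} \Bigl\|\sum_{gh=k} a_g\, g(b_h)\Bigr\|
\;\le\; \sum_{g,h \in G^*} \|a_g\,g(b_h)\|\\
&\le& \sum_{g,h} \|a_g\|\,\|b_h\| \;=\; \|a\|_1\|b\|_1,
\end{eqnarray*}
where the second inequality uses that each $g \in G^*$ acts on $A$ by a partial isometry (Lemma~\ref{lemmaHilbertCstarGstar}(i)), hence as a norm contraction.

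Next I would verify that the involution $(\sum_g a_g g)^* = \sum_g g^*(a_g^*) g^*$ is an isometry. Again one has to allow for collisions when re-indexing the sum by $h=g^*$, so grouping first and then applying the triangle inequality gives
\[
\|a^*\|_1 = \sum_{h \in G^*} \Bigl\|\sum_{g: g^*=h} g^*(a_g^*)\Bigr\|
\;\le\; \sum_{g \in G^*} \|g^*(a_g^*)\| \;\le\; \sum_{g} \|a_g\| = \|a\|_1,
\]
using that $g^*$ is contractive and $*$ is isometric on $A$. Applying the same inequality to $a^*$ and invoking $(a^*)^* = a$ (already part of the $*$-algebra structure) yields the reverse inequality, so $\|a^*\|_1 = \|a\|_1$.

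Finally, I would extend by continuity. The estimate $\|ab\|_1 \le \|a\|_1 \|b\|_1$ shows that multiplication on $C_c(G^*,A)$ is jointly continuous with respect to $\|\cdot\|_1$, hence extends uniquely to a jointly continuous bilinear map on the completion $\ell^1(G,A)$ satisfying the same inequality; associativity passes to the completion by continuity. Similarly the involution, being an isometry, extends uniquely to an isometric conjugate-linear map on $\ell^1(G,A)$, and the identities $(a^*)^* = a$ and $(ab)^* = b^*a^*$ pass from the dense subalgebra $C_c(G^*,A)$ to $\ell^1(G,A)$ by continuity. This yields the Banach $*$-algebra structure. There is no real obstacle here; the only thing one has to be mildly careful about is performing the triangle inequality \emph{after} collecting terms with a common index in $G^*$, since $G^*$ generally lacks cancellation and the assignment $(g,h) \mapsto gh$ is not injective.
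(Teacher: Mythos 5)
Your proof is correct and supplies exactly the standard verification that the paper leaves implicit (the paper states the lemma without proof, remarking only that the submultiplicativity estimate is easy). One minor over-caution: in the involution step there are no collisions to worry about, since $g \mapsto g^*$ is an involution and hence a bijection on $G^*$; otherwise your grouping-before-triangle-inequality care for the convolution is exactly the right point to attend to.
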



%

A {\em representation} of $\ell^1(G,A)$ is a norm bounded $*$-homomorphism $\pi: \ell^1(G,A) \longrightarrow B(H)$, where $H$ is a Hilbert space.

\begin{proposition}
%
If $\ell^1(G,A)$ has an approximate unit then
a representation of $\ell^1(G,A)$ is realized by a covariant representation of $A$, and vice versa.
(It need not be a bijection, see \cite[Remark, p.271]{1061.46047}.)



Consequently, if $\ell^1(G,A)$ has an approximate unit then a representation of $A \rtimes_{\rm alg} G$ extends to $\F(G,A)$ if and only if it is covariant if and only if it is bounded
in $\ell^1(G,A)$-norm.
\end{proposition}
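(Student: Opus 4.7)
The implication from covariance to $\ell^1$-boundedness is a direct norm estimate and does not require the approximate-unit hypothesis. Given a covariant representation $(\pi_A,u,H)$, Lemma \ref{lemmaBijectionCovRepF} produces a $*$-homomorphism $\phi:\F(G,A)\to B(H)$. Identifying $A\rtimes_{\rm alg}G$ with $C_c(G^*,A)$ (Lemma \ref{lemmaReprConvolutionAlg}) and using that each $u_g$ is a partial isometry, hence a contraction (Corollary \ref{GactionConsistentPartialIso} together with Lemma \ref{characterizationlemmaPartIso}), the estimate
$$\Big\|\phi\Big(\sum_{g\in G^*}a_g g\Big)\Big\|\le\sum_{g}\|\pi_A(a_g)\|\,\|u_g\|\le\sum_{g}\|a_g\|=\Big\|\sum_g a_g g\Big\|_1$$
shows $\phi$ extends continuously from the dense subalgebra $C_c(G^*,A)$ to a bounded $*$-representation of $\ell^1(G,A)$.

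For the reverse direction, let $\pi:\ell^1(G,A)\to B(H)$ be a bounded $*$-representation. Write $H=H_0\oplus H_0^\perp$ with $H_0=\overline{\pi(\ell^1(G,A))H}$; the orthogonal complement carries only the zero representation, which is trivially covariant, so we reduce to the case $\pi$ nondegenerate on $H_0$. A bounded approximate unit $(e_\lambda)$ then produces $\pi(e_\lambda)\to I$ strongly. The plan is to realize $A$ and each $g\in G$ as double centralizers of the Banach $*$-algebra $\ell^1(G,A)$: left and right multiplication by $a\in A$ gives $L_a,R_a$ with $\|L_a\|_1,\|R_a\|_1\le\|a\|$, using that each $A_g$ is a two-sided ideal in $A$ (Lemma \ref{lemmaHilbertCstarGstar}(iii)), and left/right multiplication by $g\in G$ in $\F(G,A)$, rewritten into the canonical form $\sum b_k k$ by (\ref{ginfa}), gives bounded multipliers $L_g,R_g$. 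The standard procedure for nondegenerate Banach $*$-representations then uniquely extends $\pi$ to the multiplier algebra: define
$$\pi_A(a)\pi(y)\eta=\pi(L_a y)\eta,\qquad u_g\pi(y)\eta=\pi(L_g y)\eta,$$
well-definedness and boundedness following from the compatibility of $(L_a,R_a)$ and $(L_g,R_g)$ together with strong convergence of $\pi(e_\lambda)$.

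The covariance relations (\ref{ident1})--(\ref{ident2}) and the partial-isometry character of $u_g$ transfer through $\pi$ because the corresponding identities already hold at the multiplier level (they encode the defining relations of $\F(G,A)$), and partial isometries are characterized purely algebraically by Lemma \ref{characterizationlemmaPartIso}. This yields the desired covariant representation $(\pi_A,u,H)$, and reassembling the two halves with Lemma \ref{lemmaBijectionCovRepF} proves the ``consequently'' clause: extendability to $\F(G,A)$ is equivalent to covariance by Lemma \ref{lemmaBijectionCovRepF}, and covariance is equivalent to $\|\cdot\|_1$-boundedness by the two directions just established. The main technical obstacle is well-definedness in the construction of $\pi_A(a)$ and $u_g$, which is precisely what the approximate unit resolves; the failure of bijectivity, alluded to in the parenthetical remark, arises because different covariant representations may share the same essential part and so induce the same $\ell^1$-representation.
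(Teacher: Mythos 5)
Your proposal is correct and follows essentially the same route as the paper's proof: both directions are handled exactly as in the paper, namely the easy $\ell^1$-norm estimate for the covariant-to-bounded direction, and for the converse the Pedersen-style argument (split off the null part, use the approximate unit to realize $A$ and $G$ as multipliers $L_a,R_a,L_g,R_g$ of $\ell^1(G,A)$, and extend the nondegenerate representation to these multipliers). The only cosmetic difference is that the paper phrases the extension as strong limits $\sigma(a)=\lim_i\pi(L_a(y_i))$ rather than via the action on the dense subspace $\pi(\ell^1(G,A))H$, which amounts to the same construction.
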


\begin{proof}
We essentially follow Pedersen's book \cite{pedersen}, Proposition 7.6.4.
Let $\pi: \ell^1(G,A) \rightarrow B(H)$ be a representation on a Hilbert space $H$.
It is a direct sum of a non-degenerate representation and the null-representation.
We may ignore the null-part, which we can then add to the covariant representation of $A$ again, and vice versa, and assume
that $\pi$ is non-degenerate.
The left and right multiplications of elements $z \in A \rtimes_{\rm alg} G$ by elements $a \in A , g \in G$ in the algebra $\F(G,A)$,
that is, $z \mapsto a z$ would be the operator given by left multiplication by $a$,
induce bounded linear maps (even centralizers) $L_a, L_g, R_a, R_g$ from $\ell^1(G,A)$ into itself.
Let $(y_i) \subseteq \ell^1(G,A)$ be a given approximate unit.
Since $\pi$ is non-degenerate, $\pi(\ell^1(G,A))H$ is dense in $H$.
Since for each $\eta =\pi(x) \xi$ ($x \in \ell^1(G,A), \xi \in H$) one has $\|\eta - \pi(y_i) \eta \| \le \|\pi(x - y_i x) \xi \|
\le \|x - y_i x\|_1 \|\xi\|
\rightarrow 0$ for $i \rightarrow \infty$, $\pi(y_i)$ converges strongly to the unit of $B(H)$.
Similarly, for all $a \in A$ and $x \in \ell^1(G,A)$
the Cauchy criterium
$\| \pi(a y_i - a y_j) \pi(x) \xi\| \le \varepsilon$
for all $i,j \ge i_0$ shows that $\pi(a y_i)=\pi(L_a(y_i))$ has a strong limit point $\sigma(a)$.
Hence $\pi(a x)= \lim_i \pi(a y_i x) = \lim_i \pi(ay_i) \pi(x)=\sigma(a) \pi(x)$.
Since $\|\pi(y_i a - a y_i) \pi(x) \xi\| \rightarrow 0$ for $i \rightarrow \infty$,
$\sigma(a) =\lim_i \pi(L_a(y_i)) =\lim_i \pi(R_a(y_i))$ (strong limits).
In the same manner we define $U_g = \lim_i \pi(L_g(y_i)) = \lim_i \pi(R_g(y_i))$ (strong limits),
and one has $\pi(g x)= U_g \pi(x)$  for $g \in G$.
Analogously we define $U_g^*$ for $g \in G$.
A direct check shows that $(\sigma,U,H)$ is a $G$-covariant representation of $A$.
For instance,
$$U_g \sigma(a) U_g^* \pi(x) =  U_g \sigma(a) \pi(g^* x) = \pi(g a g^* x)
= \pi(g (a) g g^* x) = \sigma(g(a))U_g U_g^* \pi(x),$$
and replacing $x$ by $y_i$ and taking the limit yields (\ref{equivariantRep3}).
In particular we have $\pi(a_g g) = \sigma(a_g) U_g$, which extends by norm continuity to $\ell^1(G,A)$.
This shows that $\pi$ will be assigned to $(\sigma,U,H)$.
On the other hand, starting with a representation $(\sigma,U,H)$
we define a representation $\pi$ of $\ell^1(G,A)$ by $\pi(a_g g) = \sigma(a_g) U_g$.
\end{proof}

\begin{corollary}   \label{corollaryEll1}
If $\ell^1(G,A)$ has an approximate unit then $A \rtimes G$ (resp.
$A \rtimes_s G$) is the $C^*$-algebra generated by the universal
(resp. universal strong) representation of $\ell^1(G,A)$.
\end{corollary}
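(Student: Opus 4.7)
The plan is to reduce this corollary directly to the preceding proposition, which already establishes the decisive correspondence between bounded $*$-representations of $\ell^1(G,A)$ and $G$-covariant representations of $A$.

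First I would invoke the proposition: for each bounded $*$-representation $\pi:\ell^1(G,A)\to B(H)$ there is a covariant representation $(\sigma,U,H)$ of $A$ with $\pi(a_g g)=\sigma(a_g) U_g$ for $a_g\in A_g$, $g\in G^*$, and conversely. In particular the restriction of $\pi$ to the dense subalgebra $A\rtimes_{\rm alg} G\subseteq \ell^1(G,A)$ coincides with the representation of $A\rtimes_{\rm alg} G$ induced by the covariant pair via Lemma \ref{lemmaBijectionCovRepF}.

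Next I would construct a universal $\ell^1$-representation $\pi^\infty$ as the direct sum of all bounded $*$-representations on a Hilbert space of sufficiently large cardinality, exactly mirroring the construction of the universal covariant representation $\phi^\infty$. By the correspondence above, $\pi^\infty|_{A\rtimes_{\rm alg} G}$ is, up to an ignorable degenerate summand, unitarily equivalent to $\phi^\infty|_{A\rtimes_{\rm alg} G}$, so their operator-norm closed images agree. Since $A\rtimes_{\rm alg} G$ is $\ell^1$-norm dense in $\ell^1(G,A)$ and each summand of $\pi^\infty$ is $\ell^1$-bounded, $\pi^\infty(A\rtimes_{\rm alg} G)$ is operator-norm dense in $\pi^\infty(\ell^1(G,A))$. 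Thus the $C^*$-algebra generated by $\pi^\infty$ equals $\overline{\phi^\infty(A\rtimes_{\rm alg} G)}=A\rtimes G$.

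For the strong version I would repeat the argument after restricting simultaneously to strong covariant representations of $A$ and the corresponding subfamily of representations of $\ell^1(G,A)$, characterized under the correspondence by $U_g U_h=0$ for incomposable pairs $g,h\in G$ (equivalently, $\pi(g)\pi(h)=0$, where $\pi(g)$ is interpreted through the strong limit $\lim_i \pi(L_g(y_i))$ constructed in the proof of the proposition); this vanishing condition is transported across the correspondence since on $A\rtimes_{\rm alg} G$ one has $\pi(a g)\pi(b h)=\sigma(a)U_g\sigma(b)U_h$. The universal such representation then reproduces $A\rtimes_s G$ by the same reasoning. The main obstacle I anticipate is technical rather than conceptual: one must ensure the cardinality bookkeeping for the universal Hilbert spaces on the two sides matches, so that the correspondence yields equality, not merely an inequality, of $C^*$-seminorms on $A\rtimes_{\rm alg} G$; this is handled exactly as in Pedersen's treatment of enveloping $C^*$-algebras of Banach $*$-algebras with approximate unit.
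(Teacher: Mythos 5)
Your argument is correct and is exactly the route the paper intends: Corollary \ref{corollaryEll1} is stated without proof as an immediate consequence of the preceding proposition, and your write-up simply makes explicit the standard step (the correspondence between bounded representations of $\ell^1(G,A)$ and covariant representations of $A$ identifies the two universal $C^*$-seminorms on the dense subalgebra $A\rtimes_{\rm alg}G$, and $\ell^1$-density transfers this to the completions). The same remarks apply verbatim to the strong case.
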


\begin{lemma}  \label{lemmaInvSemigroupKhoshkam}
If $G$ is an inverse semigroup then $A \rtimes G$ coincides with Khoshkam and Skandalis' definition in \cite{1061.46047}, so is
the envelopping $C^*$-algebra of $\ell^1(G,A)$.
\end{lemma}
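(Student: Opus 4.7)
The plan is to reduce the claim to Corollary \ref{corollaryEll1}, which asserts that whenever $\ell^1(G,A)$ admits an approximate unit, $A \rtimes G$ coincides with the enveloping $C^*$-algebra of all representations of $\ell^1(G,A)$. Since the Khoshkam--Skandalis construction in \cite{1061.46047} is precisely this enveloping $C^*$-algebra (built from their $\ell^1$-algebra whose multiplication and involution agree, term by term, with the formulas of Lemma \ref{lemmaReprConvolutionAlg}), the statement will follow once the approximate-unit hypothesis is verified and once the formulas match.

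First I would identify $G^*$ with $G$ in the inverse-semigroup case. If $G$ is an inverse semigroup, every $g \in G$ is invertible in the sense of the paper (take $h = g^{-1}$), the idempotents $gg^{-1}$ commute in $G$, and by Corollary \ref{corollaryHilbertactionInverses} any $G$-action on a Hilbert module satisfies $U_g^{*} = U_{g^{-1}}$. Consequently the canonical $*$-morphism $G \to G^*$, $g \mapsto g$, is a quotient onto $G^*$: every word in $F(G)$ reduces via the elementary equivalences of Definition \ref{simple_equivalences} to a single element of $G$. Under this identification the convolution/involution of Lemma \ref{lemmaReprConvolutionAlg} reads $(a_g \delta_g)(b_h \delta_h) = a_g\, g(b_h)\, \delta_{gh}$ and $(a_g \delta_g)^* = g^{-1}(a_g^*)\, \delta_{g^{-1}}$, which is verbatim the Khoshkam--Skandalis $\ell^1$-algebra structure.

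Second I would construct an approximate unit for $\ell^1(G,A)$. Let $(e_\lambda)_\lambda$ be an approximate unit for $A$ and let $\calf$ denote the directed set of finite subsets of the commuting semilattice of idempotents $E(G) = \{gg^{-1}: g\in G\}$, ordered by inclusion. For $F \in \calf$ let $p_F$ be the finite join of the elements of $F$, computed in the Boolean algebra generated by $F$ inside $\calm(A)$ (the join exists because the $gg^{-1}$ commute), and set
\[
u_{(F,\lambda)} \;=\; \sum_{p \in F'} e_\lambda\, p \cdot \delta_p,
\]
where $F'$ is a disjointification of $F$ in the semilattice $E(G)$. For any $a_g \delta_g \in C_c(G,A)$ with $a_g \in A_g = gg^{-1}(A)$, one checks that once $gg^{-1} \in F$ and $e_\lambda$ is large enough, $u_{(F,\lambda)} * (a_g \delta_g)$ differs from $a_g \delta_g$ in $\ell^1$-norm by at most $\|a_g - e_\lambda a_g\|$, and similarly on the right. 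By density of $C_c(G,A)$ in $\ell^1(G,A)$, this yields the desired approximate unit.

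With the approximate unit in hand, Corollary \ref{corollaryEll1} directly identifies $A \rtimes G$ with the enveloping $C^*$-algebra of $\ell^1(G,A)$, and the identification of involutive $\ell^1$-algebras carried out in the first step shows this is exactly Khoshkam and Skandalis' crossed product. The principal technical obstacle is the approximate-unit construction: one must verify that the finite joins inside $E(G)$ can be carried out multiplier-algebraically and that the resulting elements genuinely approximate arbitrary $a_g \delta_g$ from both sides in $\ell^1$-norm. Once this is settled the rest of the argument is bookkeeping.
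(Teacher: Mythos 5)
Your route (verify an approximate unit for $\ell^1(G,A)$, invoke Corollary \ref{corollaryEll1}, then match the $\ell^1$-structures) is not the paper's route, and it has two genuine gaps. First, the identification $G^*\cong G$ is false: the elementary equivalences of Definition \ref{simple_equivalences} do not force $g^*=g^{-1}$. Already for the trivial group $G=\{e\}$ the quotient $G^*$ is the four-element semigroup $\{e,e^*,ee^*,e^*e\}$ generated by two idempotents $p=e$, $q=e^*$ with $p=pqp$, $q=qpq$. (The collapse $U_{g^*}=U_{g^{-1}}$ of Corollary \ref{corollaryHilbertactionInverses} happens only in representations, not in $G^*$ itself, so $\ell^1(G,A)$, being built on $C_c(G^*,A)$, is genuinely larger than the Khoshkam--Skandalis $\ell^1$-algebra; one must argue at the level of representations rather than ``verbatim'' at the level of Banach algebras.) Second, and more seriously, the approximate-unit construction does not work as written: a ``disjointification of $F$ in the semilattice $E(G)$'' does not exist in a general idempotent semilattice (there are no complements), and if you instead disjointify the projections $\alpha_p\alpha_p^*$ inside $\calm(A)$, the resulting atoms are not of the form $\alpha_q\alpha_q^*\cdot A$ paired with a legitimate symbol $\delta_q$, so your $u_{(F,\lambda)}$ is not an element of $C_c(G^*,A)$. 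An inclusion--exclusion correction keeps you inside $C_c(G^*,A)$ but makes $\|u_{(F,\lambda)}\|_1$ blow up with $|F|$ (consider an infinite antichain of idempotents over $0$ acting trivially on $A=\C$: any $u$ with $u*\delta_{e_j}\approx\delta_{e_j}$ for $j\in F$ has $\|u\|_1\gtrsim 2|F|$), and the proof of the Proposition of Section \ref{sectionReprL1} needs the net $\pi(a y_i)$ to have \emph{bounded} strong limit points, which an unbounded approximate unit does not supply. So the hypothesis of Corollary \ref{corollaryEll1} is not established.

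The paper avoids this entirely. Its proof runs: any bounded $*$-representation of $\ell^1(G,A)$ factors through the Khoshkam--Skandalis crossed product (their enveloping $C^*$-algebra of $\ell^1$), and by their Theorem 5.7(b) every $C^*$-representation of that algebra is realized by a covariant representation of $A$; conversely every covariant representation of $A\rtimes_{\rm alg}G$ is $\ell^1$-bounded. Hence the universal covariant representation and the universal $\ell^1$-bounded representation induce the same $C^*$-norm, which is the assertion. In other words, the hard analytic content (representations of $\ell^1$ are automatically covariant) is imported from Khoshkam--Skandalis rather than rederived via an approximate unit. If you want a self-contained argument along your lines, you would either have to prove the covariance of $\ell^1$-representations directly for inverse semigroups, or restrict to situations where a bounded approximate unit genuinely exists (e.g.\ unital $G$, or $E(G)$ directed); as stated, the gap is real.
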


\begin{proof}
Let $\alpha$ be any bounded representation of $\ell^1(G,A)$ on Hilbert space. Then it factors through Khoshkam--Skandalis' crossed product $A \rtimes G$.
Any $C^*$-representation of $A \rtimes G$ is realized as a covariant representation of $A$ by \cite[Theorem 5.7.(b)]{1061.46047}, so the same must
be true for $\alpha$.  

%

Hence, a $C^*$-representation of $\ell^1(G,A)$ is $G$-covariant.
But then, since every $G$-covariant $C^*$-representation of $A \rtimes_{\rm alg} G$ is obviously bounded in $\ell^1(G,A)$-norm,
$A \rtimes_{\rm alg} G$ and $\ell^1(G,A)$ have the same universal $G$-covariant representation (which induces the $C^*$-crossed products).
\end{proof}

\section{$KK^G$ for unital $G$}

\label{sectionCompareWithGroupsKK}

In this section we will compare Kasparov's equivariant $KK$-theory with semimultiplicative sets equivariant $KK$-theory
when $G$ happens to be a group.
We shall then also introduce a unital version of $KK^G$-theory for unital semimultiplicative sets $G$,
where we let the unit of $G$ act as the identity on Hilbert modules and $C^*$-algebras.

Recall that two cycles $(\cale,T)$ and $(\cale,T')$ in $\E^G(A,B)$ are {\em compact perturbations} of each
other if $a (T - T') \in \calk(\cale)$ for all $a \in A$, and that then the straight line segment
from $T$ to $T'$ is an operator homotopy; in particular
$(\cale,T)$ and $(\cale,T')$ are homotopic in the sense of $KK^G$-theory
(see \cite{burgiSemimultiKK}).
We will denote Kasparov's equivariant $KK$-theory for groups $G$ (\cite{kasparov1981,kasparov1988}) by $\widetilde{KK^G}(A,B)$.

\begin{proposition} \label{propositionGgroupKK}
Let $G$ be a group (or a unital semimultiplicative set, see Remark \ref{remarkUnitalKK}). Let $A$ and $B$ be Hilbert $C^*$-algebras where the unit of $G$ acts identically on $A$ and $B$, respectively. Then
\begin{eqnarray*}
{KK^G} (A,B) &\cong& \widetilde{KK^G}(A,B) \oplus \widetilde{KK}(A,B).
\end{eqnarray*}
\end{proposition}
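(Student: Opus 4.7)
The plan is to decompose every $KK^G$-cycle by means of the projection coming from the unit. First I would show that $P:=U_e$ is an honest projection on $\cale$: the relation $e\odot e=e$ gives $U_e^{2}=U_e$, and the $G^{*}$-extension of Lemma~\ref{lemmaHilbertmoduleGstar} applied to $e^{*}e^{*}=e^{*}$ (in the group case simply $e^{-1}=e$) gives $(U_e^{*})^{2}=U_e^{*}$, so Lemma~\ref{IdemLemma} applies. Specialising the equivariance conditions (\ref{equivariantRep1}) and (\ref{equivariantRep3}) to $g=e$ and using that $e$ acts identically on $A$ shows that $P$ commutes with $\pi(A)$; similarly $P$ is $B$-linear. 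The identities $U_g=U_gU_e=U_eU_g$ then force every $U_g$ to annihilate $\cale_0:=(1-P)\cale$ and preserve $\cale_1:=P\cale$, and on $\cale_1$ the relation $U_gU_g^{*}=U_{gg^{-1}}=U_e=1_{\cale_1}$ (Corollary~\ref{corollaryHilbertactionInverses}) turns each $U_g$ into a unitary.

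Next I would compactly perturb $T$ into block-diagonal form. Setting $T':=PTP+(1-P)T(1-P)$, so that $T-T'=PT(1-P)+(1-P)TP$, the hypotheses $[P,\pi(a)]=0$ and $\pi(a)[P,T]\in\calk(\cale)$ (which is the specialisation of (\ref{defCycleCondition}) at $g=e$) combine, via a short commutator manipulation, to yield $\pi(a)(T-T')\in\calk(\cale)$ for all $a\in A$. Hence $(\pi,\cale,T)$ and $(\pi,\cale,T')$ represent the same class in $KK^G(A,B)$, and the latter splits as a direct sum of cycles $(\pi_i,\cale_i,T_i)$ for $i=0,1$. On $\cale_1$ the $G$-action is unitary with $U_e=1$, and (\ref{defCycleCondition}) collapses (via $g(1)=U_gU_g^{*}=1$) to the standard Kasparov condition $\pi_1(a)(g(T_1)-T_1)\in\calk(\cale_1)$, producing a class in $\widetilde{KK^G}(A,B)$. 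On $\cale_0$ all $U_g$ vanish, so the axioms of Definitions~\ref{definitionGHilbertModule} and \ref{definitionGstarEquivariance} and the cycle conditions (\ref{defCycleCondition}) are trivially satisfied, and by forgetting the $G$-action on $B$ one obtains a class in $\widetilde{KK}(A,B)$.

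Applying the same decomposition fibre-wise to cycles over $B[0,1]$ shows that the assignment descends to a homomorphism $\Phi\colon KK^G(A,B)\to\widetilde{KK^G}(A,B)\oplus\widetilde{KK}(A,B)$. An inverse $\Psi$ is defined by direct sum: it sends $([\cale_1,T_1],[\cale_0,T_0])$ to the cycle whose first summand carries its given unitary $G$-action and whose second summand is equipped with the zero $G$-action $U_g:=0$. Every relation in (\ref{hilbmodrel1})--(\ref{hilbmodrel4}), (\ref{equivariantRep1})--(\ref{equivariantRep4}) and (\ref{defCycleCondition}) is tautologically satisfied on the trivial summand, so $\Psi$ is well defined. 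One gets $\Phi\circ\Psi=\id$ because the $U_e$-decomposition of such a direct sum returns its summands untouched, while $\Psi\circ\Phi=\id$ holds up to the compact perturbation $T\leadsto T'$ of the previous paragraph.

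The main technical obstacle I anticipate is precisely the compact-perturbation estimate $\pi(a)(T-T')\in\calk(\cale)$ together with its parametrised version over $B[0,1]$: one must carefully combine $[P,\pi(a)]=0$ with $\pi(a)[P,T]\in\calk(\cale)$ to show that each off-diagonal block of $T-T'$, after multiplication by $\pi(A)$, is compact, and then argue that the straight-line operator homotopy used to pass from $T$ to $T'$ can be performed simultaneously on a homotopy cycle over $B[0,1]$ so that $\Phi$ respects homotopy classes.
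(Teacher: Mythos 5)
Your proposal is correct and follows essentially the same route as the paper: both decompose a cycle via the projection $u=U_e$ (justified by Lemma \ref{IdemLemma} and Corollary \ref{corollaryHilbertactionInverses}), pass to the block-diagonal compression $uTu+(1-u)T(1-u)$ as a compact perturbation, identify the $u$-corner with a $\widetilde{KK^G}$-cycle and the $(1-u)$-corner (with zero $G$-action) with a $\widetilde{KK}$-cycle, and check homotopy invariance fibrewise over $B[0,1]$. You actually supply more detail than the paper does on why $u$ commutes with $\pi(A)$ and why $\pi(a)(T-T')$ is compact, but the underlying argument is the same.
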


\begin{proof}
The proof of this proposition (which had also been suspected by the author) was indicated by an unkonwn referee.
Let $(\cale,T)$ be a cycle in $\E^G(A,B)$.
By Lemma \ref{IdemLemma} and Corollary \ref{corollaryHilbertactionInverses},
$U_e$ is a projection and a unit for all $U_g$, and $U_{g^{-1}}= U_g^*$, and so $U_g U_{g}^*= U_g^* U_g =U_e$ for all $g \in G$.
Hence, $KK^G(A,B)$ and $\widetilde{KK^G}(A,B)$ differ only by the fact that $\widetilde{KK^G}(A,B)$
is build up by cycles $(\cale,T) \in \widetilde{\E^G}(A,B)$
where $U_e$ acts identically on $\cale$.

Denote $u=U_e$. We aim to show that the map
\begin{eqnarray*}
&&\Phi_{A,B}:\E^G(A,B) \longrightarrow \widetilde{\E^G}(A,B) \oplus \widetilde{\E}(A,B)\\
&&\Phi_{A,B} (\cale,T) = (u \cale, uTu) \oplus ((1-u) \cale, (1-u)T(1-u))
\end{eqnarray*}
induces an isomorphism in $KK$-theory.
Homotopic elements in $\E^G(A,B)$ become homotopic elements in the image of $\Phi_{A,B}$ via the map $\Phi_{A,B[0,1]}$
(because $U_e \otimes \alpha_e = U_e \otimes 1$ on $\cale \otimes_{B[0,1]} B$).
The map $\Phi_{A,B}$ has an obvious canoncial inverse map $\Phi_{A,B}^{-1}$, which also respects homotopy.
Obviously we have $\Phi_{A,B} \Phi_{A,B}^{-1} =1$.
On the other hand,
$$\Phi_{A,B}^{-1} \Phi_{A,B}(\cale,T)= (\cale, uTu+(1-u)T(1-u))$$
is just a compact perturbation
of $(\cale,T)$. Hence also $\Phi_{A,B}^{-1} \Phi_{A,B} \sim 1$.
\end{proof}

\begin{remark} \label{remarkUnitalKK}
{\rm The above revealed difference between Kasparov's theory and
ours seems natural as usually lacking an identity in $G$,
$G$-actions are allowed to act degenerate on $C^*$-algebras or
Hilbert modules. This is reflected in the $KK^G$-theory. If,
however, one considers unital $G$'s one can neutralize the
difference to Kasparov's theory by assuming that the unit $1_G$ of
$G$ always acts as the identity on Hilbert modules and Hilbert $C^*$-algebras. Then
the whole $KK^G$-theory of \cite{burgiSemimultiKK} goes through
under this modification (so one has also an associative Kasparov product). This is clear as we only have to
take care that all used constructions of $G$-Hilbert modules
respect the unitization, and these are the tensor products
and the direct sum where it is obvious. Furthermore, one has to
ensure that under modified $KK^G$-theory the class $1$ in
$KK^G(\C,\C)$ associated to the cycle $(\C,0)$ (as used in Section
7 of \cite{burgiSemimultiKK}) exists; but this is also clear.
%
%
Actually, the proof of Proposition \ref{propositionGgroupKK} works (without essential modification)
for any unital semimultiplicative set $G$, that is, $KK^G$ is the
direct sum of the unital version of $KK^G$, where the unit of $G$
acts fully on Hilbert $C^*$-algebras and Hilbert bimodules, and
Kasparov's $\widetilde{KK}$.
}
\end{remark}


\section{Inversely generated semigroups}   \label{sectionInvGenSemigroups}


\begin{definition}   \label{definitionPartialIsometry}
{\rm
We call an element $g$ of an involutive semigroup $\overline G$ a {\em partial isometry} if it is invertible
with respect to the involution, that is, if $g g^* g = g$.
}
\end{definition}

Note that if $s$ is a partial isometry then $s^*$ is also one. Consequently, the set of partial isometries
of an involutive semigroup is self-adjoint.


\begin{definition}
{\rm
An {\em inversely generated semigroup} is an involutive semigroup $\overline G$
which is generated by its partial isometries.
In other words, for every $g \in \overline G$ there exist partial isometries $s_1,\ldots,s_n \in \overline G$
such that
$g = s_1 \ldots s_n$.
}
\end{definition}

The standard example for an inversely generated semigroup is the involutive semigroup $G^*$ for a semimultiplicative set $G$ (Definition \ref{simple_equivalences}).
(The set of partial isometries of $G^*$ might differ from $G$, since there could exist more partial isometries.)

\begin{definition}
{\rm
A {\em $*$-morphism} between involutive semigroups
is a map respecting the multiplication and the involution.
A {\em $*$-antimorphism} between involutive semigroups is an involution respecting
semigroup antimorphism.
}
\end{definition}

We shall write $G$ for the set of partial isometries of an inversely generated semigroup $\overline G$.
$G$ is a semimultiplicative set which usually is not associative.
(One can easily construct examples where $st \in G$ and $(st)u \in G$ are partial isometries, but $t u \notin G$ is not one; this contradicts the associativity condition.)




 

\begin{definition}   \label{DefInvGenSemiAlg}
{\rm
A $\overline G$-Hilbert $C^*$-algebra is a semimultiplicative set $G$-Hilbert $C^*$-algebra $A$
where the action maps $\alpha,\alpha^*:G \longrightarrow \Endo(A)$
extend
to a map $\overline \alpha: \overline G \longrightarrow \Endo(A)$
%
\begin{eqnarray}
\overline \alpha(g) &=& \alpha(g), \label{Haction1}\\
\overline \alpha(g^*) &=& \alpha^*(g),  \label{Haction2}\\
\overline \alpha(h k) &=& \overline \alpha(h) \overline \alpha(k)  \label{Haction3}
\end{eqnarray}
for all $g \in G$ and $h,k \in \overline G$. }
\end{definition}

Since $\overline \alpha$ maps into the partial isometries of $A$ which have commuting source and range projections (in the center of the multiplier algebra),
$\overline \alpha$ is actually a $*$-morphism.

\begin{definition}    \label{DefInvGenSemiModule}
{\rm
A $\overline G$-Hilbert module is a Hilbert module which is endowed with a general semimultiplicative
set $G$-action $\alpha$ that extends to a map $\overline \alpha$ via the formulas (\ref{Haction1})-(\ref{Haction3}).
}
\end{definition}

Note that the $G$-action $\overline \alpha$ on a Hilbert module is usually not realized by partial isometries; only the partial isometries
of $\overline G$, that is the elements of $G$, go over to partial isometries
(because a semimultiplicative set $G$-action is always realized by partial isometries).
These partial isometries determine how we have to define the other elements of $\overline G$,
as they can be written as products of elements of $G$. These
products, however, need not be partial isometries on the Hilbert module.

We may equivalently reformulate Definition \ref{DefInvGenSemiAlg} (and similarly Definition \ref{DefInvGenSemiModule}) by saying that the $G^*$-action $\hat \alpha$ on $A$ factors through $\overline G$.
$$\begin{xy}
\xymatrix{
G^*  \ar[r]^{\hat \alpha} \ar[d]^p  &
A \\
\overline G \ar[ur]_{\overline \alpha}   &
}
\end{xy}$$
Here, $p$ is the quotient $*$-morphism determined by $p(g)= g$ for all $g \in G$.
Indeed, if $\alpha$ allows an extension $\overline \alpha$ given by (\ref{Haction1})-(\ref{Haction3}) then
the above diagram commutes. On the other hand, if the above diagram exists, $\overline \alpha$ is an extension of $\alpha$
satisfying (\ref{Haction1})-(\ref{Haction3}).

Because of this fact we view a $\overline G$-Hilbert module also as a $G$-Hilbert module with the property that the induced $G^*$-map factors
through $\overline G$. We say sloppy that the $G$-Hilbert module factors through $\overline G$.

\begin{lemma}  \label{lemmaCycleCondGstar}
Identities (\ref{defCycleCondition}) hold also for all $g \in G^*$.
\end{lemma}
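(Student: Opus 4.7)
The plan is to proceed by induction on the length of a word representative $g = g_1^{\epsilon_1} \cdots g_n^{\epsilon_n}$ (with $g_i \in G$, $\epsilon_i \in \{1,*\}$) of the given element $g \in G^*$, simultaneously verifying for each such $g$ that all three elements $g(T) - g(1)T$, $[g(1),T]$, and $[g^*(1),T]$ lie in $I_A(\cale)$. Before starting the induction I would isolate two closure properties of $I_A(\cale)$ that come from the $G^*$-equivariance of $\pi$ (Lemma \ref{lemmaHilbertCstarGstar}(ii)): first, the identity $\pi(a) U_g = U_g \pi(g^*(a))$ (derived by inserting $U_g^* U_g$ and using $[U_g^* U_g,\pi(a)]=0$) shows that $I_A$ is stable under left and right multiplication by $U_g$ and $U_{g^*}$ for every $g \in G^*$; second, since the projections $U_g U_{g^*}$ and $U_{g^*} U_g$ commute with $\pi(A)$, $I_A$ is stable under two-sided multiplication by them.

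The base case $n=1$ splits into $g\in G$, where the claim is precisely the cycle condition, and $g = h^*$ for $h\in G$. In the second case, $[g(1),T] = [U_h^* U_h,T]$ and $[g^*(1),T] = [U_h U_h^*,T]$ are the cycle conditions for $h$; for the remaining element $U_h^* T U_h - U_h^* U_h T$ I would left-multiply $h(T)-h(1)T \in I_A$ by $U_h^*$ and right-multiply by $U_h$ (allowed by the first closure property) to obtain $U_h^* U_h T U_h^* U_h - U_h^* T U_h \in I_A$, and then close the remaining gap $U_h^* U_h T U_h^* U_h - U_h^* U_h T = [U_h^* U_h,T](U_h^* U_h - 1)$ using $[U_h^* U_h,T]\in I_A$ together with the second closure property (since $U_h^* U_h - 1$ commutes with $\pi(A)$).

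In the inductive step I would factor $g = g'g''$ with $g',g''\in G^*$ of smaller length, so that $U_g = U_{g'} U_{g''}$, $g(1) = U_{g'} g''(1) U_{g'^*}$, and $g^*(1) = U_{g''^*} g'^*(1) U_{g''}$. Expanding each of the three target expressions via the graded Leibniz rule (with $U$'s zero-graded and $T$ odd) produces a sum of terms of three kinds: (i) conjugations $U_{g'} X U_{g'^*}$ with $X$ arising from the $g''$-cycle conditions; (ii) one-sided products $Y U_{g'^*}$ or $U_{g'} Z$ with $Y,Z$ arising from the $g'$-cycle conditions; and (iii) mixed terms such as $U_{g'} g''(1) [U_{g'^*},T]$ in which a $g''$-projection sits between a $g'$-commutator and its surroundings. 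Terms of types (i) and (ii) land in $I_A$ by the induction hypothesis combined with the two closure properties; type (iii) I would treat by sliding $g''(1)$ across $T$ modulo $I_A$ (legal because $[g''(1),T]\in I_A$), thereby converting the term into a conjugate of a $g'$-cycle expression plus a remainder already in $I_A$.

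The chief obstacle is the bookkeeping of these type (iii) cross terms: because $I_A$ is not a two-sided ideal of $\call(\cale)$, one cannot multiply freely on both sides, and the argument depends delicately on the centrality of $g''(1)$ with respect to $\pi(A)$ together with the approximate centrality $[g''(1),T]\in I_A$, which in combination license the sliding of the projection through $T$ at the cost of a commutator that is itself in $I_A$ by induction. Everything else reduces to routine bookkeeping once the two closure properties and the Leibniz expansion are in place.
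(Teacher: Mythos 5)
Your base case is fine, but the inductive step rests on a closure property that is false and on a factorization that does not give you the identities you need. First, the claim that $I_A(\cale)$ is stable under \emph{one-sided} multiplication by $U_g$ does not follow from $\pi(a)U_g = U_g\pi(g^*(a))$: that identity only converts $\pi(a)(U_gx)$ into $U_g(\pi(g^*(a))x) = U_gk$ with $k\in\calk(\cale)$, and since $U_g$ is not adjoint-able one does \emph{not} have $U_g\calk(\cale)\subseteq\calk(\cale)$. Concretely, take $A=B=\cale=C_0(\N)$ with $\pi$ the left multiplication and $U_g=\alpha_g$ the shift $\alpha_g(f)(x)=f(x+1)$; then $\calk(\cale)$ consists of the multiplication operators, $L_d\in\calk(\cale)\subseteq I_A(\cale)$, but $U_gL_d$ is a weighted shift and $\pi(a)U_gL_d$ is never a multiplication operator, so $U_gL_d\notin I_A(\cale)$. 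What is actually available, and what the paper's sketch uses, is stability of $\calk(\cale)$ and $I_A(\cale)$ under the two-sided conjugations $x\mapsto U_wxU_{w^*}$ ($w\in G^*$) and under multiplication by the operators $h(1)=U_hU_{h^*}$, $h^*(1)$ (which commute with $\pi(A)$ by equivariance). Your type~(ii) terms, justified only by the one-sided property, are therefore not covered.

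Second, splitting $g=g'g''$ with \emph{both} factors arbitrary shorter words leaves you without the key identity $U_{g'}=U_{g'}U_{g'^*}U_{g'}$: for a general word $g'\in G^*$ the operator $U_{g'}$ is not a partial isometry (the paper warns of exactly this after Lemma \ref{lemmaHilbertmoduleGstar}), so you cannot insert $U_{g'^*}U_{g'}$ next to $T$ in order to recognize a $g'$-cycle expression. If you carry out your "sliding" of $g''(1)$ you end up needing to compare $U_{g'}g''(1)U_{g'^*}T$ with $U_{g'}g''(1)TU_{g'^*}$, i.e.\ to control $U_{g'}g''(1)(U_{g'^*}T-TU_{g'^*})$, and no cycle condition applies to the naked commutator $U_{g'^*}T-TU_{g'^*}$. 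The paper's induction avoids both problems by peeling off a \emph{single} letter $g_0\in G'$ from the left of $g=g_0h$: one first conjugates the inductive hypothesis $U_hTU_{h^*}-TU_hU_{h^*}\in I_A(\cale)$ by $U_{g_0}$ so that $T$ lands immediately to the right of $U_{g_0}$, then uses the genuine partial-isometry identity $U_{g_0}=U_{g_0}U_{g_0}^*U_{g_0}$ together with the original cycle condition $[U_{g_0}^*U_{g_0},T]\in I_A(\cale)$ to move $T$ past $U_{g_0}$, every error term having the controlled form $U_w\bigl(z\cdot h(1)\bigr)U_{w^*}$ with $z\in I_A(\cale)$. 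To repair your argument, restrict the factorization to $g'=g_0\in G'$ and replace the one-sided closure property by the conjugation/projection properties above.
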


\begin{proof}
We leave the inductive proof to the reader, and sketch only one identity modulo $I_A(\cale)$;
note that $g(\calk(\cale)), g^*(\calk(\cale)) \subseteq \calk(\cale)$ for all $g \in G$. For $g \in G$ and some $h \in G^*$ (given by inductive hypothesis) we have
$$U_g U_h T U_h^* U_g^* \equiv U_g T U_h U_h^* U_g^* \equiv U_g T U_g^* U_g U_h U_h^* U_g^* \equiv T U_g U_h U_h^* U_g^*.$$
\end{proof}

A $G$-equivariant homomorphism $\pi:A \longrightarrow \call(\cale)$ (Definition \ref{definitionGstarEquivariance}) is automatically $G^*$-equivariant by Lemma
\ref{lemmaHilbertCstarGstar} (ii). Thus it is also $\overline G$-equivariant when the appearing $G$-Hilbert module $\cale$ and $G$-Hilbert $C^*$-algebra $A$
factor through $\overline G$.
Such a similar fact can also be said for a cycle $(\cale,T) \in \E^G(A,B)$.
By Lemma \ref{lemmaCycleCondGstar}, identites (\ref{defCycleCondition}) hold also for $g \in \overline G$ if
all Hilbert modules $\cale, A$ and $B$ factor through $\overline G$.
The following definition seems thus natural.


\begin{definition}   \label{defKKInvGenSemi}
{\rm We define $\overline G$-equivariant $KK$-theory in the same way as $KK^G$-theory but with the addition that all appearing $G$-Hilbert modules
and $G$-Hilbert $C^*$-algebras factor through $\overline G$.
}
\end{definition}

In other words, $KK^{\overline G}$-theory is build up by $\overline G$-Hilbert modules rather than by $G$-Hilbert modules as in $KK^G$-theory.

It is easy to see that the category of $\overline G$-Hilbert modules is stable under tensor products and direct sums.
Also, any Hilbert module is a $\overline G$-Hilbert module under the trivial $\overline G$-action.
We have thus checked
that all discussion and theorems
like the Kasparov product in \cite{burgiSemimultiKK} carry over from $KK^G$ to $KK^{\overline G}$
(compare with Remark \ref{remarkUnitalKK}).



We say a representation $\phi: \F(G,A) \longrightarrow B(H)$ factors through $\overline G$ if the restriction map $\phi|_{G^*}$ factors through $\overline G$. (Analogously and equivalently, the $G$-equivariant representation $(\phi|_A,\phi|_G,H)$ is said to factor through $H$).
We prefer it to view a crossed product of $A$ by $\overline G$
as a special crossed product of $A$ by $G$
and introduce the following definition.

%
%
\begin{definition}   \label{defCrossedProductInvGenSemi}
{\rm
The full crossed product $A \rtimes \overline G$ is the norm closure of $\phi^{\overline G}(A \rtimes_{\rm alg} G)$, where
$\phi^{\overline G}$ denotes the universal representation of $\F(G,A)$ which factors through $\overline G$.
}
\end{definition}


\section{Hilbert bimodules over full crossed products}

\label{sectionDescent}

In the remainder of this paper we are going to prove the descent homomorphism.
In this and the remaining sections $H$ and $G$ denote discrete countable semimultiplicative sets.
We may either assume that
$H$ and $G$ have units $1_H$ and $1_G$ and treat everything in the unital world of $KK$-theory
(see Remark \ref{remarkUnitalKK}),
and define the product of $H$ and $G$ by $H \times G$;
or we consider the non-unital version, in this case defining the product of $H$ and $G$ as the
semimultiplicative set $H \sqcup G \sqcup H \times G$ with multiplications
$$h \cdot g := (h,g), \,  h \cdot (h', g') := (h h' , g'),\, (g, h) \cdot(g', h') := (g g', h h')$$
and so on
for $h,h'\in H$ and $g,g' \in G$,
and denote this product, by sloppy but suggestive notation,
still as $H \times G$.
In any case, a morphism $H \times G \longrightarrow K$ is determined by its restriction to $H$ and $G$,
where $H$ and $G$ are identified with $H \times 1_G$ and $1_H \times G$, respectively, in the unital case.

For all $H \times G$-actions on Hilbert modules or $C^*$-algebras we
require that the induced $H^*$-actions and $G^*$-actions (in the
sense of Lemmas \ref{lemmaHilbertmoduleGstar} and
\ref{lemmaHilbertCstarGstar}) commute: the point is that $h^*$
may not commute with $g$ otherwise $(h \in H, g \in G$). This
requirement also affects the definition of $KK^{H \times G}$, and in
this sense the notion $KK^{H \times G}$ is suggestive but sloppy.
(See the discussion in Remark \ref{remarkUnitalKK} why we can
slightly adjust equivariant $KK$-theory: Actually we only need
stability under tensor products, direct sums, and the
existence of $1=(\C,0)$ in $KK^G(\C,\C)$.)

Let $l \in \{\emptyset,s,r,i\}$ and $D$ a $G$-Hilbert $C^*$-algebra.
Let $\phi_{D,G,l}$ be the representation of $\F(G,D)$ induced by the
universal $G$-covariant representation (in case that $l=\emptyset$), or the universal
strong $G$-covariant representation (when $l=s$), or the reduced representation of $D$
(when $l=r$).

The case $l=i$ requires that we are given an inversely generated semigroup denoted by $\overline{G}$ and $\overline H$, and
$G$ and $H$, respectively, denote their subsets of partial isometries.
In this case all appearing $G$-Hilbert modules and $G$-Hilbert $C^*$-algebras are supposed to factor
through $\overline G$ (and similarly so for $H$ and $G \times H$) in accordance to Definition \ref{defKKInvGenSemi}.
If $l=i$ then we need to work with $\overline{G}$-equivariant $KK$-theory, that is, $KK^{G \rtimes H}$ means then actually 
$KK^{{\overline G} \rtimes {\overline H}}$ in this and subsequent sections.
Moreover, $\phi_{D,G,i}$ denotes the universal $\overline{G}$-factorizing $G$-covariant representation
of $D$, and $D \rtimes_i G$ will stand for $D \rtimes \overline{G}$
(Definition \ref{defCrossedProductInvGenSemi}).

We shall sometimes write $\phi_l$ rather than
$\phi_{D,G,l}$ if $D$ and $G$ are clear from the context.
Recall that
$$D \rtimes_l G \cong \overline{\phi_{D,G,l}(D \rtimes_{\rm alg} G)}.$$
We denote
$$G'=\{g , g^* \in G^*|\, g \in G\}.$$
%

If $l=r$ then we deal with the reduced crossed product, and in this case we assume that
$G$ is an associative semimultiplicative set with left cancellation, and all $G$-Hilbert modules
and $G$-Hilbert $C^*$-algebras have transferred left cancellation.
So in this sense we also have a modified $KK^G$-theory as we adapt it in the sense that it is build up by modules with left transferred cancellation (confer Remark \ref{remarkUnitalKK} why we can easily slightly adapt $KK$-theory).
However, we do not require cancellation for $H$ or its actions.
If $l=r$ then we assume that $B=\C$ equipped with the trivial $G$-action.

We will assume that $G$ has a unit, partially because of
non-degenerateness concerns as in Lemma \ref{lemmaIsomorphE12}.
Nevertheless we shall sometimes try to avoid using a unit.

Assume that $A,B$ are $(H \times G)$-Hilbert $C^*$-algebras
and $\cale$ is a $(H \times G)$-Hilbert $B$-module.
The $G$-action on $\cale$ is denoted by $U$.

\begin{lemma} \label{lemmaHHilbertCstar}
(i) $B \rtimes_l G$ is a $H \times G$-Hilbert $C^*$-algebra (where the $G$-action is trivial).

(ii)
Under a different $H \times G$-action denoted by $V$, $B \rtimes_l G$ is a $H \times G$-Hilbert
module over the $H \times G$-Hilbert $C^*$-algebra $B \rtimes_l G$.
This Hilbert module is denoted by $B \rtimes_l^{\rm Mod} G$.
\end{lemma}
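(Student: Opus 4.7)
The plan is the following. In part (i), equip $B \rtimes_l G$ with the $H \times G$-action that is trivial on $G$ and acts coefficient-wise by the $H$-action on $B$; in part (ii), define a second $H \times G$-action $V$ that coincides with the part-(i) action on $H$ but acts on $G$ by left multiplication, viewing $B \rtimes_l G$ as a right module over itself.

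For part (i) I set $\alpha_h(b \cdot k) := h(b) \cdot k$ and $\alpha_h^*(b \cdot k) := h^*(b) \cdot k$ on the convolution description of $B \rtimes_{\rm alg} G$ from Lemma \ref{lemmaReprConvolutionAlg}, where $h \in H$, $k \in G^*$, $b \in A_k$; the $G$-part acts trivially. Since the $H$- and $G$-actions on $B$ commute (this is built into $B$ being an $H \times G$-Hilbert $C^*$-algebra), $\alpha_h$ preserves both the convolution $(b_1 \cdot k_1)(b_2 \cdot k_2) = b_1 k_1(b_2) \cdot k_1 k_2$ and the involution $(b \cdot k)^* = k^*(b^*) \cdot k^*$, so is a $*$-endomorphism of $B \rtimes_{\rm alg} G$. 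To extend $\alpha_h$ continuously to $B \rtimes_l G$: for $l \in \{\emptyset, s, i\}$, if $(\pi, u)$ is any (strong, $\overline G$-factoring) covariant representation of $B$, then $(\pi \circ h, u)$ is of the same type (the covariance relations (\ref{equivariantRep1})-(\ref{equivariantRep4}) are preserved under the substitution $\pi \leadsto \pi \circ h$, thanks to $h(g(b)) = g(h(b))$), and universality yields a $*$-homomorphism $\alpha_h : B \rtimes_l G \to B \rtimes_l G$; for $l = r$, replacing $\sigma$ by $\sigma \circ h$ in the regular representation and invoking Corollary \ref{corollaryLeftRegReprIndepenceRepPi} gives the extension. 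The axioms of Definition \ref{definitionHilbertAlgebra} then pass from $B$ to $B \rtimes_l G$ coefficient-wise: $\alpha_h \alpha_h^* \alpha_h = \alpha_h$ follows from $h h^* h = h$; the source/range projections $\alpha_h^* \alpha_h$, $\alpha_h \alpha_h^*$ act coefficient-wise as the central projections $h^* h$, $h h^*$ of $M(B)$, hence are self-adjoint (in fact central) multipliers of $B \rtimes_l G$; and the inner-product relations are immediate from $\langle x, y \rangle = x^* y$ together with $\alpha_h$ being $*$-preserving.

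For part (ii), I define $V_h := \alpha_h$ as in part (i), and $V_g$ to be left multiplication by $g$: algebraically, $V_g(b \cdot k) := g(b) \cdot (gk)$ and $V_g^*(b \cdot k) := g^*(b) \cdot (g^* k)$, using the identity $g \cdot b = g(b) \cdot g$ in $\F(G,B)$ from (\ref{ginfa}). In any representation $\phi_l$ used to build $B \rtimes_l G$, $V_g$ is realised as left multiplication by the norm-contractive partial isometry $\phi_l(g)$, and hence is bounded and extends continuously to all of $B \rtimes_l G$. The Hilbert-module axioms of Definition \ref{definitionGHilbertModule} are then verified directly: $V_g V_g^* V_g = V_g$ from $g g^* g = g$; $V_g^* V_g$ and $V_g V_g^*$ are left multiplication by the self-adjoint elements $g^* g, g g^*$ of $\F(G,B)$, hence self-adjoint as module operators; the right-module compatibility $V_g(\xi \beta) = V_g(\xi) g(\beta) = V_g(\xi) \beta$ holds because left and right multiplications in $B \rtimes_l G$ commute and the part-(i) $G$-action on the algebra is trivial; and $\langle V_g \xi, \eta \rangle = (g \xi)^* \eta = \xi^* g^* \eta = \langle \xi, V_{g^*} \eta \rangle = g(\langle \xi, V_{g^*} \eta \rangle)$, again by triviality of the algebra $G$-action. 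The corresponding identities for $V_h$ reduce to part (i), and $V_h$ and $V_g$ commute (so assemble into a genuine $H \times G$-action) because $h(g(b)) = g(h(b))$.

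The main obstacle I anticipate is the case-by-case verification that $\alpha_h$ and $V_g$ extend continuously from $B \rtimes_{\rm alg} G$ to each of the four completions $B \rtimes_l G$: the universal cases $l \in \{\emptyset, s, i\}$ are handled cleanly by universality (for $l = i$ one additionally observes that the $\overline G$-factorization is preserved, since the generating set $G \subset \overline G$ is fixed under $\alpha_h$ and $V_g$), but the reduced case $l = r$ requires an explicit identification of how $\alpha_h$ and $V_g$ are implemented on the regular representation space $H \otimes \ell^2(G)$.
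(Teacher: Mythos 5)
Your construction is essentially identical to the paper's: in (i) the action $\alpha_{h\times g}\big(\phi_l(b_k k)\big)=\phi_l\big(h(b_k)\,k\big)$ with trivial $G$-part, extended to the completion via the universal property (Lemma \ref{lemmaCovRepInducedMap}) in the full cases and by a norm estimate in the regular representation for $l=r$; and in (ii) $V_{h\times g}=\phi_l(g)\,\alpha_h(\cdot)$, i.e.\ left multiplication by the contraction $\phi_l(g)$ composed with the part-(i) $H$-action, exactly as in (\ref{descentVaction2}). The one imprecision is your appeal to Corollary \ref{corollaryLeftRegReprIndepenceRepPi} in the reduced case: that corollary assumes $\sigma$ faithful and nondegenerate, whereas $\sigma\circ h$ is neither (it kills $(1-h^*h)B$), so you should instead verify the pointwise inequality $\big\|\phi_r\big(\sum_k h(b_k)k\big)\xi\big\|\le\big\|\phi_r\big(\sum_k b_k k\big)\xi\big\|$ directly, as the paper does, using that $h$ restricts to a $*$-isomorphism $B_{h^*}\to B_h$ and commutes with the $G^*$-action.
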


\begin{proof}
(i) Let $\phi_l = \phi_{B, G,l}$.
We endow $B \rtimes_l G$ with the $H \times G$-Hilbert $C^*$-action
\begin{equation}   \label{actionalphahg}
\alpha_{h \times g} \big( \phi_l(b_k k) \big) = \phi_l \big(h(b_k) k \big) =: \psi(b_k  k)
\end{equation}
for $k \in G^{*}, b_k \in B_k$ and $h \times g \in (H \times G)'$.
(So the $G$-action is trivial.)
We claim that $\psi:\F(G,B) \longrightarrow B \rtimes_l G$ is a representation. We need to show that $(\psi|_B, \psi|_G)$ is $G$-covariant,
where $\psi(b)= \phi_l(h(b))$ and $\psi(g)= \phi_l(g)$.
Let us check (\ref{equivariantRep1}). In $\phi_l(\F(G,B))$ we have
\begin{eqnarray*}
&& \psi(g) \psi(g)^*  \psi(b) = \phi_l(g) \phi_l(g)^* \phi_l(h(b)) = \phi_l(g g^* h(b) ) = \phi_l(g g^{*}( h(b)) g g^*)\\
&=& \phi_l(h(b) g g^*) = \psi(b) \psi(g) \psi(g)^*,
\end{eqnarray*}
where
$g g^{*}(b) g g^* = b g g^*$ is identity (\ref{ident2}) (Lemma \ref{lemmaIsomFGA} (ii)).

In case that $l$ indicates the full or full strong crossed product,
the map $\alpha_{h \times g}$ extends to a well defined endomorphism of $B \rtimes_l G$ by Lemma
\ref{lemmaCovRepInducedMap}.
For the reduced crossed product we see the boundedness of $\alpha_{h \times g}$ by direct evaluation of the left regular representation of
Definition \ref{definitionLeftRegular}:
one computes
$$\Big\|\phi_r \Big(\sum_{k \in G^*} h(b_k ) k \Big )\xi \Big\| \le \Big \| \phi_r \Big(\sum_{k \in G^*} b_k k \Big ) \xi \Big\|$$
for all
$\xi \in H \otimes \ell^2(G)$.

It remains to check the identities of Definition \ref{definitionGHilbertModule}
to see that $\alpha$ is a $G \times H$-action on $B \rtimes_l G$. For instance, by Lemma \ref{lemmaHilbertCstarGstar} (iii) one has
\begin{eqnarray*}
&&  \big\langle \alpha_{h \times g} \, \phi_l(b_k k) , \phi_l(c_m m)  \big\rangle \,\,=\,\, \phi_l \big(k^* \,h(b_k^*) c_m \, m\big)
\,\,=\,\, \phi_l \big(k^* \,h(b_k^*\, h^* (c_m)) \, m\big)\\
&=& \alpha_{h \times g}\,\big \langle \phi_l(b_k k) , \alpha_{h \times g}^* \, \phi_l(c_m m) \big) \big \rangle.
\end{eqnarray*}

(ii)
We make $B \rtimes_l G$ a Hilbert $B \rtimes_l G$-module $B \rtimes_l^{\rm Mod} G$ with inner product $\langle x,y\rangle = x^* y$
and
$(H \times G)$-Hilbert $B \rtimes_l G$-module action
\begin{equation}    \label{descentVaction}
V_{h \times g} \big( \phi_l(b_k k) \big) = \phi_l \big(g \big(h(b_k) \big) g k \big )
\end{equation}
for all $k \in G^{*},b_k \in B_k$ and $h \times g \in (H \times G)'$.
Note that
\begin{equation}    \label{descentVaction2}
V_{h \times g} \big(\phi_l(x) \big)= \phi_l(g) \, \alpha_h (\phi_l(x) )
\end{equation}
($x \in A \rtimes_{\rm alg} G$),
which shows the boundedness of $V_{h \times g}$.
Then $V$ is an action, and we shall demonstrate only one rule:
$$\big \langle V_g \phi_l(x), \phi_l(y) \big  \rangle = \phi_l(x^*) \phi_l(g^*) \phi_l(y) =
 \big \langle \phi_l(x), V_g^* \phi_l(y) \big \rangle  = \alpha_g \big \langle \phi_l(x), V_g^* \phi_l(y) \big \rangle.$$
\end{proof}

\begin{lemma}   \label{lemmaLeftMultiCross}
There is a $H \times G$-equivariant homomorphism $\tau:B \longrightarrow \call(B \rtimes_l^{\rm Mod} G)$
given by left multiplication, i.e.
$$\tau(b) \big(\phi_l(x)\big ) = \phi_l(b) \phi_l(x)$$
for $b \in B$ and $x \in B \rtimes_{\rm alg} G$.
\end{lemma}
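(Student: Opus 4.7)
The plan is to view $\tau(b)$ as left multiplication by $\phi_l(b) \in B\rtimes_l G$ on the Hilbert module $B\rtimes_l^{\rm Mod} G = B\rtimes_l G$ (equipped with the inner product $\langle x,y\rangle = x^*y$), and then to verify the four equivariance axioms of Definition \ref{definitionGstarEquivariance} by using the factorization of $V_w$ supplied by (\ref{descentVaction2}).

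First I would check that $\tau(b)$ is a well-defined adjointable operator. Left multiplication by $\phi_l(b)$ is bounded with norm $\le \|\phi_l(b)\|\le\|b\|$, and the identity
\[
\langle \phi_l(b)\phi_l(x),\phi_l(y)\rangle \;=\; \phi_l(x)^*\phi_l(b)^*\phi_l(y) \;=\; \langle \phi_l(x),\phi_l(b^*)\phi_l(y)\rangle
\]
shows that its adjoint is $\tau(b^*)$, so $\tau(b)\in\call(B\rtimes_l^{\rm Mod}G)$. That $\tau$ is a $*$-homomorphism is then immediate from the corresponding property of $\phi_l$.

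Next I would verify the equivariance axioms (\ref{equivariantRep1})--(\ref{equivariantRep4}). The central structural observation is that (\ref{descentVaction2}) gives the factorization $V_w = L_{\phi_l(g)}\circ \alpha_h$ on the dense subspace $\phi_l(B\rtimes_{\rm alg}G)$, for $w=h\times g\in (H\times G)'$, where $\alpha_h$ is the $*$-endomorphism of $B\rtimes_l G$ from Lemma \ref{lemmaHHilbertCstar}(i). Taking the $*$-adjoint gives $V_w^* = \alpha_h^*\circ L_{\phi_l(g)^*}$ (verified by checking $V_wV_w^*V_w=V_w$ using Lemma \ref{characterizationlemmaPartIso}). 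For (\ref{equivariantRep3}) one then computes
\[
V_w\tau(b)V_w^* \;=\; L_{\phi_l(g)}\circ \alpha_h\circ L_{\phi_l(b)}\circ \alpha_h^*\circ L_{\phi_l(g^*)},
\]
moves $\alpha_h$ past $L_{\phi_l(b)}$ using that $\alpha_h$ is a $*$-homomorphism to obtain $L_{\phi_l(h(b))}\circ (\alpha_h\alpha_h^*)$, and then collapses $\phi_l(g)\phi_l(h(b))\phi_l(g)^* = \phi_l(g(h(b)))\phi_l(g)\phi_l(g)^* = \phi_l(w(b))\phi_l(g)\phi_l(g)^*$ via identity (\ref{ident2}) (which holds in $G^*$ by Lemma \ref{lemmaIsomFGA}(ii)), together with the commutativity requirement $g\circ h = h\circ g = w$ on $B$. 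A parallel computation with $b=1$ (or, to avoid unitality, by examining the projections directly) identifies $V_wV_w^* = L_{\phi_l(g)\phi_l(g)^*}\circ(\alpha_h\alpha_h^*)$, giving (\ref{equivariantRep3}); the identity (\ref{equivariantRep4}) is its $*$-adjoint. The commutation relations (\ref{equivariantRep1})--(\ref{equivariantRep2}) follow because both $\phi_l(g)\phi_l(g)^*$ and the range projection $\alpha_h\alpha_h^*$ lie in $Z\calm(B\rtimes_l G)$ (by identity (\ref{ident1}) and Lemma \ref{lemmaCharacterizationHilbertCstar} respectively), so the associated left-multiplication operators commute with every $\tau(b)=L_{\phi_l(b)}$.

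The main obstacle is managing $V_w^*$ without an a priori formula and handling the non-unital subtleties that arise when $V_wV_w^*$ is written as left multiplication by a projection in $\calm(B\rtimes_l G)$; this is circumvented by working entirely on the dense subspace $\phi_l(B\rtimes_{\rm alg}G)$, using that the relevant source and range projections are central, and appealing to Lemma \ref{lemmaCharacterizationHilbertCstar} to push them past $\tau(b)$.
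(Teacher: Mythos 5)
Your proposal is correct and follows essentially the same route as the paper: a direct verification of (\ref{equivariantRep3})--(\ref{equivariantRep4}) on the dense subspace $\phi_l(B\rtimes_{\rm alg}G)$ using the factorization (\ref{descentVaction2}) of $V_{h\times g}$, identity (\ref{ident2}), and the standing requirement that the $G$- and $H$-actions (and their adjoints) commute. You additionally spell out the adjointability of $\tau(b)$ and the commutator conditions (\ref{equivariantRep1})--(\ref{equivariantRep2}) via centrality of the source/range projections, which the paper leaves implicit.
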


\begin{proof}
We only check (\ref{equivariantRep3})-(\ref{equivariantRep4}).
Let $k \in G^*, g \times h \in (G \times H)', b \in B$ and $c_k \in B_k$.
Then we have
\begin{eqnarray*}
&& V_{g \times h} \, \tau(b) \, V_{g \times h}^* \,\, \phi_l(c_k k)
\,\, = \,\,  V_{g \times h} \, \tau(b) \,\, \phi_l(g^*) \,\phi_l \big(h^{*}(c_k )k \big)\\
&=&  \phi_l(g) \, \phi_l \big( h(b  g^{*} h^{*}(c_k )) g^*k  \big )
\,\,= \,\,  \phi_l \big( g h(b  g^{*} h^{*}(c_k )) g g^*k \big )\\
& =&  \tau  (gh(b) ) \, V_{h \times g} \, V_{h \times g}^* \,\, \phi_l(c_k k).
\end{eqnarray*}
Notice that here we used the requirement that the $G$- and $H$-actions (and their adjoint actions) commute.
\end{proof}

\begin{definition}
{\rm
Define a $H \times G$-Hilbert module over $B \rtimes_l G$ by
$$\cale \rtimes_l G = \cale \otimes_B (B \rtimes_l^{\rm Mod} G)$$
(internal tensor product of $H \times G$-Hilbert modules),
where $B$ acts on $B \rtimes_l^{\rm Mod} G$ by left multiplication (Lemma \ref{lemmaLeftMultiCross}).
}
\end{definition}

By definition, $\cale \rtimes_l G$ is a $H \times G$-Hilbert module
over the $H \times G$-Hilbert $C^*$-algebra $B \rtimes_l G$
under the diagonal action $U \otimes V$ (see \cite[Lemma 4]{burgiSemimultiKK}).
Here, $V$ denotes the $H \times G$-action on $B \rtimes_l G$,
see (\ref{descentVaction}).
Note that if $l= i$, then both $B \rtimes_i G$ and $B \rtimes_i^{\rm Mod} G$ factor through
$\overline H \times \overline G$ under their actions $\alpha$ and $V$ ((\ref{actionalphahg}) and (\ref{descentVaction2})), respectively.
Consequently the tensor product $\cale \rtimes_i G$ factors through $\overline H \times \overline G$.

\begin{proposition} \label{lemmaTheta}
If $l$ indicates one of the full crossed products, i.e. $l \in \{\emptyset, s,i\}$, then
$\cale \rtimes_l G$ is a $H$-Hilbert $(A \rtimes_l G,B \rtimes_l G)$-bimodule.
\end{proposition}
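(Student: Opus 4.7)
Since $\cale \rtimes_l G$ is by construction an $H \times G$-Hilbert module over the $H \times G$-Hilbert $C^*$-algebra $B \rtimes_l G$ under the diagonal action $U \otimes V$, restricting this action to $H$ yields the $H$-Hilbert $B \rtimes_l G$-module structure. The only thing still to construct is an $H$-equivariant $*$-homomorphism $\pi_\rtimes : A \rtimes_l G \longrightarrow \call(\cale \rtimes_l G)$. The plan is to exhibit this as arising from a $G$-covariant representation $(\tilde\pi, W)$ of $A$ on $\cale \rtimes_l G$ and then apply Lemma \ref{lemmaBijectionCovRepF} together with Lemma \ref{lemmaCovRepInducedMap}. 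I set $\tilde\pi(a) = \pi(a) \otimes 1$, which is adjointable by Lemma \ref{lemmaActingOnSpaceTensor} (applied via the left $B$-action on $B \rtimes_l^{\rm Mod} G$), and for $g \in G$ I define
\[
W_g\bigl(\xi \otimes \phi_l(y)\bigr) := U_g(\xi) \otimes \phi_l(g)\phi_l(y), \quad W_g^*\bigl(\xi \otimes \phi_l(y)\bigr) := U_g^*(\xi) \otimes \phi_l(g^*)\phi_l(y),
\]
for $\xi \in \cale$, $y \in B \rtimes_{\rm alg} G$. Well-definedness of $W_g$ over the balanced tensor product reduces to the identity $\phi_l(g)\phi_l(b) = \phi_l(g(b))\phi_l(g)$ in $\phi_l(\F(G,B))$ combined with the Hilbert-module relation $U_g(\xi b) = U_g(\xi) g(b)$.

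The crucial step is showing that $W_g^*$ is indeed the adjoint of $W_g$, i.e.\ $\langle W_g(\xi \otimes \phi_l(y)), \eta \otimes \phi_l(z)\rangle = \langle \xi \otimes \phi_l(y), W_g^*(\eta \otimes \phi_l(z))\rangle$ in $B \rtimes_l G$. Unwinding both sides reduces this to the equality
\[
\phi_l(g^*)\,\phi_l\bigl(g(\langle \xi, U_g^*(\eta)\rangle_B)\bigr) = \phi_l\bigl(\langle \xi, U_g^*(\eta)\rangle_B\bigr)\,\phi_l(g^*).
\]
Using the identity $g^* \cdot g(a) = g^*(g(a)) \cdot g^*$ in $\F(G,B)$ (a special case of $gs = g(s)g$), this comes down to verifying $g^*(g(\langle \xi, U_g^*(\eta)\rangle_B)) = \langle \xi, U_g^*(\eta)\rangle_B$, which is exactly the consequence of taking the adjoint of (\ref{hilbmodrel3}) and comparing with (\ref{hilbmodrel4}). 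I next verify directly from the definitions that $W$ is a $G$-morphism, $W^*$ is a $G$-anti-morphism, the partial-isometry identities $W_g = W_g W_g^* W_g$ and $W_g^* = W_g^* W_g W_g^*$ hold, and the covariance identities (\ref{equivariantRep1})-(\ref{equivariantRep4}) hold for $(\tilde\pi, W)$ — all of these transfer from the corresponding properties of $(\pi, U)$ on $\cale$ by factorwise computation, using identity (\ref{ident2}) to move $\phi_l(g)$ past $\phi_l(a)$ in the second factor.

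By Lemma \ref{lemmaBijectionCovRepF} the pair $(\tilde\pi, W)$ determines a $*$-homomorphism $\F(G,A) \to \call(\cale \rtimes_l G)$, whose restriction to $A \rtimes_{\rm alg} G$ extends by Lemma \ref{lemmaCovRepInducedMap} to the desired $\pi_\rtimes : A \rtimes_l G \to \call(\cale \rtimes_l G)$ in the cases $l \in \{\emptyset, s, i\}$; for $l = s$ one notes that $W_g W_h = 0$ whenever $gh$ is not defined in $G$, since already $\phi_s(g)\phi_s(h) = 0$ in the second tensor factor, and for $l = i$ the representation $(\tilde\pi, W)$ factors through $\overline G$ because $U$ and $\phi_i$ do so by hypothesis. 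Finally, $H$-equivariance of $\pi_\rtimes$ (in the sense of Definition \ref{definitionGstarEquivariance}) is checked on the generators $a \in A$ and $g \in G$ of $A \rtimes_l G$: for the $a$-component one pulls back through $\pi \otimes 1$ using that the $H$-action on $B \rtimes_l G$ given by (\ref{actionalphahg}) acts trivially on the $G$-part, and for the $g$-component one uses that the $H$- and $G$-actions on $\cale$, $A$, $B$ commute (which is part of our standing assumption on $H \times G$-actions). I expect the main obstacle to be the adjointability calculation in the middle paragraph, since it is where the non-adjointable operator $U_g$ is coupled with the adjointable multiplier $\phi_l(g)$ and the mismatch is resolved only by the precise interplay of (\ref{hilbmodrel3}), (\ref{hilbmodrel4}), and the factorization identities in $\F(G,B)$.
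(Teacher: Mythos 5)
Your proof is correct and follows essentially the same route as the paper: your $W_g$ is precisely the diagonal operator $U_g \otimes V_g$ the paper uses, the left action of $A$ is $a \mapsto a \otimes 1$, and the extension to $A \rtimes_l G$ is obtained from the same universal-property argument (Lemma \ref{lemmaCovRepInducedMap}), with the same treatment of the cases $l=s$ and $l=i$ and the same commutation argument for $H$-equivariance. The only difference is presentational: you re-verify the diagonal-action and adjointability identities by hand, whereas the paper cites the general diagonal-action construction and the triviality of the $G$-action on $B \rtimes_l G$.
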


\begin{proof}
$A \rtimes_l G$ is a $H$-Hilbert $C^*$-algebra by Lemma \ref{lemmaHHilbertCstar}.
Let $U \otimes V$ be the diagonal $H \times G$-action
on $\cale \otimes_B (B \rtimes_l^{\rm Mod} G)$.
Note that $U_g \otimes V_g$ is an adjoint-able operator as the $G$-action on
$B \rtimes_l G$ is trivial (see (\ref{actionalphahg})).
Let $\phi_l = \phi_{A,G,l}$.
We define a $*$-homomorphism
$\Theta_l: A \rtimes_l G \longrightarrow \call(\cale \rtimes_l G)$
by
\begin{equation}    \label{ThetaAction}
\Theta_l( \phi_l(a_g g)) = (a_g \otimes 1) (U_g \otimes V_g),
\end{equation}
where $a_g \in A_g, g \in G^{*}$.
It is induced by the $G$-covariant representation $a \mapsto a \otimes 1$ and $g \mapsto U_g \otimes V_g$
(Lemma
\ref{lemmaCovRepInducedMap}),
because $U_g \otimes V_g$ is partial isometry in the $C^*$-algebra $\call(\cale \rtimes_l G) \subseteq B(\calh)$
($\calh$ a Hilbert space).
When $l=i$ then $\Theta_l$ is also well defined as $g \mapsto U_g \otimes V_g$ factors through $\overline G$
(see (\ref{descentVaction2})).
For the $H$-equivariance of $\Theta$ we compute
\begin{equation}
U_h \otimes V_h \,\,\Theta (\phi_l(a_g g))\,\, U_h^* \otimes V_h^* \,=\, \Theta\big(\phi_l\big(h(a_g) g \big)\big)\,\, U_h U_h^* \otimes V_h V_h^*.
\end{equation}
\end{proof}

\section{Hilbert bimodules over reduced crossed products}

The discussion in this section is only related
to the reduced crossed product, that is, when $l = r$. Recall that in this case
we only allow $B=\C$ with the trivial $G$-action.
(Nevertheless we shall write $B$ rather than $\C$ in this section.)
Consequently, the operator $U_g$ ($g \in G$) on a $B$-Hilbert module $\cale$ is
adjoint-able by (\ref{hilbmodrel3}). For the boundedness of the action
of $A \rtimes_r G$ on $\cale \rtimes_r G$ in Proposition
\ref{lemmaThetaReduced} below we will need a standard intertwining
trick for covariant representations tensored by the left regular
representation, see for instance \cite{1097.46042}, Appendix A,
Lemma A.18.(ii).

Let $\cale \otimes \ell^2(G)$ be the skew tensor product of $G$-Hilbert modules.
By Lemma \ref{lemmaTensorSkewwithInterior}
there is an isomorphism
\begin{equation}  \label{congEBell2}
\cale \otimes \ell^2(G) \cong (\cale \otimes_B B) \otimes (\C \otimes_\C \ell^2(G))
\cong \cale \otimes_B \big (B \otimes \ell^2(G) \big ).
\end{equation}
Define a partial isometry $W$ on $\cale \otimes \ell^2(G)$
by
$$W(x_t \otimes e_t) = U_t(x_t) \otimes e_{t}$$
for all $t \in G$ and $x_t \in \cale$ (Lemma \ref{characterizationlemmaPartIso}).
Let
\begin{equation}   \label{MapGamma}
\Gamma:A \rtimes_{\rm alg} G \longrightarrow \call(\cale \otimes \ell^2(G))
\end{equation}
be induced by the covariant representation
\begin{equation}   \label{reprGamma}
\Gamma(a)= (a \otimes 1), \qquad \Gamma(g) = U_g \otimes \lambda_g
\end{equation}
for all $a \in A,g \in G$.
Recall that we write
$$A \rtimes_\Gamma G = \overline{\Gamma(A \rtimes_{\rm alg} G)}.$$

\begin{lemma}  \label{lemmaWWcommutes}
$W W^*$ commutes with the $G$-action $U \otimes V$, with $A \otimes 1$ and with $A \rtimes_\Gamma G$.
\end{lemma}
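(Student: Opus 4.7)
The plan is to compute $WW^*$ explicitly and then verify each commutation pointwise on simple tensors $x \otimes e_t$. Since $W$ acts diagonally on the decomposition $\cale \otimes \ell^2(G) = \bigoplus_{t \in G} \cale \otimes e_t$ (as $U_t$ on the $t$-th block), both $W^*$ and $WW^*$ are also diagonal, with
$$W^*(y \otimes e_t) = U_t^*(y) \otimes e_t, \qquad WW^*(x \otimes e_t) = U_t U_t^*(x) \otimes e_t,$$
where $U_t^*$ is both the inverse partial isometry and the Hilbert-module adjoint (these agree here by (\ref{hilbmodrel3}) since $B=\C$). All three commutations can now be checked on this dense subspace.

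For $a \in A$, both $(a \otimes 1)WW^*(x \otimes e_t) = aU_t U_t^*(x) \otimes e_t$ and $WW^*(a \otimes 1)(x \otimes e_t) = U_t U_t^*(ax) \otimes e_t$ coincide by the $G$-equivariance relation $[U_tU_t^*,a]=0$ of (\ref{equivariantRep1}), yielding commutation with $A \otimes 1$.

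Under the identification (\ref{congEBell2}) the $G$-action $U \otimes V$ corresponds to the diagonal action $U_g \otimes \lambda_g$, so it suffices to check commutation with these generators (their adjoints $U_g^* \otimes \lambda_g^*$ then follow automatically from self-adjointness of $WW^*$). Assume $gt$ is defined, otherwise $\lambda_g e_t = 0$ and both sides vanish. Then
$$(U_g \otimes \lambda_g)WW^*(x \otimes e_t) = U_g U_t U_t^*(x) \otimes e_{gt},$$
$$WW^*(U_g \otimes \lambda_g)(x \otimes e_t) = U_{gt} U_{gt}^* U_g(x) \otimes e_{gt} = U_g U_t U_t^* U_g^* U_g(x) \otimes e_{gt},$$
where the last equality uses $U_{gt} = U_g U_t$. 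The two expressions agree because transferred left cancellation gives $U_g^* U_g U_t = U_t$ whenever $gt$ is defined; taking adjoints yields $U_t^* U_g^* U_g = U_t^*$, which collapses the middle of $U_g U_t U_t^* U_g^* U_g$ down to $U_g U_t U_t^*$.

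Finally, by (\ref{reprGamma}) the $*$-algebra $\Gamma(A \rtimes_{\rm alg} G)$ is generated (as a $*$-algebra) by the elements $a \otimes 1$ and $U_g \otimes \lambda_g$, each of which commutes with $WW^*$ by the preceding paragraphs; since the commutant of the bounded self-adjoint operator $WW^*$ is norm-closed in $\call(\cale \otimes \ell^2(G))$, commutation extends to $A \rtimes_\Gamma G = \overline{\Gamma(A \rtimes_{\rm alg} G)}$. The only substantive step is the invocation of transferred left cancellation in the second commutation — exactly the hypothesis imposed on $G$-actions throughout the reduced-crossed-product discussion, and without which the identity $U_g U_t U_t^* U_g^* U_g = U_g U_t U_t^*$ would in general fail.
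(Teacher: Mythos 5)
Your proof is correct and follows essentially the same route as the paper: one checks commutation of the diagonal projection $WW^*$ with the generators $a\otimes 1$ and $U_g\otimes\lambda_g$ on the vectors $x\otimes e_t$, the key identity being $U_{gt}U_{gt}^*U_g=U_gU_tU_t^*$, which is exactly the paper's parenthetical computation via transferred left cancellation (Lemma \ref{lemmaTransferredILM}). Your explicit handling of the adjoint step and the passage to the norm closure $A\rtimes_\Gamma G$ merely fills in the paper's ``one checks''.
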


\begin{proof}
One checks that the projection $W W^*$ commutes with the adjoint-able partial isometry $U_g \otimes \lambda_g$ (and so with $U_g^* \otimes \lambda_g^*$) and $a \otimes 1$ for all $g \in G$ and $a \in A$.
(One uses $U_{\rho_g(t)} U_{\rho_g(t)}^* U_g = U_{g t t^* g^* g} = U_{g t (g^* g t)^*}= U_{g t t^*}$ by transferred left cancellation and
Lemma \ref{lemmaTransferredILM}.) 
\end{proof}

\begin{definition}  \label{definitionGnondegenerate}
{\rm
$G$ is called {\em non-degenerate} if for all Hilbert $(A,B)$-bimodules
and all $x \in A \rtimes_\Gamma G$,
$x W W^* = 0$ implies $x=0$.
}
\end{definition}

If $G$ is a groupoid then
$W W^*$ is an identity for $A \rtimes_\Gamma G$ and so $G$ is non-degenerate.
Indeed, every $y \in \Gamma(A \rtimes G)$ can be written as a product of elements of the form
$x=(a_g \otimes 1) (U_g \otimes \lambda_g) \in A \rtimes_\Gamma G$ for $g \in G'$.
Let $\eta:=\xi_t \otimes e_t \in \cale \otimes \ell^2(G)$. Then
$$x W W^* \eta = a_g U_g U_t U_t^* \xi_t \otimes \lambda_g e_t = a_g U_g \xi_t \otimes \lambda_g e_t = x \eta$$
by Lemma \ref{corollaryHilbertactionInverses}.

Our motivating examples for reduced crossed products were semimultiplicative sets like directed graphs.
A prototype-example is $G=\N_0$.
By showing in the next lemma that $\N_0$ is non-degenerate we would like to demonstrate that non-degenerateness
may not be a too restrictive condition.

\begin{lemma}
$\N_0$ is non-degenerate.
\end{lemma}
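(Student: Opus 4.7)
The plan is to leverage that $\N_0$ is unital, with each $U_n$ forced to be an isometry: the transferred left cancellation $U_1^*U_1U_0 = U_0$ together with $U_0 = 1$ yields $U_1^*U_1 = 1$, so $U_n = U_1^n$ is an isometry, and $W$ is an isometry on $\cale\otimes\ell^2(\N_0)$ with $W^*W = 1$ and $WW^*$ the projection onto $\bigoplus_t U_t(\cale)\otimes e_t$.

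First I would verify that $WW^*$ commutes with $A\rtimes_\Gamma G$ by checking on the generators $\Gamma(a), \Gamma(n), \Gamma(n^*)$: this uses the covariance $[U_tU_t^*,a]=0$, the multiplicativity $U_nU_t = U_{n+t}$, and taking adjoints. Consequently the compression $\psi(x) := W^*xW$ defines a $*$-homomorphism $\psi : A\rtimes_\Gamma G \to \call(\cale\otimes\ell^2(\N_0))$ whose kernel is $\{x : xWW^* = 0\}$; the nontrivial inclusion uses that $W^*W=1$ and that $\range(W)$ is $A\rtimes_\Gamma G$-invariant, so $\psi(x)=0$ forces $xW \in \range(W)\cap\ker(W^*)=\{0\}$. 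A direct computation on generators gives the intertwining identity $W\phi_r(y)W^* = \Gamma(y)WW^*$ for all $y \in A\rtimes_{\rm alg}G$, where $\phi_r$ is the left regular representation induced by $(\pi,U)$ (Definition \ref{definitionLeftRegular}); hence $\psi\circ\Gamma = \phi_r$ and $\|\Gamma(y)WW^*\| = \|\phi_r(y)\|$.

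The main step---and the main obstacle---is to show $\psi$ is injective. I would exploit a Wold-type decomposition relative to the isometry $U_1$: setting $N := (1-U_1U_1^*)\cale$, the complement $(1-WW^*)(\cale\otimes\ell^2(\N_0))$ breaks up along diagonals $d\geq 1$ into $\Gamma$-invariant blocks $\bigoplus_s U_s(N)\otimes e_{s+d}$, each of which is unitarily equivalent, via $y\otimes e_s\mapsto U_sy\otimes e_{s+d}$, to the left regular representation on $N\otimes\ell^2(\N_0)$ with $A$-action $\pi|_N$. Hence $\Gamma$ decomposes as $\phi_r^\cale \oplus \bigoplus_{d\geq 1}\phi_r^N$, and since the $A$-action on $N$ is a restriction of the action on $\cale$, the $\cale$-summand dominates in norm, giving $\|\Gamma(y)\| = \|\phi_r(y)\|$. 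Combined with the kernel identity $\ker\Gamma = \ker\phi_r$ on $A\rtimes_{\rm alg}G$ (verifiable directly on finite sums $y = \sum_{n,m} c_{nm}\cdot nm^*$ by inductively evaluating $\Gamma(y)WW^*$ on $\cale\otimes e_k$ for $k=0,1,\ldots$, using $U_m^*U_k=U_{k-m}$ and $[U_kU_k^*,a]=0$ to extract $\pi(c_{nm})U_nU_m^* = 0$ for each pair $n,m$), this shows $\psi : A\rtimes_\Gamma G \to A\rtimes_r G$ is an isomorphism, and in particular injective. Hence $xWW^* = 0$ forces $\psi(x)=0$, which in turn gives $x=0$.
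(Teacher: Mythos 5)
Your argument is correct, but it follows a genuinely different route from the paper's. The paper proves non-degeneracy of $\N_0$ by a direct, hands-on induction: it reduces every word in $\N_0^*$ to the normal form $S_nS_k^*$ (using that $S_0=S_1^*S_1$ is a self-adjoint unit for the words), approximates $T\in A\rtimes_\Gamma G$ in norm by $\Gamma(T^i)$ with $T^i=\sum_{n,k}a^i_{n,k}\,nk^*$, and then evaluates $\Gamma(T^i)$ and $\Gamma(T^i)WW^*$ on $x_j\otimes e_j$ for $j=0,1,2,\dots$, peeling off one diagonal of coefficients at each step to conclude $T=0$. You instead organize the same diagonal structure once and for all via a Wold decomposition of the isometry $U_1$: the complement of $WW^*$ splits into $\Gamma$-invariant diagonal blocks $\bigoplus_s U_s(N)\otimes e_{s+d}$, $N=(1-U_1U_1^*)\cale$, each unitarily equivalent to the left regular representation built from $\pi|_N$, which is a subrepresentation of the one built from $\pi$ on $\cale$ (here one uses that $N\otimes\ell^2(\N_0)$ is reducing for $\phi^\cale_r$, since $\pi(A)$ commutes with $U_1U_1^*$). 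This buys you more than the paper proves: not merely injectivity of $x\mapsto xWW^*$ but the exact norm identity $\|\Gamma(y)\|=\|\Gamma(y)WW^*\|$ on the dense subalgebra, i.e.\ the compression $\psi$ is isometric; the paper's induction is more elementary and avoids the decomposition, but yields only injectivity. Two small caveats. First, you assume $U_0=1$; the paper's own proof only derives that $U_0=U_1^*U_1$ is a self-adjoint unit projection for the words $U_g$, $g\in\N_0^*$. Either invoke the unital convention of Remark \ref{remarkUnitalKK}, or note that every $U_g$ ($g\in\N_0^*$ nonempty) and $WW^*$ annihilate $(1-U_0)\cale\otimes\ell^2(\N_0)$, so one may restrict to $U_0\cale$ where $U_0=1$; after that your Wold argument goes through verbatim. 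Second, your closing claim that $\psi$ is an isomorphism onto $A\rtimes_r G$ overreaches slightly when $\pi$ is not faithful on $\cale$ (the image is then a priori only the left regular algebra built from $\pi$); but this is harmless, since the non-degeneracy statement needs only the injectivity of $\psi$, equivalently the isometry of $x\mapsto xWW^*$, which your decomposition does establish. The final kernel computation on finite sums is then redundant.
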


\begin{proof}
Let $S$ denote the $\N_0$-action on a Hilbert module $\cale$ with transferred left cancellation.
We claim that every word $S_g$ for $g \in \N_0^*$ allows a representation as $S_g = S_n S_k^* = S_1^n (S_1^k)^*$ for $n,k \in \N_0$.
Indeed, $S_0$ is a unit for every word, as in particular $S_0$ is self-adjoint by Lemma \ref{IdemLemma}.
Also, $S_0 = S_1^* S_1 S_0 = S_1^* S_1$ by transferred left cancellation.
The claim then follows by induction on the length of a word.

Let $X \subseteq A \rtimes_{\rm alg} G \subseteq \F(G,A)$ denote the set of elements of the form $a = \sum_{n,k \in \N_0} a_{n,k} n k^*$
for $a_{n,k} \in A$ (recall identity (\ref{ginfa}) which holds in $\F(G,A)$).
By the above claim, $\Gamma(X) = \Gamma(A \rtimes_{\rm alg} G)$.
Write $p=W W^*$.
To check Definition \ref{definitionGnondegenerate}, assume that $T \in A \rtimes_\Gamma G$ satisfies $T p = 0$.
Then there is a sequence $T^i = \sum_{n,k \in \N_0} a_{n,k}^i n k^*$ in $X$ such that $\Gamma(T^i)$ converges in norm to $T$.

In $\cale \otimes \ell^2(\N_0)$ and by (\ref{reprGamma}) we have
\begin{eqnarray}  
&& \Gamma(T^i)(x_0 \otimes e_0)= \sum_{n,k \in \N_0} a_{n,k}^i S_{n k^*} (x_0) \otimes  \lambda_{n k^*}(e_0) \nonumber \\
&=& \sum_{n \in \N_0} a_{n,0}^i S_n x_0 \otimes e_n = \Gamma(T^i) p (x_0 \otimes e_0) \longrightarrow T p (x_0 \otimes e_0)=0
\label{someequ1}
\end{eqnarray}
when $i \longrightarrow \infty$, since $T p =0$, for all $x_0 \in \cale$.
Similarly we have
\begin{eqnarray}
\Gamma(T^i)(x_1 \otimes e_1) & =& \sum_{n \in \N_0} a_{n,0}^i S_n  x_1 \otimes e_{n+1}
+ \sum_{n \in \N_0} a_{n,1}^i S_n (S_1^*  x_1) \otimes e_n,   \label{someeq7} \\
\Gamma(T^i)p(x_1 \otimes e_1) &=&
(1 \otimes \lambda)\sum_{n \in \N_0} a_{n,0}^i S_n (S_1 S_1^* x_1) \otimes e_{n} \label{someeq8}\\
&&+ \sum_{n \in \N_0} a_{n,1}^i S_n (S_1^*  x_1) \otimes e_n
 \longrightarrow 0    \label{someeq9}
\end{eqnarray}
The convergence is here because of $T p =0$.
Entering convergence (\ref{someequ1}) in convergence (\ref{someeq8})-(\ref{someeq9}) shows that
(\ref{someeq7}) converges to zero (using convergence (\ref{someequ1}) again).
One can proceed in this way further by considering $\Gamma(T_i)(x_2 \otimes e_2)$ and showing that it converges to zero, and so on.
In this way we get $T(x)= \lim_{i \rightarrow \infty}\Gamma(T_i)(x) = 0$ for all $x \in \cale \odot \ell^2(\N_0)$.
Hence $T=0$.
\end{proof}

We now come to the result this section is all about.

\begin{proposition} \label{lemmaThetaReduced}
$\cale \rtimes_r G$ is a $H$-Hilbert $(A \rtimes_r G,B \rtimes_r G)$-bimodule.
\end{proposition}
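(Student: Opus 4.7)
The plan is to model the proof on Proposition \ref{lemmaTheta}, keeping the same algebraic definition of the left action but adding the boundedness argument that the reduced norm demands. So first I would define, on the algebraic level,
$$\Theta_r\big(\phi_r(a_g g)\big) \,=\, (a_g\otimes 1)(U_g\otimes V_g)$$
for $a_g\in A_g$, $g\in G^{*}$, acting on $\cale\rtimes_r G = \cale\otimes_B(B\rtimes_r^{\rm Mod}G)$. The pair $(a\mapsto a\otimes 1,\; g\mapsto U_g\otimes V_g)$ is a $G$-covariant representation (by the same routine check as in Proposition \ref{lemmaTheta}), and since $U_g\otimes V_g$ is already a partial isometry in $\call(\cale\rtimes_r G)$, this gives a well-defined representation $\Theta_r$ of $A\rtimes_{\rm alg}G$, exactly as in (\ref{ThetaAction}).

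The new content is to extend $\Theta_r$ continuously to $A\rtimes_r G$, i.e. to show $\|\Theta_r(x)\|\le\|x\|_{A\rtimes_r G}$ for $x\in A\rtimes_{\rm alg}G$. For this I would apply the standard intertwining trick signaled at the start of the section. Since the embedding $B\rtimes_r G\hookrightarrow B(B\otimes\ell^2(G))$ is by definition injective, Lemma \ref{lemmaActingOnSpaceTensor} yields an injective $*$-homomorphism
$$\mu:\call(\cale\rtimes_r G)\longrightarrow \call\big((\cale\rtimes_r G)\otimes_{B\rtimes_r G}(B\otimes\ell^2(G))\big),$$
and by Lemma \ref{lemmaTensorSkewwithInterior} the codomain is $\call(\cale\otimes\ell^2(G))$ (via (\ref{congEBell2})). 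A direct check on generators $a_g g$—using that $V_g$ acts on $B\rtimes_r^{\rm Mod}G$ by left multiplication by $\phi_r(g)$, cf.~(\ref{descentVaction2}), and that $\phi_r(g)=\lambda_g$ on $\ell^2(G)$—shows that $\mu\,\Theta_r(x) = \Gamma(x)$, the representation of (\ref{reprGamma}). Hence $\|\Theta_r(x)\|=\|\Gamma(x)\|$.

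It then remains to bound $\|\Gamma(x)\|$ by $\|x\|_{A\rtimes_r G}$. The strategy is to compare $\Gamma$ with the faithful left regular representation $\zeta$ of Lemma \ref{lemmaLeftRegActionHilbertModule}, again via a tensor product argument together with the partial isometry $W$ of Lemma \ref{lemmaWWcommutes}. Roughly, $W$ intertwines the $G$-part of $\Gamma$ with the untwisted action $1\otimes\lambda$ (this is where transferred left cancellation is used, via the identity $U_{gh}=U_gU_h$ when $gh$ is defined), while the commutation of $WW^*$ with $\Gamma(A\rtimes_{\rm alg}G)$ lets one cut down to an invariant submodule on which the comparison with $\zeta\otimes 1$ is clean. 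Combining the two norm equalities gives $\|\Theta_r(x)\|\le\|x\|_{A\rtimes_r G}$, so $\Theta_r$ extends continuously to $A\rtimes_r G$.

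Finally, $H$-equivariance of $\Theta_r$ is checked by exactly the computation at the end of the proof of Proposition \ref{lemmaTheta}, using that the $H$- and $G$-actions commute on $A$, $B$, $\cale$ and hence on all of the tensor constructions. The main obstacle is the intertwining in the third paragraph: the representation $\Gamma$ uses the straight action $a\otimes 1$, whereas $\zeta$ involves the twisted action $h^{*}(a)$ on $A\otimes\ell^2(G)$, so one must arrange the interplay between $W$, the transferred cancellation condition, and the projections $U_gU_g^{*}$ so that the comparison actually passes to a norm estimate and not merely to an inequality modulo compacts.
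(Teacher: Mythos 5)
Your outline matches the paper's proof in structure: the same algebraic definition of $\Theta_r$, the same chain of injective maps $\mu,\mu_1,\mu_2$ reducing $\|\Theta_r(x)\|$ to $\|\Gamma(x)\|$, and the same use of $W$ and transferred left cancellation to relate $\Gamma$ to the left regular representation $\phi_r$. The concrete identity you are gesturing at in the third paragraph is
$\Gamma(a_g g)\,WW^* = W\,\phi_r(a_g g)\,W^*$, which indeed resolves the twist mismatch you flag as the main obstacle, and from which $\|\Gamma(x)WW^*\| = \|W\phi_r(x)W^*\| \le \|\phi_r(x)\|$ follows since $W$ is a partial isometry.

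However, there is a genuine gap: you never invoke the non-degenerateness of $G$ (Definition \ref{definitionGnondegenerate}), and without it the argument does not close. Cutting down by the projection $WW^*$, which commutes with $A\rtimes_\Gamma G$ by Lemma \ref{lemmaWWcommutes}, gives a $*$-homomorphism $x\mapsto xWW^*$ on $A\rtimes_\Gamma G$; this is automatically norm-decreasing, so you only obtain $\|\Gamma(x)WW^*\|\le\|\Gamma(x)\|$, which points the wrong way. What you need is $\|\Gamma(x)\| = \|\Gamma(x)WW^*\|$, and that holds precisely because the non-degeneracy hypothesis makes $x\mapsto xWW^*$ injective, hence isometric. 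This is exactly why the hypothesis appears in Theorem \ref{theoremMain} for the case $l=r$, and why the paper spends effort verifying it for groupoids and for $\N_0$. Your phrase ``lets one cut down to an invariant submodule on which the comparison is clean'' silently assumes the compression loses no norm; that assumption is false for a general $G$ with left cancellation and must be supplied as a hypothesis. With that hypothesis added and used at this step, the rest of your plan goes through as written.
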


\begin{proof}
We want to define the action $\Theta_r$ of $A \rtimes_r G$ on $\cale \rtimes_r G$ as in (\ref{ThetaAction}).
Thus we aim to define $\Theta_r$ on $\phi_r(A \rtimes_{\rm alg} G)$ by $\Theta_r \phi_r = \varphi$,
where
$\varphi: A \rtimes_{\rm alg} G \longrightarrow \call(\cale \rtimes_r G)$ is determined by
$$\varphi( a_g g) = (a_g \otimes 1) (U_g \otimes V_g) .$$
We have a commutative diagram
$$\begin{xy}
\xymatrix{
A \rtimes_{\rm alg} G \ar[r]^\varphi  \ar[rd]_\Gamma  &  \call \big (\cale \otimes_B (B \rtimes_r G) ) \ar[rr]^\mu  \ar[d]^f & &
\call \big ( \cale \otimes_B (B \rtimes_r G) \otimes_{B \rtimes_r G} (B \otimes \ell^2(G) \big)
\ar[d]^{\mu_1} \\
& \call \big (\cale \otimes \ell^2(G) \big ) 
& &  \call \big (\cale \otimes_B  (B \otimes \ell^2(G) \big )  \ar[ll]^{\mu_2}
}
\end{xy}$$
Here, $B \rtimes_r G$ acts on $B \otimes \ell^2(G)$ by $\zeta$ of Lemma \ref{lemmaLeftRegActionHilbertModule},
$\mu$ is the injective map of Lemma \ref{lemmaActingOnSpaceTensor}, $\mu_1$ the isomorphism induced by the isomorphism of Lemma \ref{lemmakuerzetensor}, 
and $\mu_2$ the isomorphism induced by the isomorphism (\ref{congEBell2}).
It is important here that $G$ acts trivially on $B$. Hence, in the right bottom corner of the above diagram, $B$ acts on $B \otimes \ell^2(G)$
by left multipliciation (so acts only on $B$).
Let $f:= \mu_2 \mu_1 \mu$, which is injective.
A tedious computation (similar to that of Lemma \ref{lemmaLeftRegActionHilbertModule}) yields
$$f \big( \varphi(a_g g) \big) (x_t \otimes e_t) = a_g U_g x_t \otimes \lambda_g e_{t} = \Gamma(a_g g)$$
for $g \in G^*,t \in G,x_t \in \cale$ and $a_g \in A_g$.
Hence $f \varphi = \Gamma$ on $A \rtimes_{\rm alg} G$.

In order that $\Theta_r$ is evidently a well defined continuous map we need to show that
$$\|\Theta_r ( \phi_r(x)) \| = \|\varphi(x) \| = \|f (\varphi(x)) \| = \|\Gamma(x)\|
\le \|\phi_r(x)\|_{A \rtimes_r G}$$
for all $x \in A \rtimes_{\rm alg} G$. Only the last inequality needs a discussion; the other identites are clear.

Since $G$ is non-degenerate (Definition \ref{definitionGnondegenerate}), the homomorphism
$$\nu : A \rtimes_\Gamma G \longrightarrow ( A \rtimes_\Gamma G) W W^*$$
given by
$\nu(x) = x W W^*$ (see Lemma \ref{lemmaWWcommutes}) is an
isometry.
Thus $\|W W^* \Gamma(x)\| = \|\Gamma(x)\|$ for all $x \in A \rtimes_{\rm alg} G$.

By Lemma \ref{lemmaTransferredILM} and the fact that $U$ has transferred left cancellation, we thus have
\begin{eqnarray*}
 && \Gamma(a_g g) W W^* (\xi_t \otimes e_t) \,= \,a_g U_g U_t U_t^* \xi_t \otimes \lambda_g(e_{t}) \,=\, a_g U_{\rho_g(t)} U_t^* \xi_t \otimes e_{\rho_g(t)}\\
&=&  U_{\rho_g(t)} U_{\rho_g(t)}^* a_g U_{\rho_g(t)} U_t^* \xi_t \otimes e_{\rho_g(t)}
\,=\, U_{\rho_g(t)} \big ((\rho_g(t))^*(a_g) \big ) U_t^* \xi_t \otimes  e_{\rho_g(t)}\\
&=& \big (W \phi_r(a_g g)  W^* \big ) (\xi_t \otimes e_t)
\end{eqnarray*}
for $t \in G, g \in G^*, a_g \in A_g$ and $\xi_t \in \cale$, and when $\rho_g(t)$ is defined.
(Note that $\cale$ is actually a Hilbert space.)
This thus shows
$$\|\Gamma(x)\| = \|\Gamma(x) W W^*\| = \|W \phi_r(x) W^*\| \le \|\phi_r(x)\|.$$
\end{proof}

\section{The descent homomorphism}     \label{sectionDescent3}



Let $B_1$ and $B_2$ be $H \times G$-Hilbert modules.
Let $(\cale_1,T_1) \in \E^G(A,B_1)$ and $(\cale_2,T_2) \in \E^G(B_1,B_2)$.
Write $\cale_{12} = \cale_1 \otimes_{B_1} \cale_2$.

%
%

\begin{lemma} \label{lemmaIsomorphE12}
There is an $H$-Hilbert module isomorphism
\begin{eqnarray*}
\cale_{12} \rtimes_l G    &\cong&  (\cale_1 \rtimes_l G) \otimes_{B_1 \rtimes_l G} (\cale_2 \rtimes_l G).
\end{eqnarray*}
\end{lemma}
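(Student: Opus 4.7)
The strategy is to unfold both sides using the definition $\cale \rtimes_l G = \cale \otimes_B (B \rtimes_l^{\rm Mod} G)$, reduce the problem to a single nontrivial identification via Lemma \ref{lemmakuerzetensor}, and then check $H$-equivariance.

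First I would rewrite the left-hand side by associativity of internal tensor products:
\begin{eqnarray*}
\cale_{12} \rtimes_l G &=& (\cale_1 \otimes_{B_1} \cale_2) \otimes_{B_2} (B_2 \rtimes_l^{\rm Mod} G) \\
&\cong& \cale_1 \otimes_{B_1} \big(\cale_2 \otimes_{B_2} (B_2 \rtimes_l^{\rm Mod} G)\big) \,\,=\,\, \cale_1 \otimes_{B_1} (\cale_2 \rtimes_l G),
\end{eqnarray*}
and, likewise, the right-hand side splits as
$$
(\cale_1 \rtimes_l G) \otimes_{B_1 \rtimes_l G} (\cale_2 \rtimes_l G)
\,\cong\, \cale_1 \otimes_{B_1} \big((B_1 \rtimes_l^{\rm Mod} G) \otimes_{B_1 \rtimes_l G} (\cale_2 \rtimes_l G)\big),
$$
where on the right the $B_1 \rtimes_l G$-action on $\cale_2 \rtimes_l G$ is the one furnished by Proposition \ref{lemmaTheta} (resp.\ \ref{lemmaThetaReduced} if $l=r$) applied to the $(B_1, B_2)$-bimodule $\cale_2$. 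So everything reduces to producing a canonical isomorphism
\begin{equation}  \label{planKeyIso}
(B_1 \rtimes_l^{\rm Mod} G) \otimes_{B_1 \rtimes_l G} (\cale_2 \rtimes_l G) \,\cong\, \cale_2 \rtimes_l G
\end{equation}
of $H$-Hilbert $(B_1, B_2 \rtimes_l G)$-bimodules.

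The second step is to obtain \eqref{planKeyIso} from Lemma \ref{lemmakuerzetensor}: as a right Hilbert module, $B_1 \rtimes_l^{\rm Mod} G$ coincides with $B_1 \rtimes_l G$ acting on itself, so I would apply that lemma to the representation $\Theta_l:B_1 \rtimes_l G \longrightarrow \call(\cale_2 \rtimes_l G)$, provided this representation is non-degenerate. This is precisely the point at which the standing assumption that $G$ has a unit $1_G$ acting identically enters: for $b\in B_1$ the element $\phi_l(b\cdot 1_G)\in B_1\rtimes_l G$ acts on $\cale_2\rtimes_l G$ (through $\Theta_l$) essentially as left multiplication by $b$ (tensored with the identity on $B_2\rtimes_l^{\rm Mod} G$), so non-degeneracy of the $B_1$-action on $\cale_2$ transfers to non-degeneracy of the $B_1\rtimes_l G$-action on $\cale_2\rtimes_l G$. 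The isomorphism \eqref{planKeyIso} then reads $x \otimes \xi \mapsto \Theta_l(x)\xi$ on elementary tensors.

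Finally I would chase through the $H$-action. The $H$-action on $\cale \rtimes_l G$ is the tensor product of the given $H$-actions on $\cale$ and on $B\rtimes_l^{\rm Mod} G$ (the latter given by \eqref{descentVaction} restricted to $H$), and by Lemma \ref{lemmaHHilbertCstar} and construction all three associativity/identification steps above are $H$-equivariant; in particular the isomorphism \eqref{planKeyIso} intertwines the $H$-actions because $\Theta_l$ is $H$-equivariant (as shown in the proof of Proposition \ref{lemmaTheta}). Composing the three isomorphisms yields the desired $H$-Hilbert module isomorphism. The main obstacle I anticipate is genuinely the non-degeneracy hypothesis needed to invoke Lemma \ref{lemmakuerzetensor}; the algebraic manipulations on both sides are routine associativity of internal tensor products once one is willing to assume, as the paper does, that $G$ is unital with $1_G$ acting identically.
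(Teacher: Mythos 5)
Your proposal is correct and takes essentially the same route as the paper: the paper likewise reduces the statement to the $H$-equivariant homomorphism $\varphi(b)=b\,1_G$ from $B_1$ into $B_1\rtimes_l G$ and the resulting change-of-rings isomorphism $\cale_1 \otimes_{B_1} (B_1\rtimes_l G) \otimes_{B_1\rtimes_l G} \big(\cale_2\otimes_{B_2}(B_2\rtimes_l G)\big) \cong \cale_1\otimes_{B_1}\cale_2\otimes_{B_2}(B_2\rtimes_l G)$, quoting a lemma of \cite{burgiSemimultiKK} where you use associativity of interior tensor products plus Lemma \ref{lemmakuerzetensor}. The one stylistic difference is that you cancel $(B_1\rtimes_l^{\rm Mod}G)\otimes_{B_1\rtimes_l G}(-)$ \emph{before} tensoring with $\cale_1$, which forces the non-degeneracy of $\Theta_l$ on $\cale_2\rtimes_l G$ into your intermediate step \eqref{planKeyIso}; doing the cancellation \emph{after} tensoring with $\cale_1$ needs only the density of $\cale_1 B_1$ in $\cale_1$ and so is slightly more robust, though under the paper's standing assumption that $1_G$ acts identically your version goes through as well.
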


\begin{proof}
In the category of $H$-Hilbert modules $B_2 \rtimes_l G$ and $B_2 \rtimes_l^{\rm Mod} G$ are identic, as they differ only in their $G$-action (see Lemma \ref{lemmaHHilbertCstar}).
The map $\varphi: B_1 \longrightarrow B_1 \rtimes_l G$ given by $\varphi(b)= b 1_{G}$
is a $H$-equivariant homomorphism of $H$-Hilbert $C^*$-algebras (Definition \ref{defGequiHilbertCstar}).
By
\cite[Lemma 14]{burgiSemimultiKK}
there is an isomorphism of $H$-Hilbert mdoules
\begin{eqnarray*}
\cale_1 \otimes_{B_1} (B_1 \rtimes_l G) \otimes_{B_1 \rtimes_l G}
\big ( \cale_2 \otimes_{B_2} (B_2 \rtimes_l G) \big)
&\cong& \cale_1 \otimes_{B_1} \cale_2 \otimes_{B_2}  (B_2 \rtimes_l G).
\end{eqnarray*}
\end{proof}

\begin{lemma} \label{lemmaKasparovRemark210}
If $(\cale_{12},T_{12})$ is a Kasparov product then
$R= [T_1 \otimes 1, T_{12}]$ belongs to $Q_{A}(\cale_{12})$, further $R \ge 0$ modulo $I_{A} (\cale_{12})$,
and the elements
\begin{eqnarray*}
g(R) - g(1) R  &=& U_gR U_g^* - U_g U_g^* R,  \label{GandR1}\\
g(1) R - R g(1) &=& U_g U_g^* R  - R U_g U_g^* \label{GandR2}
\end{eqnarray*}
are in $I_{A}(\cale_{12})$ for all $g \in G'$.
\end{lemma}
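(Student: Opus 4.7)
The plan is to treat the three assertions separately. The first two, namely $R \in Q_A(\cale_{12})$ and $R \ge 0$ modulo $I_A(\cale_{12})$, are purely non-equivariant statements about the $A$-action on $\cale_{12}$ and the operator $R=[T_1\otimes 1, T_{12}]$; they are exactly the two conditions imposed on $T_{12}$ by the definition of the Kasparov product, hence they follow immediately from the hypothesis that $(\cale_{12},T_{12})$ is a Kasparov product (this is the content of Kasparov's Remark~2.10 in \cite{kasparov1988}, which carries over verbatim as it makes no reference to the $G$-action).

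For the equivariant third assertion, abbreviate $T_1':=T_1\otimes 1$, $U:=U_g\otimes U_g$ (the diagonal action on $\cale_{12}$) and $p:=UU^*=g(1)$ acting on $\cale_{12}$. The first step is to assemble the relevant ingredients. The cycle conditions (\ref{defCycleCondition}) for $(\cale_{12},T_{12})$ in $\E^G(A,B_2)$ directly yield
$$[p,T_{12}],\quad [U^*U,T_{12}],\quad UT_{12}U^*-pT_{12}\ \in\ I_A(\cale_{12}).$$
The corresponding relations for $T_1'$ come from the tensor calculation
$$UT_1'U^*-pT_1' = (U_gT_1U_g^* - U_gU_g^*T_1)\otimes U_gU_g^*,\qquad [p,T_1']=[U_gU_g^*,T_1]\otimes U_gU_g^*,$$
whose first factors lie in $I_A(\cale_1)$ by the cycle conditions for $T_1$.

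With these ingredients, the target identities are purely algebraic. Using that $U$ is a partial isometry, so $(1-U^*U)U^*=0$, one inserts $1=U^*U+(1-U^*U)$ between the two factors inside $UT_1'T_{12}U^*$ and $UT_{12}T_1'U^*$; the commutators $[U^*U,T_{12}]$ and $[U^*U,T_1']$ absorb into $I_A$, yielding
$$URU^*\ \equiv\ [UT_1'U^*,\,UT_{12}U^*]\pmod{I_A}.$$
Replacing $UT_1'U^*$ by $pT_1'$ and $UT_{12}U^*$ by $pT_{12}$ (each differing from the target by elements of $I_A$), using $p^2=p$ and that $[p,T_1'],[p,T_{12}]\in I_A$, the right-hand side simplifies to $p[T_1',T_{12}]=pR$. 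The second identity $[p,R]\in I_A$ is then immediate from the graded Leibniz rule $[p,[T_1',T_{12}]]=[[p,T_1'],T_{12}]\pm[T_1',[p,T_{12}]]$. The case of $g^*\in G'$ is treated symmetrically, using the analogous cycle conditions involving $g^*(1)=U^*U$.

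The main obstacle will be the preliminary tensor-product transfer: to pass from the condition $U_gT_1U_g^*-U_gU_g^*T_1\in I_A(\cale_1)$ (and similarly for the commutator with $U_gU_g^*$) to membership of the tensored expression in $I_A(\cale_{12})$. Since the $A$-action on $\cale_{12}$ is via $\pi_A\otimes 1$, this amounts to showing that $k\otimes U_gU_g^*\in\calk(\cale_{12})$ whenever $k\in\calk(\cale_1)$ has the form $k=\pi_A(a)k'$ or $k=k'\pi_A(a)$ arising from the cycle condition. This compactness on the internal tensor product is the delicate step and will be stated as a separate preliminary, since it is used both for the manipulation of $[p,T_1']$ and for the rewriting of $UT_1'U^*$.
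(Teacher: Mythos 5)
Your treatment of the first two assertions matches the paper's (both are delegated to the remark below Definition~2.10 of \cite{kasparov1988}, applied non-equivariantly), and your target identities $U_gRU_g^*\equiv U_gU_g^*R$ and $[U_gU_g^*,R]\equiv 0$ modulo $I_A(\cale_{12})$ are the right ones. But the route you propose for the third assertion has a genuine gap, and it is exactly at the step you flag as ``delicate'' and defer to a preliminary: you would need $k\otimes U_gU_g^*\in\calk(\cale_{12})$ for the compact errors $k\in\calk(\cale_1)$ coming from the cycle conditions for $T_1$. This is false in general: $\calk(\cale_1)\otimes 1$ is \emph{not} contained in $\calk(\cale_1\otimes_{B_1}\cale_2)$ unless $B_1$ acts on $\cale_2$ by compact operators (already for $\cale_1=B_1=\C$ and $\cale_2$ an infinite-dimensional Hilbert space, $\C\otimes 1$ gives the identity operator). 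So the relation $U(T_1\otimes 1)U^*-p(T_1\otimes 1)\in I_A(\cale_{12})$ cannot be established as a standalone fact, and your plan of first proving it and then substituting it into $[UT_1'U^*,UT_{12}U^*]$ breaks down.

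What makes the statement true is that these $\calk(\cale_1)\otimes 1$ error terms never need to be compact by themselves: in the correct bookkeeping they only ever occur inside a commutator with $T_{12}$, and $[k\otimes 1,T_{12}]\in\calk(\cale_{12})$ because $T_{12}$ is a Kasparov product, hence a $T_2$-connection (this is \cite[Lemma 10.(1)]{burgiSemimultiKK} in the paper's proof). Concretely, the paper computes $a\,g(T_{12}T_1')$ and $a\,g(T_1'T_{12})$ separately, each producing an error $k\otimes g(1)$ with $k=a g(T_1)-T_1g(1)a\in\calk(\cale_1)$ sitting on opposite sides of $T_{12}$, and only the \emph{difference} $[k\otimes g(1),T_{12}]$ is compact. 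Your decomposition into ``relations for $T_1'$'' and ``relations for $T_{12}$'' discards precisely this cancellation; the connection property of $T_{12}$, which is the essential ingredient, is never invoked in your outline. To repair the argument you must either restructure it as in the paper (multiply by $a\in A$, expand both orderings of the product, and collect the $\calk(\cale_1)\otimes 1$ terms into a commutator with $T_{12}$), or at minimum replace your preliminary by the correct statement that $[k\otimes 1,T_{12}]$ and $a(k\otimes 1)$-type expressions paired with $T_{12}$ are compact.
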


\begin{proof}
The first two assertions follows from Remark below Definition 2.10 in \cite{kasparov1988},
applied to the trivial group $G= \{e\}$.
Let $a \in A, a'= g^*(a)$ and $T_1' = T_1 \otimes 1$.
For simplicity we compute only the case when $\partial a = 0$.
Modulo $\calk(\cale_{12})$ we have
\begin{eqnarray*}
&&a g(T_{12} T_1') = a g( g^*(1) T_{12} T_1') = g( a' g^*(1) T_{12} T_1') \equiv g (a' T_{12} g^*(1) T_1')\\
&=& a g(T_{12}) g(T_1') \equiv a T_{12} g(1) g(T_1') \equiv T_{12} a g(T_1')\\
&=& T_{12} (k \otimes g(1)) + T_{12} T_1' a g(1),
\end{eqnarray*}
where $k= a g( T_1) - T_1 g(1) a \in \calk(\cale_1)$. Similarly we compute
$$a g(T_1' T_{12}) =  (k \otimes g(1)) T_{12} + T_1' T_{12} a g(1).$$
Hence
$$a g([T_1',T_{12}]) -  [T_1',T_{12}] g(1) a  \equiv [k \otimes g(1), T_{12}] \equiv 0$$
by \cite[Lemma 10.(1)]{burgiSemimultiKK}. Also one has $[ a,[T_1',T]]\equiv 0$ by this lemma.
A similar computation yields the last claim.
\end{proof}

The following lemma is a standard result for crossed products.

\begin{lemma}   \label{lemmacrossedtensor}
If $D$ is a $C^*$-algebra with trivial $G$-action then
$(A \otimes_{\rm max} D) \rtimes G \cong (A \rtimes G) \otimes_{\rm max} D$
(also for the strong crossed product)
and
$(A \otimes_{\rm min} D) \rtimes_r G \cong (A \rtimes_r G) \otimes_{\rm min} D$
canonically.
\end{lemma}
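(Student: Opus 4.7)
The plan is to prove both isomorphisms canonically by identifying covariant representations on either side.

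For the full crossed product, the idea is to invoke Lemma \ref{lemmaBijectionCovRepF}: representations of $(A \otimes_{\max} D) \rtimes G$ correspond to $G$-covariant representations of $A \otimes_{\max} D$, and representations of $(A \rtimes G) \otimes_{\max} D$ correspond to pairs $(\rho, \pi_D)$ consisting of a representation $\rho$ of $A \rtimes G$ and a commuting representation $\pi_D$ of $D$. Since the $G$-action on $D$ is trivial, applying the covariance identities (\ref{equivariantRep1})--(\ref{equivariantRep4}) to $\pi(1 \otimes d)$ (using an approximate unit of $A$ to make sense of this inside $\mathcal{M}(A \otimes_{\max} D)$) shows that $\pi_D := \pi|_D$ commutes with the source and range projections $U_g^*U_g$ and $U_gU_g^*$, and from $U_g \pi_D(d) U_g^* = \pi_D(d) U_g U_g^*$ together with $U_g = U_g U_g^* U_g$ one deduces $U_g \pi_D(d) = \pi_D(d) U_g$ (and similarly for $U_g^*$). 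Thus $\pi$ decomposes as a pair $(\pi_A, \pi_D)$ with $\pi_A$ covariant for $(U_g)$ and $\pi_D$ commuting with both $\pi_A$ and $U$, which is exactly the data of a representation of $(A \rtimes G) \otimes_{\max} D$. The converse direction is immediate from the universal property of $\otimes_{\max}$. Mutual inversity of the correspondence yields the isomorphism on the algebraic level, and passing to the universal norm on both sides gives the $C^*$-isomorphism. The strong-covariance condition $u_gu_h = 0$ for incomposable $g,h$ depends only on $u$, hence is preserved under the correspondence and gives the statement for $\rtimes_s$.

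For the reduced crossed product, fix faithful non-degenerate representations $\sigma_A: A \to B(H_A)$ and $\sigma_D: D \to B(H_D)$, so that $\sigma := \sigma_A \otimes \sigma_D$ is faithful and non-degenerate on $A \otimes_{\min} D$ on $H_A \otimes H_D$. The left regular covariant representation (Definition \ref{definitionLeftRegular}) realizes $(A \otimes_{\min} D) \rtimes_r G$ on $(H_A \otimes H_D) \otimes \ell^2(G)$, and by Corollary \ref{corollaryLeftRegReprIndepenceRepPi} this is independent of the chosen $\sigma$. Under the canonical unitary
\[
(H_A \otimes H_D) \otimes \ell^2(G) \;\longrightarrow\; (H_A \otimes \ell^2(G)) \otimes H_D,
\]
and using that $G$ acts trivially on $D$ (so $h^*(a \otimes d) = h^*(a) \otimes d$), the representation $\pi(a \otimes d)$ and the unitary-like operator $u(g) = 1 \otimes 1 \otimes \lambda_g$ rearrange to $\pi_{A,r}(a) \otimes \sigma_D(d)$ and $(1_{H_A} \otimes \lambda_g) \otimes 1_{H_D}$, respectively, where $\pi_{A,r}$ denotes the left regular representation defining $A \rtimes_r G$. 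Hence the image algebra coincides with the spatial minimal tensor product $(A \rtimes_r G) \otimes_{\min} D$ acting on $(H_A \otimes \ell^2(G)) \otimes H_D$.

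The main obstacle is the decomposition step in the full case: one must verify that covariance with respect to the trivial $G$-action on $D$ really forces $\pi_D(D)$ to commute with all of $u(G)$ (and not merely with the source and range projections). This is where the partial-isometry relation $U_g = U_gU_g^*U_g$ is used to upgrade commutation with $U_gU_g^*$ to commutation with $U_g$ itself. Once this is in place, both correspondences are standard applications of universal properties, and the rest is bookkeeping.
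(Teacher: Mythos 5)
The paper offers no proof of this lemma at all --- it is simply labelled ``a standard result for crossed products'' --- so there is nothing to compare against; your argument supplies the standard proof and it is essentially correct. Both halves are sound: in the full case the bijection of Lemma \ref{lemmaBijectionCovRepF} together with the universal property of $\otimes_{\rm max}$ reduces everything to showing that covariance forces $\pi_D(D)$ to commute with $u(G)$, and your upgrade from $U_g\pi_D(d)U_g^*=\pi_D(d)U_gU_g^*$ and $[U_g^*U_g,\pi_D(d)]=0$ to full commutation via $U_g=U_gU_g^*U_g$ is exactly the right step; in the reduced case the spatial rearrangement under Corollary \ref{corollaryLeftRegReprIndepenceRepPi} does the job. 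One small point worth recording: when you pass to $\pi(1\otimes d)$ via an approximate unit $(e_i)$ of $A$, the net $g(e_i)$ converges strictly to the range projection $gg^*$ rather than to $1$, so the limit of $\pi(g(e_i)\otimes d)U_gU_g^*$ is a priori $\pi(gg^*\otimes d)U_gU_g^*$; this is harmless because $\pi(gg^*(a)\otimes d)U_gU_g^*=\pi(a\otimes d)U_gU_g^*$ (a consequence of the covariance identities), but the cancellation deserves a line. You should also state explicitly that you first split off the degenerate part of the representation, as is done elsewhere in the paper, so that the multiplier extensions $\pi_A$ and $\pi_D$ are defined.
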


\begin{theorem}  \label{theoremMain}
Let $A$ and $B$ be $H \times G$-Hilbert $C^*$-algebras and $l \in \{\emptyset,s,r,i\}$.
Assume that $G$ is unital.
For all appearing $G \times H$-actions on Hilbert modules and $C^*$-algebras
we require that the induced $H^*$-actions and $G^*$-actions commute.
If $l=r$ then we assume that $G$ is non-degenerate and associative and has left cancellation,
all $G$-Hilbert modules and $G$-Hilbert $C^*$-algebras have transferred left cancellation,
and $B= \C$ with the trivial $G$-action.
Then there exists a descent homomorphism
\begin{eqnarray*}
&& j^G_l : KK^{H \times G} (A,B) \longrightarrow KK^H (A \rtimes_l G, B \rtimes_l G)
\end{eqnarray*}
given by
$$j^G_l(\cale,T) = (\cale \rtimes_l G, T \otimes 1)$$
for all $(\cale,T) \in \E^{H \times G} (A,B)$.
Moreover, the following two points hold true:

(a) If $x_1 \in KK^{H \times G}(A,B_1)$, $x_2 \in KK^{H \times G}(B_1,B_2)$
and the intersection product $x_1 \otimes_{B_1} x_2$ exists
then
$$j^G_l(x_1 \otimes_{B_1} x_2) = j^G_l(x_1) \otimes_{B_1 \rtimes_l G} j^G_l(x_2).$$

(b) If $A=B$ is $\sigma$-unital then $j^G_l(1_A)= 1_{A \rtimes_l G}$.
\end{theorem}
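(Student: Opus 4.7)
The plan is to follow Kasparov's strategy for the descent homomorphism in \cite{kasparov1988} and verify each step in the present semimultiplicative setting. First, I would show that for $(\cale,T) \in \E^{H \times G}(A,B)$ the pair $(\cale \rtimes_l G, T \otimes 1)$ is a cycle in $\E^H(A \rtimes_l G, B \rtimes_l G)$. Propositions \ref{lemmaTheta} and \ref{lemmaThetaReduced} already supply the $H$-Hilbert bimodule structure via the representation $\Theta_l$ and the diagonal $H$-action inherited from $U \otimes V$. The Fredholm conditions on $T \otimes 1$ reduce, by density and linearity, to checking
$\Theta_l(a_g g)\big((T \otimes 1)^2 - 1\big)$, $[\Theta_l(a_g g), T \otimes 1]$ and $\Theta_l(a_g g)\big(T \otimes 1 - (T \otimes 1)^*\big)$ are in $\calk(\cale \rtimes_l G)$ on generators $a_g \in A_g, g \in G^*$. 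Writing $\Theta_l(a_g g) = (a_g \otimes 1)(U_g \otimes V_g)$ and using the cycle conditions (\ref{defCycleCondition}) for $(\cale,T)$, together with the facts $g(T) - g(1)T,\ [g(1),T] \in I_A(\cale)$, one transfers these to the desired compactness, the bounded factor $V_g$ being harmless. The $H$-equivariance conditions
$h(T \otimes 1) - h(1)(T \otimes 1),\ [h(1), T \otimes 1],\ [h^*(1), T \otimes 1] \in I_{A \rtimes_l G}(\cale \rtimes_l G)$
follow from $h(T \otimes 1) = U_h T U_h^* \otimes V_h V_h^*$, the corresponding conditions for $(\cale,T)$, and the standing requirement that $H$- and $G$-actions commute.

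Next I would establish that $j^G_l$ descends to a well-defined homomorphism $KK^{H \times G}(A,B) \longrightarrow KK^H(A \rtimes_l G, B \rtimes_l G)$. Additivity is a consequence of the natural isomorphism $(\cale_1 \oplus \cale_2) \rtimes_l G \cong (\cale_1 \rtimes_l G) \oplus (\cale_2 \rtimes_l G)$. Homotopy invariance follows from Lemma \ref{lemmacrossedtensor}: a cycle in $\E^{H \times G}(A, B[0,1])$ is sent to a cycle in $\E^H(A \rtimes_l G, B[0,1] \rtimes_l G)$, and $B[0,1] \rtimes_l G \cong (B \rtimes_l G)[0,1]$ when $C[0,1]$ is equipped with the trivial $H \times G$-action.

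For part (a), the key step is to use the isomorphism of Lemma \ref{lemmaIsomorphE12} to identify $\cale_{12} \rtimes_l G$ with $(\cale_1 \rtimes_l G) \otimes_{B_1 \rtimes_l G} (\cale_2 \rtimes_l G)$. Under this identification, one must check that $T_{12} \otimes 1$ is a Kasparov product of $T_1 \otimes 1$ and $T_2 \otimes 1$ in the $H$-equivariant sense. Two conditions must be verified: firstly, $T_{12} \otimes 1$ is a $(T_2 \otimes 1)$-connection for $\cale_1 \rtimes_l G$, which reduces to the connection property of $T_{12}$ for $T_2$ via the description of creation operators $T_y: \cale_2 \rtimes_l G \longrightarrow \cale_{12} \rtimes_l G$ induced from $y \in \cale_1 \rtimes_l G$; secondly, the positivity condition
\begin{equation*}
\Theta_l(a)\, [T_1 \otimes 1 \otimes 1,\ T_{12} \otimes 1]\, \Theta_l(a)^* \ \geq\ 0 \pmod{\calk(\cale_{12} \rtimes_l G)}
\end{equation*}
for all $a \in A \rtimes_l G$. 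For the second condition I would expand $\Theta_l(a_g g) = (a_g \otimes 1)(U_g \otimes V_g)$ on generators and invoke Lemma \ref{lemmaKasparovRemark210}: the element $R := [T_1 \otimes 1, T_{12}]$ lies in $Q_A(\cale_{12})$, is positive modulo $I_A(\cale_{12})$, and satisfies $g(R) - g(1)R,\ [g(1), R] \in I_A(\cale_{12})$ for all $g \in G'$. These $G$-equivariance properties are precisely what is needed to ensure that the twisting by $U_g \otimes V_g$ preserves positivity modulo compacts after passing to the crossed product.

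Part (b) is immediate: for $\sigma$-unital $A = B$, the unit $1_A$ is represented by the cycle $(A, 0)$ with $A$ acting on itself by left multiplication; under $j^G_l$ this is sent to $(A \rtimes_l G, 0)$ viewed as an $H$-Hilbert $(A \rtimes_l G, A \rtimes_l G)$-bimodule with $\Theta_l$ specializing to left multiplication, which is precisely $1_{A \rtimes_l G}$. The hard part will be part (a), specifically the positivity condition for the Kasparov product after descent: the subtle interaction between the partial-isometry structure $U_g \otimes V_g$ and the operator $R$ from Lemma \ref{lemmaKasparovRemark210} must be handled uniformly across the four crossed product variants $l \in \{\emptyset, s, r, i\}$. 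In the reduced case $l = r$ there is the additional subtlety that one must work with the explicit model from Proposition \ref{lemmaThetaReduced} and use non-degenerateness of $G$ together with the intertwining partial isometry $W$ to control the relevant norms.
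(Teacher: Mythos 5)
Your proposal follows essentially the same route as the paper: Propositions \ref{lemmaTheta}/\ref{lemmaThetaReduced} supply the bimodule structure, Lemma \ref{lemmacrossedtensor} gives homotopy invariance, Lemma \ref{lemmaIsomorphE12} identifies $\cale_{12}\rtimes_l G$, and Lemma \ref{lemmaKasparovRemark210} drives the positivity check for the Kasparov product, with (b) handled exactly as you say. The only step your sketch leaves implicit is the compactness transfer $Z_{g}\big(\calk(\cale)\otimes 1\big)\subseteq\calk(\cale\rtimes_l G)$ for the diagonal partial-isometric action $Z=U\otimes V$ --- the paper's explicit computation with the operators $\theta_{\xi,\eta}$ and an approximate unit of $B$, culminating in (\ref{equZcompact}) --- which is what actually makes ``the bounded factor $V_g$ harmless''; this is the standard Kasparov-type computation rather than a gap, but it is the technical heart of showing the commutators land in $\calk(\cale\rtimes_l G)$.
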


\begin{proof}
In our proof we essentially follow Kasparov \cite{kasparov1988}.
We define
compact operators
$\theta_{\xi,\eta} \in \calk(\calf)$ by $\theta_{\xi,\eta}(x) = \xi
\langle \eta,x\rangle$, where $\xi,\eta,x \in \calf$ and $\calf$ is any Hilbert module. 
Write $Z$ for the diagonal $G$-Hilbert action $U \otimes V$ on $\cale \otimes_B (B \rtimes_l^{\rm Mod} G)$.
Let $\phi_l = \phi_{B,G,l}$.
Let $(a_i)$ be an approximate unit in $B$.
Let $E \in \cale$ and $F \in B \rtimes_l G$. Let $x,y \in G^*$.
Then one has (in $\cale \otimes_B (B \rtimes_l^{\rm Mod} G)$)
\begin{eqnarray*}
&&\theta_{U_{x y^*} (\xi) \otimes \phi_l(xy^{*}(a_i) x )\, ,\, \, \eta \otimes \phi_l (y y^{*}(a_i) y )} (E \otimes F)\\
&=& U_{x y^*} (\xi) \otimes \phi_l \big(xy^{*}(a_i) x \big) \,\,\big \langle \eta \otimes \phi_l \big(y y^{*}(a_i) y \big ), E \otimes F \big \rangle\\
&=& U_{x y^*} (\xi) \otimes \phi_l \big ( xy^{*}(a_i) x \big) \,\,{\phi_l \big(y y^{*}(a_i) y \big)}^* \,\,\phi_l \big(\langle \eta,E\rangle \big)\, F\\
&=& U_{x y^*} (\xi) \otimes \phi_l \big( xy^{*}(a_i) x  \,\, y^* y y^{*}(a_i^*) y^* \,\,\langle \eta,E\rangle \big) F\\
&=& U_{x y^*} (\xi) \otimes \phi_l \big( xy^{*}(a_i)\,\, x y^{*}(a_i^*) \,\, x y^*(\langle \eta,E\rangle) \big)  \, \phi_l(x y^*) F\\
&=& U_{x y^*} \big(\xi \, a_i\,\, a_i^* \, \langle \eta,E\rangle \big)  \otimes \phi_l(x y^*) F\\
&=& U_{x y^*} \otimes V_{x y^*} \,\,\, \big (\theta_{\xi a_i a_i^*,\eta} \otimes 1\,\,\, \big(E \otimes F \big) \big ).
\end{eqnarray*}
Omitting here $E \otimes F$ and then taking the limit $i \rightarrow \infty$ yields
$$Z_{x y^*} \big(\calk(\cale) \otimes 1 \big) \subseteq \calk \big(\cale \otimes_B(B \rtimes_l^{\rm Mod} G) \big).$$
For $x \in G'$ we have $Z_x= Z_x Z_x^* Z_x$, and since $Z_x (\calk) \subseteq \calk$, we obtain
\begin{equation}   \label{equZcompact}
Z_{x} \big(\calk(\cale) \otimes 1 \big) \subseteq \calk \big(\cale \otimes_B(B \rtimes_l^{\rm Mod} G) \big).
\end{equation}

Let $\Theta$ be the action of $A \rtimes_l G$ on $\cale \rtimes_l G$, see (\ref{ThetaAction}).
By (\ref{equZcompact})
it is straight forward to compute that
$$[\Theta\big(\phi_{l}(a_g g) \big) , T \otimes 1]
\in \calk(\cale \rtimes_l G)$$
for all $g \in G'$, where $\phi_l$ denotes $\phi_{A,G,l}$ (use $a U_g = U_g U_g^* a U_g = U_g g(a)$). This result extends by induction to all $g$ in $G^{*}$ by using products:
write
$\Theta \big(\phi_l(a g h) \big)$ as
$$\Theta \big(\phi_l(a g h) \big) = \Theta \big (\phi_l(a^{1/2} g) \big) \Theta \big(\phi_l(g^{*}(a^{1/2}) h) \big)$$
for $g \in G^{*},h \in G'$ and positive $a \in A_{gh}$ by (\ref{ginfa}) and Lemma \ref{lemmaHilbertCstarGstar} (iii).
By similar computations one easily checks all other requirements showing that $(\cale \rtimes_l G, T \otimes 1)$
is a cycle.

The map $j^G$ is well defined, as a homotopy $(\calf,S) \in \E^{H \times G} \big(A,B[0,1] \big)$
gives a homotopy $j^G(\calf,S) \in \E^G \big(A \rtimes_l G, B[0,1] \rtimes_l G \big)$, as
\begin{eqnarray*}
B[0,1] \rtimes_l G &\cong& \big( B \rtimes_l G \big)  \otimes C[0,1],\\
\calf \otimes_{B[0,1]} \big(B[0,1] \rtimes_l G \big) \otimes_{B[0,1] \rtimes_l G} \big (B \rtimes_l G \big)  &\cong&
\calf_t \otimes_B \big ( B \rtimes_l G \big )
\end{eqnarray*}
for $0 \le t \le 1$, where the first isomorphism is by Lemma \ref{lemmacrossedtensor}
and the second isomorphism follows from Lemma \ref{lemmaTensorSkewwithInterior}.

To prove (a), let $x_1 = (\cale_1,T_1)$, $x_2 =(\cale_2,T_2)$,
$\cale_{12} = \cale_1 \otimes_{B_1} \cale_2$
and $(\cale_{12},T_{12})$ a Kasparov product of $x_1$ and $x_2$.
We have to check that $j^G(\cale_{12}, T_{12})=(\cale_{12} \rtimes_l G, T_{12} \otimes 1)$ is a Kasparov product of
$j^G(x_1)=(\cale_1 \rtimes_l G, T_1 \otimes 1)$
and $j^G(x_2)= (\cale_2 \rtimes_l G, T_2 \otimes 1)$.
For the definition of a {Kasparov
product} $(\cale_{12}, T_{12})$ of $(\cale_1,T_1)$ and
$(\cale_2,T_2)$ we shall use \cite[Definition 19]{burgiSemimultiKK} (cf. \cite{skandalis}).
It states that $\cale_{12}=\cale_1 \otimes_{B_1} \cale_2$, $T_1 \otimes 1$ is a $T_2$-connection on $\cale_{12}$,
and $a [T_1 \otimes 1, T_{12}] a^* \ge 0$ in the quotient $\call(\cale_{12})/\calk(\cale_{12})$ for all $a \in A$.
For the definition of a {$T_2$-connection
on $\cale_{12}$} see \cite{skandalis}, or \cite[Definition
2.6]{kasparov1988}, or \cite[Definition 18]{burgiSemimultiKK}.

We use the isomorphism given in Lemma \ref{lemmaIsomorphE12}.
For the $H$-equivariant $*$-homomorphism
\begin{equation}  \label{injectHomCrossUnitG}
f:B_2 \longrightarrow B_2 \rtimes_l G, \quad f(b) = b 1_G,
\end{equation}
$j^G(\cale_{12}, T_{12})= f_*((\cale_{12}, T_{12}))$ is a cycle in $\E^H(A \rtimes_l G, B \rtimes_l G)$
by \cite[Definition 24]{burgiSemimultiKK}.

The $G$-action on $\cale_{12}$ will be denoted by $U$.
The inclusion
\begin{eqnarray*} \label{equInclusKomp}
\calk(\cale_2, \cale_{1} \otimes_{B_1} \cale_2) \otimes 1_{B_2 \rtimes_l G}  &\subseteq &
\calk \big(\cale_2 \otimes_{B_2} (B_2 \rtimes_l G), \,\cale_{1} \otimes_{B_1} \cale_2  \otimes_{B_2} (B_2 \rtimes_l G) \big),
\end{eqnarray*}
where $B_2$ acts by $f$,
is similarly proved as \cite[Lemma 15]{burgiSemimultiKK}.

We use it to check
\begin{eqnarray*}
\theta_\eta (T_2^t \otimes 1) - (-1)^{\partial \eta \cdot \partial T_2} (T_{12}^t \otimes 1) \theta_\eta
&\in&  \calk(\cale_2 \rtimes_l G, \cale_{12} \rtimes_l G)
\end{eqnarray*}
for $\eta \in \cale_1, t \in \{1,*\}$ and
$$\theta_\eta (\xi \otimes z) = \eta \otimes \xi \otimes z$$
for
$\xi \in \cale_2, z \in B_2 \rtimes_l G$.
This shows that
$T_{12} \otimes 1$ is a $T_2 \otimes 1$-connection on $\cale_{12} \rtimes_l G$.

By \cite[Lemma 15]{burgiSemimultiKK} and the homomorphism $f$ we have
\begin{eqnarray} \label{compactsinput}
\calk(\cale_{12}) \otimes 1 &\subseteq& \calk (\cale_{12} \rtimes_l G).
\end{eqnarray}

By Lemma \ref{lemmaKasparovRemark210} we have $R + k \ge 0$ for $R=[T_1 \otimes 1, T_{12}]$ and some $k \in I_A(\cale_{12})$.
Let $a \in A$ (actually $\pi(A) \otimes 1$!), $g \in G'$, and note that $a U_g  = U_g U_g^* a U_g = U_g g^*(a)$
for $a \in A$ and $g \in G'$.
Using inclusion (\ref{compactsinput}),
Lemma \ref{lemmaKasparovRemark210},
and the fact that $U_g \otimes V_g$ is in $\call(\cale_{12} \rtimes_l G)$,
we have
the next computation in $\cale_{12} \rtimes_l G = \cale_{12} \otimes_{B_2} (B \rtimes_l^{\rm Mod} G)$
modulo $\calk (\cale_{12} \rtimes_l G)$ for $g \in G'$.
\begin{eqnarray*}
&& a (U_g  \otimes V_g) (R \otimes 1) = U_g g^*(a)  U_g^* U_g  R\otimes V_g \equiv a U_g R U_g^* U_g \otimes V_g\\
& \equiv&  a R U_g \otimes V_g = a (R \otimes 1) (U_g \otimes V_g).
\end{eqnarray*}
By induction on the length of a word in $G^*$ we see that this identity holds true also for all $g \in G^*$.

Let $a = \sum_g a_g g \in C_c(G,A)$. Let $\phi_l = \phi_{A,G,l}$.
By the last computation we have the following computation
in the quotient $\call(\cale_{12} \rtimes_l G)/\calk (\cale_{12} \rtimes_l G)$, where $\underline R := R + k \ge 0$.
\begin{eqnarray*}
&&(\Theta \otimes 1)(\phi_l(a))\, (R \otimes 1)\,  (\Theta \otimes 1 )(\phi_l(a))^* \\
&=& \Big[ \Theta \otimes 1\, \Big(\phi_l \Big(\sum_{g \in G^*} a_g g \Big) \Big) \Big](R \otimes 1) \Big[\Theta \otimes 1 \,\Big (\phi_l \Big(\sum_{h \in G^*} a_h h \Big ) \Big) \Big]^*\\
&=& \sum_{g,h \in G^*} a_g U_g R U_h^* a_h^* \otimes V_g V_h^*
\,=\, \sum_{g,h \in G^*} U_g g^*(a_g) \underline R U_h^* a_h^* \otimes V_g V_h^*\\
&=& \sum_{g,h \in G^*} a_g \underline R^{1/2} U_g U_h^*  \underline R^{1/2} a_h^* \otimes V_g V_h^* \,\ge \, 0.
\end{eqnarray*}
Note that
\begin{eqnarray*}
R \otimes 1  &=&  [T_1 \otimes 1 \otimes 1, T_{12} \otimes 1].
\end{eqnarray*}
This shows that $(\cale_{12} \rtimes_l G, T_{12} \otimes 1)$ is a Kasparov product.
We have thus checked point (a).

Point (b) follows from $j_l^G(A,0) = (A \otimes_A (A \rtimes_l G),0) = (A \rtimes_l G,0)$ by using a map like in (\ref{injectHomCrossUnitG}).
\end{proof}

\bibliographystyle{plain}
\bibliography{references}

\end{document}